\numberwithin{equation}{section}
\newtheorem{prop}{Proposition}
\newtheorem{lemma}[prop]{Lemma}
\newtheorem{thm}[prop]{Theorem}
\newtheorem{cor}[prop]{Corollary}
\numberwithin{prop}{section}
\theoremstyle{definition}
\newtheorem{defn}[prop]{Definition}
\newtheorem{rmk}[prop]{Remark}
\DeclareSymbolFont{script}{U}{eus}{m}{n}
\DeclareSymbolFontAlphabet{\amathscr}{script}
\DeclareMathSymbol{\Wedge}{0}{script}{"5E}
\DeclareMathAlphabet{\mathrmsl}{OT1}{cmr}{m}{sl}
\DeclareMathAlphabet\mathbfcal{OMS}{cmsy}{b}{n}
\newcommand{\Ome}{\alpha}
\newcommand{\A}{\beta}
\newcommand{\al}{\theta}
\newcommand{\tstM}{{\mathscr M}}
\newcommand{\tstA}{{\mathscr A}}
\newcommand{\tstB}{{\mathscr B}}
\newcommand{\tstK}{{\mathscr K}}
\newcommand{\Pol}{{\mathrm P}}
\newcommand{\T}{{\mathbb T}}
\newcommand{\tor}{{\mathfrak t}}
\newcommand{\DD}{\mathcal D}
\newcommand{\VV}{\mathcal V}
\renewcommand{\bar}[1]{\overline{#1}}
\newcommand{\Scal}{R}
\newcommand{\C}{{\mathbb C}}
\newcommand{\R}{{\mathbb R}}
\newcommand{\Z}{{\mathbb Z}}
\newcommand{\PP}{\mathbb P}
\newcommand{\cO}{\mathcal O}
\newcommand{\Aut}{\mathrm{Aut}}
\newcommand{\ap}{{\mathrm x}}
\newcommand{\bp}{{\mathrm y}}
\DeclareMathOperator{\Vol}{Vol}
\begin{document}

\title[Quasi-regular BHE 3-folds]{Pluriclosed 3-folds with vanishing Bismut Ricci form: General theory in the quasi-regular case}

\author{Vestislav Apostolov}
\address{V.\,Apostolov\\ D{\'e}partement de Math{\'e}matiques\\ UQAM \\
 and \\ Institute of Mathematics and Informatics\\ Bulgarian Academy of Sciences}
\email{\href{mailto:apostolov.vestislav@uqam.ca}{apostolov.vestislav@uqam.ca}}

\author{Abdellah Lahdili}
 \address{Abdellah Lahdili \\
          Campus Saint-Jean\\ University of Alberta\\ 8406\\ 91 Street\\ Edmonton (AB)\\ Canada T6C 4G9 }
 \email{\href{mailto:lahdili@ualberta.ca}{lahdili@ualberta.ca}}
 
\author{Kuan-Hui Lee}
\address{K-H. Lee\\
          Department of Mathematics and Statistic, McGill University \\
          and \\ CIRGET, UQAM}
 \email{\href{mailto:kuan-hui.lee@mcgill.ca}{kuan-hui.lee@mcgill.ca}}

 \thanks{V.A. was supported in part by  an NSERC Discovery Grant. K.-H. L. and A.L. were supported by  CIRGET postodoctoral fellowships and FRQ Team Grants. V.A. is  grateful to D. Martelli and M. K. Crisafio for an illuminating discussion which led to the examples in Theorem~\ref{thm:main3}, as well as for their consequent comments on the manuscript and clarifications. The authors thank R. Dervan, M. Garcia Fernandez, G. Grantcharov, E. Legendre, F. Rochon and J. Streets for sharing with them their expertise and for useful remarks on the manuscript.}

\keywords{Complex manifolds, Special Hermitian metrics, K\"ahler geometry}

\begin{abstract}  We study compact complex $3$-dimensional non-K\"ahler Bismut Ricci flat pluriclosed Hermitian manifolds (BHE) via their dimensional reduction to a special K\"ahler geometry in complex dimension $2$, recently obtained in \cite{ABLS}. We show that in the quasi-regular case, the reduced geometry satisfies a 6th order non-linear PDE which has infinite dimensional momentum map interpretation,  similar to the much studied K\"ahler metrics of constant scalar curvature (cscK). We use this to associate to the reduced manifold or orbifold  Mabuchi and Calabi functionals, as well as to obtain obstructions for the existence of solutions in terms of the automorphism group, paralleling results by Futaki and Calabi--Lichnerowicz--Matsushima in the cscK case. This is used to characterize the Samelson locally homogeneous BHE geometries  in complex dimension 3  as the only non-K\"ahler BHE $3$-folds with $2$-dimensional Bott--Chern $(1,1)$-cohomology group,  for which the reduced space is a smooth K\"ahler surface. We also discuss explicit solutions  of the PDE on orthotoric K\"ahler orbifold surfaces,  motivated by  examples found by Couzens--Gauntlett--Martelli--Sparks~\cite{CGMS} in the framework of  supersymmetric ${\rm AdS}_3 \times Y_7$  type IIB supergravity. Our construction yields infinitely many non-K\"ahler BHE structures on $S^3\times S^3$  and $S^1\times S^2 \times S^3$,  which are not locally isometric to a Samelson geometry. These appear to be  the first such examples. 
\end{abstract}

\maketitle

\section{Introduction}
Non-K\"ahler,  Calabi--Yau type Hermitian metrics are subject of intense recent interest in mathematical physics and complex geometry (see e.g.  \cite{fernandez2014non, finosurvey, JFS, garciafern2018canonical,ivanov2010heterotic,phong2019geometric,picard2024strominger,st-geom, tosatti2015non}).  Given a Hermitian manifold $(N, J_N, g_N)$ of complex dimension $n$, we say that the metric is \emph{pluriclosed} if its K\"ahler form $\omega_N$ satisfies  $\sqrt{-1} \partial \bar \partial \omega_N=0$; this condition is also known in the physics literature  as \emph{strong K\"ahler with torsion} (SKT). Associated to $(g_N, J_N)$, there is a unique Hermitian connection with skew-symmetric torsion, which we shall refer to as the \emph{Bismut connection} of $(g_N, J_N)$ (cf. \cite{Bismut, StromingerSST}). The Bismut connection defines a closed $2$-form representing the first Chern class of $(N, J_N)$, the \emph{Bismut Ricci form} of $(g_N,J_N)$. Pluriclosed Hermitian metric with zero Bismut Ricci form, called \emph{Bismut--Hermitian--Einstein} (BHE) in \cite{JFS, GRFBook,ABLS}, are thus natural extensions of K\"ahler metrics with zero Ricci curvature to  complex manifolds which do not support K\"ahler metrics. They solve the  Hermite--Einstein equations on a certain holomorphic vector bundle associated to $(N, J_N)$~\cite{JFS} and arise  as fixed points of the pluriclosed flow~\cite{PCF,PCFReg}.  Thus BHE structures can be thought of as natural candidates for `canonical metrics' in complex non-K\"ahler geometry. 
Non-K\"ahler BHE geometry in $3$-complex dimensions is also related to solutions of the supersymmetric ${\rm AdS}_3 \times Y_7$  and ${\rm AdS}_2 \times Y_9$ of type IIB  and type D=11 supergravity~\cite{GK,CGMS}.

Naively, one may expect that upon allowing for non-K\"ahler geometries one should obtain a vast array of examples beyond the Calabi--Yau K\"ahler manifolds. There are hints that this might not necessarily be the case: a result of Gauduchon--Ivanov~\cite{GauduchonIvanov}  states that for complex surfaces, the only non-K\"ahler BHE manifolds are,  in fact, finite quotients of the standard Bismut-flat
Hopf surface $S^1\times S^3$. The latter is the complex $2$-dimensional manifestation of a general construction of Hermitian structures with zero Bismut curvature on any even dimensional Lie group of compact type: the metric $g_N$ is then bi-invariant whereas the complex structure $J_N$ is a left (or right) invariant Samelson's complex structure.  We shall refer to these examples as  \emph{Samelson}'s BHE examples. Remarkably, all known to us compact BHE examples discussed in the literature are in fact covered by products of Samelson's BHE manifolds with  Ricci-flat K\"ahler  manifolds (see~\cite{brienza2024cyt,ZhengBismutflat}).

\bigskip
In complex dimension $3$, Samelson's Bismut--Hermitian--Einstein metrics arise as quotients of the following two  geometries:
\begin{equation}\label{Samelson-3D} SU(2)\times SU(2) \qquad \text{and} \qquad  SU(2)\times \R^3.\end{equation}
In view of the complex $2$-dimensional Gauduchon-Ivanov's rigidity result  \cite{GauduchonIvanov} mentioned above, one might ask if there are any other examples in complex dimension $3$?
In \cite{ABLS}, the following partial result was  established:
\begin{thm}\label{thm:reduction} \cite{ABLS} Let $(N, J_N, g_N, \omega_N)$ be a compact complex 3-dimensional non-K\"ahler Hermitian manifold  such that the K\"ahler form $\omega_N$ is pluriclosed and the Ricci form of the Bismut connection is identically zero. Then  the  Bott--Chern cohomology group $H^{1,1}_{\rm BC}(N,J_N)$ is at least $2$- dimensional, $(J_N, g_N,  \omega_N)$  admits two commuting Killing fields of constant length,  $V, J_NV$, giving rise to a real two-dimensional foliation  ${\mathcal V} \subset TN$  and  a transversal K\"ahler structure  $(g_K^T, J^T, \omega_K^T)$,  such that   on any local quotient by leaves of $\mathcal{V}$
\begin{equation}\label{PDE}
 \frac{1}{2}dd^c \Scal(\omega_K^T)\wedge \omega_K^T= \al^T \wedge \al^T + \rho(\omega_K^T) \wedge \rho(\omega_K^T), \qquad \Scal(\omega_K^T)>0,
\end{equation}
where $\Scal(\omega_K^T)$ and $\rho(\omega_K^T)$ stand, respectively, for the scalar curvature and Ricci form of the transversal K\"ahler structure, and $\al^T$ is a $\omega_K^T$-primitive closed $(1,1)$-form.  Furthermore, $(N, g_N, J_N)$ is isometrically covered by a Samelson geometry \eqref{Samelson-3D} if and only if 
 $H^{1,1}_{\rm BC}(N, J_N)$ is 2-dimensional and $\Scal(g_K^T)=const$.
\end{thm}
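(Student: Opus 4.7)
The theorem has three interlocking parts, which I would address in order: (i) produce from the BHE equations two commuting holomorphic Killing fields $V, J_N V$ of constant length, together with the cohomological bound $\dim H^{1,1}_{\rm BC}(N,J_N) \geq 2$; (ii) perform the dimensional reduction along $\mathcal{V} = \spn\{V, J_N V\}$ and derive the scalar PDE \eqref{PDE} on local leaf spaces; (iii) establish the rigidity characterization of the Samelson geometries \eqref{Samelson-3D}. Working tools throughout are the Bismut torsion $H = -d^c \omega_N$ (closed by pluriclosedness), the Lee form $\theta$ of $(g_N, J_N)$, and standard comparison identities between the Bismut, Chern and Levi--Civita connections.

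For (i), I would couple $\rho^B \equiv 0$ with $dH = 0$ to produce a Bochner/Weitzenb\"ock identity for $\theta$, and extract from it via maximum-principle/integration arguments on compact $N$ that $\theta$ is Levi--Civita parallel. Then $V := \theta^\sharp$ is a parallel Killing field of constant length, $J_N V$ is automatically Killing and holomorphic (since $V$ is real-holomorphic with $|V|$ constant), and $[V, J_N V] = 0$ by the same constancy. This produces the foliation $\mathcal{V}$, while linear independence of $[\omega_N]_{\rm BC}$ and of the Bott--Chern pullback of the transversal class $[\omega_K^T]$ gives the cohomological bound. This is the main technical obstacle: in complex dimension $3$ the torsion $H$ is not determined by $\theta$ alone (unlike in the dimension-$2$ Gauduchon--Ivanov setting), so one needs a sharp Weitzenb\"ock identity that couples $\rho^B = 0$ and $dH = 0$ in a fine way, and I expect this to absorb the bulk of the technical work of \cite{ABLS}.

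For (ii), let $p : N \to S$ be a local quotient by leaves of $\mathcal{V}$, carrying the transversal K\"ahler structure $(g_K^T, J^T, \omega_K^T)$. Write $\omega_N = p^* \omega_K^T + \tfrac{1}{|V|^2}\,\eta \wedge J_N\eta$, with $\eta$ the $g_N$-dual of $V$; the curvature of the principal $\T^2$-bundle defined by $(\eta, J_N \eta)$ projects to a primitive basic $(1,1)$-form identified with $\al^T$. Projecting $\rho^B = 0$ to $S$ and separating traceful from traceless parts of the transverse $(1,1)$-bundle should yield \eqref{PDE}, the term $\al^T \wedge \al^T$ arising from the $\T^2$-curvature squared and $\rho(\omega_K^T)\wedge \rho(\omega_K^T)$ from the Chern--Ricci contribution of $S$. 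Strict positivity $\Scal(\omega_K^T) > 0$ then corresponds to $|V| \neq 0$, i.e.\ to $N$ being non-K\"ahler.

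For (iii), the forward direction is transparent: on a Samelson quotient $\omega_K^T$ is K\"ahler--Einstein, $\Scal$ is constant, $\al^T \equiv 0$, and $\dim H^{1,1}_{\rm BC}=2$. Conversely, $\Scal(\omega_K^T) = \text{const}$ reduces \eqref{PDE} to the pointwise identity $\al^T \wedge \al^T + \rho(\omega_K^T) \wedge \rho(\omega_K^T) = 0$, which Chern--Weil translates into the cohomological relation $[\al^T]^2 + c_1(S)^2 = 0$. The constraint $\dim H^{1,1}_{\rm BC}(N, J_N) = 2$ tightly controls the pullback map $H^{1,1}(S) \to H^{1,1}_{\rm BC}(N)$, forcing $[\al^T]$ to be proportional to a $\T^2$-curvature class already killing $c_1(S)$, whence $\al^T \equiv 0$ by primitivity. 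A careful K\"ahler--Einstein and dimension count then identifies $S$, together with the $\T^2$-bundle data, with one of the two Samelson reductions of \eqref{Samelson-3D}.
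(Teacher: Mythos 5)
This theorem is not proved in the present paper: it is quoted from \cite{ABLS}, and only the mechanism behind it is recalled in Section~2. Measured against that mechanism, your step (i) takes a route that cannot work as stated. The Killing field is not the Lee vector field, and the Lee form is not Levi--Civita parallel: the actual construction views a BHE metric as a stationary soliton of the pluriclosed flow and invokes \cite{SU2} to produce $V = g_N^{-1}(\eta_N + df)$, i.e.\ the Lee form corrected by an exact term coming from the soliton potential; \cite{ABLS} then shows that this $V$ is parallel for the \emph{Bismut} connection (hence of constant norm) and that $J_NV$ is also Killing. Your stronger claim fails already on the model $SU(2)\times SU(2)$: there $V$ is a nonzero left-invariant toral direction, Killing and of constant length, but $\nabla^{\rm LC}_Y V = \tfrac12 [Y,V] \neq 0$ because the Lie algebra has trivial centre, so no Bochner/maximum-principle argument can yield that the Lee form is $\nabla^{\rm LC}$-parallel. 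Likewise, neither $[V,J_NV]=0$ nor the Killing property of $J_NV$ follows from constancy of $|V|$ alone; these require the Bismut-parallelism established in \cite{ABLS}.

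Your step (ii) is also off in a way that changes the PDE: the transversal K\"ahler metric solving \eqref{PDE} is not the horizontal part of $g_N$ but its conformal rescaling by $2/\Scal(\omega_K^T)$; the correct dictionary is \eqref{inverse}, $g_N = \tfrac{\Scal(\omega_K^T)}{2}\, g_K^T + \eta_V^2+\eta_W^2$, with connection curvatures $\bigl(\al^T, -\rho(\omega_K^T)\bigr)$, and the positivity $\Scal(\omega_K^T)>0$ is precisely what makes this rescaling admissible --- it is not a restatement of $|V|\neq 0$. Finally, in (iii) the assertion that $\al^T\equiv 0$ on a Samelson quotient is false for $SU(2)\times SU(2)$: the reduced surface is $\PP^1\times\PP^1$ with $c_1^2(S)=8$, and integrating \eqref{PDE} forces $\A^2=-c_1^2(S)<0$, hence $\al^T\neq 0$; only the $SU(2)\times\R^3$ geometry has $\al^T=0$. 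The converse direction therefore cannot be run by ``killing $c_1(S)$'' through primitivity as you propose; in the regular case the paper obtains the rigidity (Theorem~\ref{thm:main2}) from the surface classification of Lemma~\ref{rough-classification} combined with the Calabi--Lichnerowicz--Matsushima and Futaki obstructions, and the general quasi-regular/irregular statement is part of the content of \cite{ABLS} that your sketch does not reach.
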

As a matter of fact, the results in \cite{ABLS} also provide a canonical way to construct 3-dimensional complex BHE manifolds from  K\"ahler surfaces satisfying \eqref{PDE}: in the case when the space of leaves of $\VV$ is a smooth or an orbifold complex surface $S$, --- a situations which we will refer to in this paper respectively as \emph{regular} or \emph{quasi-regular} Bismut--Hermitian--Einstein structure, --- $N$ is the total space of a principal $2$-dimensional torus bundle over $S$, endowed with a basis $\{V, W\}$ of the Lie algebra of the torus and corresponding connection $1$-forms $\eta_V$  and $\eta_{W}$ whose curvatures are respectively $\al^T$  and $-\rho(\omega_K^T)$. The complex structure $J_N$ on $N$ is then the pull-back of the complex structure $J^T$ of $S$ to the horizontal distribution $\mathcal{H}$ with respect to $\eta=\eta_{V}V +\eta_{W}W$, completed with the action $J_N(V):=W, \, J_N (W):=-V$ on the fundamental vector fields,    whereas the Hermitian metric is $g_N= \pi^*(\frac{\Scal(\omega_K)}{2}g_K^T) + \eta_V^2 + \eta_{W}^2$.  Thus, Theorem~\ref{thm:reduction} gives a powerful tool to construct BHE manifolds from complex $2$-dimensional K\"ahler metrics satisfying \eqref{PDE}. Going back to the complex $3$-dimensional Samelson's geometries \eqref{Samelson-3D}, they correspond to the case when $\VV$ is \emph{regular} and the corresponding smooth K\"ahler surfaces are, respectively,  $\PP^1 \times \PP^1$ endowed with a product of constant scalar curvature metrics on each factor,  or a geometrically ruled elliptic surface $\PP(E)\to \Sigma$ which is the projectivisation of a polystable rank $2$-vector bundle $E$ over an elliptic curve $\Sigma$,  endowed with a locally symmetric K\"ahler metric of constant scalar curvature. 

\bigskip
Thus motivated, the first main purpose of this paper is to lay down a framework for a comprehensive study of the condition \eqref{PDE} on a general K\"ahler orbifold surface $S$,   similar to the existence problem for K\"ahler metrics of  constant scalar curvature (cscK for short) that has been a main focus of activity in K\"ahler geometry during the last 70 years or so.  To this end, notice that after applying the Hodge star  operator,   \eqref{PDE} can be rewritten as the scalar equation
\begin{equation}\label{Box}
\Delta\Scal + \frac{\Scal^2}{2} -\|{\rm Rc}\|^2-\|\al\|^2 =0, \qquad \Scal>0,\end{equation}
where $\Delta, \Scal, {\rm Rc}$ denote respectively the Laplacian, scalar curvature and Ricci tensor of the underlying K\"ahler metric $(g_K^T, \omega_K^T)$ on $S$, $\al=\al^T$ is the closed primitive (and therefore harmonic) $(1,1)$-form, and the norms are the tensorial norms with respect to $g_K^T$. We will more generally study the following geometric problem on a compact K\"ahler manifold or orbifold $(M^{2n}, J)$ in any complex dimension $n\geq 2$: Fix deRham classes $\Ome$ and $\A$ in $H^{1,1}(M, \R)$, such that $2\pi\Ome=[\omega_0]$ is a K\"ahler class  and $\A \cdot \Ome^{n-1}=0$.  We are then looking for a K\"ahler  metric $\omega_{\varphi} = \omega_0 + dd^c \varphi$ in $2\pi \Ome$ such that
\begin{equation}\label{PDEB} \Delta_{\varphi}\Scal_{\varphi} + \frac{\Scal_{\varphi}^2}{2} -\|{\rm Rc}_{\varphi}\|_{\varphi}^2-\|\al_{\varphi}\|_{\varphi}^2 =c_{\Ome, \A}, \qquad \Scal_{\varphi}>0,\end{equation}
where $c_{\Ome, \A}=2n(n-1)\left(\frac{(c_1(M)^2+ \A^2)\cdot \Ome^{n-2})}{\Ome^{n}}\right)$ is a topological constant obtained by integrating \eqref{PDEB} against $\omega_{\varphi}^n$,  
the subscript $\varphi$ denotes the geometric quantities with respect to the K\"ahler structure $(g_{\varphi}, \omega_{\varphi})$ on $M$,  and $\al_{\varphi}$ is the harmonic representative of the deRham class $2\pi \A$ with respect to $g_{\varphi}$.
Notice that \eqref{PDEB} is a 6th order 
PDE formulated in terms of the unknown $\omega_0$-relative K\"ahler potential function $\varphi$ on $M$. Even though \eqref{PDEB} is relevant to the BHE problem described above only when  $n=2$ and $c_{\Ome, \A}=0$, the motivation for studying the more general problem \eqref{PDEB} is threefold.  First, in complex dimensions $3$ and $4$,   letting  $\A=0$ and $c_{\Ome, \A}=0$, \eqref{PDEB} is the equation of motions appearing in the supersymmetric ${\rm AdS}_3 \times Y_7$ and ${\rm AdS}_2 \times  Y_9$ solutions of type ${\rm IIB}$ and ${\rm D = 11}$ supergravity, introduced  in \cite{GK} and further studied in \cite{CGMS}. This PDE is also embedded in the so-called \emph{GK geometry} formulated in any complex dimension $n \geq 3$ in \cite{GK}. Our results here apply to the quasi-regular geometries in that theory. Second, still assuming $\A=0$, \eqref{PDEB} is  an instance  of the PDE associated to  Dervan's \emph{Z-charge} (compare with \cite[Example~2.6]{DH} for $j:= (n-2)$). Finally, on a Fano variety  $(M^{2n}, J)$, taking $\Ome=c_1(M)$ and $\A=0$, solutions to \eqref{PDEB} correspond to the fixed points of the parabolic problem introduced in \cite[Eq (10.1)]{CT} and further studied in \cite{SW}, thus extending the notion of {K\"ahler--Einstein metric}.  Our general results in this paper apply to these cases too.

Loosely stated, our first main result shows that the LHS of \eqref{PDEB} is the momentum map of an  infinite dimensional formal K\"ahler manifold, similar to the interpretation  given by Fujiki~\cite{fujiki} and Donaldson~\cite{donaldson} of the scalar curvature of a K\"ahler metric.
\begin{thm}\label{thm:main1}(see Theorem~\ref{thm:GIT}) The solutions of \eqref{PDEB} correspond to the zeroes of the momentum map of a formal infinite dimensional K\"ahler manifold.
\end{thm}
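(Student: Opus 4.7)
The plan is to adapt the Fujiki--Donaldson momentum-map picture for the scalar curvature of K\"ahler metrics to incorporate both the higher-order term $\Delta_\varphi \Scal_\varphi + \tfrac{1}{2}\Scal_\varphi^2$ and the auxiliary harmonic form $\al_\varphi$ appearing in \eqref{PDEB}. The strategy is modelled on Lahdili's weighted formalism and on Dervan's Z-critical framework (already relevant to \eqref{PDEB} in the case $\A=0$).

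\textbf{Step 1: Fujiki--Donaldson setup.} Fix a compact symplectic orbifold $(M^{2n}, \omega_0)$ with $[\omega_0]=2\pi\Ome$, and let $\tstJ = \tstJ(M,\omega_0)$ denote the space of integrable $\omega_0$-compatible almost complex structures. At each point $J$, the tangent space $T_J \tstJ \subset \Sym^2(T^*M)$ carries the complex structure $\dot J \mapsto J\dot J$, and $\tstJ$ carries the standard Fujiki--Donaldson symplectic form
\begin{equation*}
 \Omega^0_J(\dot J_1, \dot J_2) = \tfrac{1}{4}\int_M \tr(J \dot J_1 \dot J_2)\, \frac{\omega_0^n}{n!}.
\end{equation*}
The Hamiltonian group $G = \mathrm{Ham}(M,\omega_0)$ acts on $\tstJ$ preserving this formal K\"ahler structure, with momentum map $\mu_0(J) = \Scal_J - \bar\Scal$. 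Restrict to the open sublocus $\tstJ^+ := \{J\in\tstJ : \Scal_J > 0\}$, on which \eqref{PDEB} takes place.

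\textbf{Step 2: Modified K\"ahler structure.} Introduce the $G$-invariant functional $\Phi : \tstJ^+ \to \R$,
\begin{equation*}
 \Phi(J) = \int_M \Big( \tfrac{1}{2}\Scal_J^2 - \|\Ric_J\|_J^2 - \|\al_J\|_J^2 \Big)\, \frac{\omega_0^n}{n!},
\end{equation*}
where $\al_J$ is the unique $g_J$-harmonic representative of $2\pi\A$ (automatically $\omega_0$-primitive since $\A \cdot \Ome^{n-1}=0$). Set $\Omega := \Omega^0 + dd^c_{\tstJ}\Phi$, formed using the Fujiki--Donaldson complex structure on $\tstJ$. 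This is a closed $G$-invariant $2$-form on $\tstJ^+$; on this locus the Fujiki--Donaldson term dominates, so $\Omega$ remains formally positive and defines a formal K\"ahler structure compatible with the complex structure on $\tstJ$.

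\textbf{Step 3: The modified momentum map.} By the general principle that modifying a symplectic form by $dd^c \Phi$ modifies its momentum map by the Hamiltonian derivative of $\Phi$, we linearise each of the three summands of $\Phi$ along $\dot J = \LL_{X_h} J$: the variation of $\tfrac12\int\Scal_J^2$ produces, after integration by parts using the standard Lichnerowicz-type formula for $\delta\Scal_J(\LL_{X_h}J)$, the contribution $\int h\,(\Delta_J \Scal_J + \tfrac{1}{2}\Scal_J^2)\,\omega_0^n$; the variation of $\int\|\Ric_J\|_J^2\,\omega_0^n$ yields $-\int h\,\|\Ric_J\|_J^2\,\omega_0^n$ via Chern--Weil and the second Bianchi identity; the variation of $\int \|\al_J\|_J^2\,\omega_0^n$ reduces, by $G$-equivariance of the Hodge projection, to the pointwise term $-\int h \|\al_J\|_J^2 \omega_0^n$. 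Summing and normalising by the topological constant $c_{\Ome,\A}$, the momentum map is exactly
\begin{equation*}
 \mu(J) = \Delta_J \Scal_J + \tfrac{1}{2}\Scal_J^2 - \|\Ric_J\|_J^2 - \|\al_J\|_J^2 - c_{\Ome, \A},
\end{equation*}
so its zeroes on $\tstJ^+$ coincide with the solutions of \eqref{PDEB}.

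\textbf{Main obstacle.} The analytically delicate step is computing the variation $\delta \al_J$. Unlike $\Scal_J$ and $\Ric_J$, which depend on $J$ through its $1$- and $2$-jet respectively, $\al_J$ is defined by the \emph{global} elliptic equation $\Delta_{g_J}^{\rm dR}\al = 0$, so $\delta\al_J(\dot J)$ is non-local in $J$. I would handle this by decomposing $\delta\al_J = dd^c\psi_J(\dot J) + (\text{harmonic})$ and exploiting $G$-equivariance of the Hodge projection together with $d^*\al_J = 0$: the non-local $\psi_J$-contribution drops out upon pairing against a Hamiltonian potential $h$, reducing the non-local expression to the pointwise term $\|\al_J\|_J^2$. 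A secondary technical point is verifying that $\Omega$ is formally K\"ahler (not merely closed), which follows from the existence of a local K\"ahler potential for $\Omega^0$ à la Donaldson, added to $\Phi$.
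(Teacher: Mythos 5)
Your Step 2 contains a fatal error: the functional $\Phi$ you introduce is a \emph{topological constant}, so $dd^c_{\tstJ}\Phi=0$ and your modified form degenerates back to the ordinary Fujiki--Donaldson form, whose momentum map is the scalar curvature rather than the left-hand side of \eqref{PDEB}. Indeed, for a primitive harmonic $\al_J$ one has $\|\al_J\|_J^2\,\omega_0^{[n]}=-\al_J^2\wedge\omega_0^{[n-2]}$, and for the Ricci form $\left(\tfrac{1}{4}\Scal_J^2-\tfrac{1}{2}\|{\rm Rc}_J\|_J^2\right)\omega_0^{[n]}=\rho_J^2\wedge\omega_0^{[n-2]}$ (this is exactly the identity behind \eqref{calS}); since $\omega_0$ is fixed and $\rho_J\in 2\pi c_1$, $\al_J\in 2\pi\A$ are closed forms in fixed deRham classes, integration gives
\begin{equation*}
\int_M\Big(\tfrac{1}{2}\Scal_J^2-\|{\rm Rc}_J\|_J^2-\|\al_J\|_J^2\Big)\,\omega_0^{[n]}=2\int_M\big(\rho_J^2+\al_J^2\big)\wedge\omega_0^{[n-2]}=2(2\pi)^n\big(c_1^2(M)+\A^2\big)\cdot\Ome^{n-2},
\end{equation*}
independent of $J$. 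Your Step 3 computations cannot all be right for the same reason: the three claimed variations must sum to zero, not to $\int_M h\,(\Delta_J\Scal_J+\tfrac12\Scal_J^2-\|{\rm Rc}_J\|^2-\|\al_J\|^2)\,\omega_0^{[n]}$. There is also a bookkeeping inconsistency even granting a nontrivial $\Phi$: the total momentum map would be $\mu_0+(\text{correction})$, and none of your three claimed contributions produces the $-\Scal_J$ needed to cancel the Fujiki--Donaldson term $\mu_0(J)=\Scal_J-\bar\Scal$, which does not appear in \eqref{PDEB}. Finally, your treatment of the $\al_J$-variation is too optimistic: in the correct structure the non-local Green-operator contribution does \emph{not} drop out of the symplectic form (it only localizes at the level of the momentum map itself).

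The paper proceeds quite differently: it writes down an \emph{explicit} $(1,1)$-form $\boldsymbol{\Omega}$ (formula \eqref{omega}) consisting of the Fujiki--Donaldson pairing \emph{weighted by $\Scal_J$}, a term quadratic in $\delta_J\dot J$, and a genuinely non-local term $\langle\mathbb{G}_J(\al_J J\dot J_1)^{\rm skew},(\al_J\dot J_2)^{\rm skew}\rangle_J$; positivity on $\{\Scal_J>0\}$ is then manifest term by term, and the momentum map property, closedness along the relevant distribution, and equivariance are verified by direct variational computations (Lemmas~\ref{l:mentum-property}, \ref{l:symplectic}, \ref{l:equivariant}). A general ``closed form from a functional'' machine in the spirit of your proposal exists in the Dervan--Hallam framework, but, as the paper emphasizes, it does not yield the positivity that is essential for the later applications.
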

In \cite{DH}, the authors gave a general method of constructing a closed $(1,1)$-form on a larger formal complex manifold, such that the zeroes of the  (formal) momentum map correspond to the PDE associated to the \emph{Z-charge}. This yields a similar momentum map interpretation of \eqref{PDEB} in the case $\A=0$. While we do not claim that the symplectic structure we construct here coincides with the closed form of \cite{DH}, we stress that the explicit definition we give allows us to verify that it is actually \emph{positive definite} on the subspace of K\"ahler structures with positive scalar curvature.

Similarly to the cscK case, the above result allows us to define a Mabuchi functional whose critical points are the solutions of \eqref{PDEB} (see Definition~\ref{d:Mabuchi}), a Futaki invariant  which obstructs the existence of solutions of \eqref{PDEB} (see Proposition~\ref{p:Futaki}), and a Calabi functional whose critical points extend \eqref{PDEB} to the Futaki unobstructed notion of \emph{extremal} solutions (see Definition~\ref{d:extremal} and Lemma~\ref{l:Calabi}). We use this familiar setup to obtain a LeBrun--Simanca type openness result for the existence of extremal solutions with respect to the cohomology classes $\Ome$ and $\A$ (see Proposition~\ref{p:LeBrun-Simanca}).  The positivity of the formal symplectic form is crucially used in order to establish the convexity of the Mabuchi functional corresponding to \eqref{PDEB} along smooth K\"ahler geodesics with \emph{positive scalar curvature}. This also allows us to obtain (similarly to the cscK case)  the following Calabi--Futaki--Lichnerowicz--Matsushima type obstruction theorem:
\begin{cor}(see Propositions~\ref{p:CLM-full} and~\ref{p:Futaki})\label{c:obstruction} If \eqref{PDEB} admits a solution $\omega_{\varphi}$, then the group $\Aut(M, J)$ of complex automorphisms of $(M,J)$ is reductive and the connected component of identity of the isometry group of $\omega_{\varphi}$ is a maximal connected compact subgroup of $\Aut(M, J)$. Furthermore, the Futaki invariant of the Lie algebra of $\Aut(M, J)$ associated to \eqref{PDEB} vanishes.
\end{cor}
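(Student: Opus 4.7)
The plan is to follow the classical Calabi--Futaki--Lichnerowicz--Matsushima template for cscK metrics, adapted to the higher-order setting of \eqref{PDEB} via the variational framework established earlier in the paper. Namely, the solution $\omega_{\varphi}$ is simultaneously a zero of the formal momentum map (Theorem~\ref{thm:main1}), a critical point of the Mabuchi functional $\mathcal{M}$, and a point where the Hessian of $\mathcal{M}$ is non-negative along smooth K\"ahler geodesics in the positive-scalar-curvature locus. Both parts of the corollary will be read off from these three properties.

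The Futaki vanishing is immediate. By our definition of the Futaki character (Proposition~\ref{p:Futaki}), for every $X\in \mathfrak{aut}(M,J)$ the value $\Fut(X)$ is given by an integral pairing of a holomorphy potential of $X$ (or a suitable primitive thereof, after the standard reduction to the subalgebra with holomorphy potentials) against the quantity
\[
\Delta_{\varphi}\Scal_{\varphi} + \tfrac{1}{2}\Scal_{\varphi}^2 -\|\Ric_{\varphi}\|_{\varphi}^2-\|\al_{\varphi}\|_{\varphi}^2 - c_{\Ome, \A},
\]
which vanishes identically whenever $\omega_{\varphi}$ solves \eqref{PDEB}. Since $\Fut$ is purely cohomological in $\Ome$, $\A$ and $c_1(M)$ (independent of the representative metric), this forces $\Fut\equiv 0$ on all of $\mathfrak{aut}(M,J)$.

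For the reductivity and the maximality statements, let $\mathfrak{k}$ denote the Lie algebra of $K:=\mathrm{Isom}_0(M,g_{\varphi},J)\cap \Aut_0(M,J)$, viewed as a real subalgebra of $\mathfrak{aut}(M,J)$. Restricting to the subalgebra $\mathfrak{h}\subset \mathfrak{aut}(M,J)$ of real-holomorphic vector fields admitting holomorphy potentials, I aim to establish the Matsushima-type splitting $\mathfrak{h}=(\mathfrak{k}\cap\mathfrak{h})\oplus J(\mathfrak{k}\cap\mathfrak{h})$. Writing a complex potential $f=f_R+if_I$ for $X=\grad^{1,0}_\varphi f\in\mathfrak{h}$, the candidate decomposition is the standard one in terms of the Hamiltonians of $f_R$ and $f_I$; the real content is that each of these Hamiltonians is $g_\varphi$-Killing. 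The key analytic input is a sixth-order self-adjoint Lichnerowicz-type operator $\mathcal{L}_6$ on real smooth functions, obtained as the linearization of the LHS of \eqref{PDEB} at $\omega_\varphi$ in the K\"ahler-potential direction. At a solution this operator is non-negative (by the positivity of the formal K\"ahler metric from Theorem~\ref{thm:main1}, which is effective precisely because $\Scal_\varphi>0$), and its kernel modulo constants coincides with the space of real-holomorphy potentials. Splitting $f=f_R+if_I$ and using the self-adjointness forces $\mathcal{L}_6 f_R=\mathcal{L}_6 f_I=0$ separately, giving the desired decomposition. Maximality of $K$ in $\Aut_0(M,J)$ follows from this splitting by the standard compact-Lie-algebra argument: any compact $K'\supset K$ with Lie algebra $\mathfrak{k}'\supset\mathfrak{k}$ would provide $X\in\mathfrak{k}'$ with $JX\in \mathfrak{aut}$, and $\mathrm{ad}(JX)$ having purely imaginary spectrum on the compact $\mathfrak{k}'$ would force $JX\in\mathfrak{k}$, contradicting $X\notin\mathfrak{k}$.

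The main obstacle will be the identification of $\mathcal{L}_6$ and the proof of its non-negativity with the correct kernel; this is the higher-order analogue of the classical identity $4\mathcal{L}f=\Delta^2 f+2\Ric(\nabla f,\nabla f)+\nabla\Scal\cdot\nabla f$ underlying cscK Matsushima, and should be extracted by a direct computation from the explicit symplectic form of Theorem~\ref{thm:main1}. A backup route, bypassing an explicit formula for $\mathcal{L}_6$, is to argue via geodesic convexity of $\mathcal{M}$: for small $t$ the orbit $\exp(tJX_K)\cdot\omega_\varphi$ is a smooth geodesic segment in the positive-scalar-curvature region, along which $\mathcal{M}$ is both affine (one-parameter-subgroup invariance combined with the already-proven $\Fut\equiv 0$) and convex, hence constant; the vanishing of the second derivative then forces $JX_K$ to preserve $\omega_\varphi$, yielding the required Killing property.
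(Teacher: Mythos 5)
Your treatment of the Futaki vanishing matches the paper's Proposition~\ref{p:Futaki}: at a solution the density defining $\mathcal{F}_{\varphi}$ vanishes identically, and the independence of $\mathcal{F}$ from the representative metric (proved in the paper via closedness and flow-invariance of the one-form $\boldsymbol{\Theta}$) upgrades this to $\mathcal{F}\equiv 0$ on all of $\mathfrak{h}(M,J)$. No issues there.

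For reductivity and maximality there are two genuine gaps. First, the crux of your primary route is the assertion that the kernel of $\mathcal{L}_6$ modulo constants ``coincides with the space of real-holomorphy potentials''; as stated this is both unproved and not what you need. What the positivity of the formal metric $\boldsymbol{g}$ gives is that the quadratic form of the linearization at a solution equals $\|\mathcal{L}_{J\nabla \dot\varphi}J\|^2_{\boldsymbol{g}}$ (cf.\ \eqref{variational-formula}), so $\ker\mathcal{L}_6$ consists of \emph{Killing} potentials; the substantive step is then to show that the components $f^Z,h^Z$ of the Hodge decomposition of a holomorphic field $Z$ actually lie in this kernel. The paper does this without ever writing $\mathcal{L}_6$ down: it flows the solution by $Z$, observes that $(\phi_t^Z)^*\omega$ is a path of solutions so that $\boldsymbol{\Theta}_{\varphi(t)}\equiv 0$, and differentiates using \eqref{variational-formula} to obtain $0=\|\mathcal{L}_{J\nabla h^Z}J\|^2_{\boldsymbol{g}}$. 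Your backup route via geodesics has a related circularity: $t\mapsto\exp(t\nabla u)^*\omega_\varphi$ is a smooth Mabuchi geodesic only when $\grad^{1,0}u$ is already holomorphic (equivalently $J\nabla u$ is Killing), which for the component $h^Z$ of a general $Z$ is precisely what is being proved. Second, you only treat fields admitting holomorphy potentials, i.e.\ the reduced algebra $\mathfrak{h}_{\rm red}(M,J)$, whereas the corollary asserts reductivity of the full $\Aut(M,J)$; this requires handling the harmonic part $\psi_H^Z$ of the decomposition. The paper shows via the Bochner--Lichnerowicz formula that this part is parallel, yielding $\mathfrak{h}(M,J)=\mathfrak{a}\oplus\mathfrak{k}\oplus J\mathfrak{k}$ (Proposition~\ref{p:CLM-full}); without this step both the reductivity of $\Aut(M,J)$ and the maximality of the full isometry group remain open.
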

Motivated by the cscK case, one would expect to find a more general obstruction to the existence of a solution of \eqref{PDEB}, expressed as a stability condition on K\"ahler test configurations of $(M, \Ome)$. To this end, one needs to show  that: (a) the derivative of the Mabuchi functional associated to \eqref{PDEB} has a well-defined limit along any ray of smooth K\"ahler metrics compatible with the given K\"ahler test configuration, and (b) the limit is non-negative should a solution of \eqref{PDEB} exist. We show in the Appendix that (a) holds for K\"ahler test configurations with  orbifold singularities, noting that the case when $\A=0$ and the central fibre is smooth already follows from the results in \cite{Dervan}. As for (b), major new challenges arise if one  tries to use the approach  \cite{BB} in the cscK case:  first, it is much harder to obtain a possible extension of the Mabuchi energy associated to \eqref{PDEB} to the space of weak $C^{1,1}$ K\"ahler potentials, and second, the convexity of the Mabuchi energy only holds along geodesics of smooth K\"ahler metrics with positive scalar curvature. Nevertheless, (b) holds true in the K\"ahler--Einstein Fano case with $\Ome=c_1(M), \A=0$,  by a result in \cite{SW}.

\bigskip
Turning back to our initial motivation to study compact BHE $3$-folds, we use Corollary~\ref{c:obstruction} in conjunction with classification results of ruled complex surfaces in order to establish the following rigidity result:
\begin{thm}\label{thm:main2} Let $(N, J_N, g_N, \omega_N)$ be a  compact complex 3-dimensional non-K\"ahler  Hermitian manifold whose  K\"ahler form $\omega_N$ is pluriclosed  and whose Bismut Ricci form is identically zero. Suppose that the foliation $\VV$ in Theorem~\ref{thm:reduction} is regular, i.e. the Killing vector fields $V,J_NV$ generate a \emph{free} $2$-dimensional torus  action on $N$. Then,  the $(1,1)$ Chern--Bott cohomology  of $(N, J_N)$ is $2$-dimensional  if and only if   $(N, g_N, J_N, \omega_N)$  is isometrically covered by a Samelson geometry \eqref{Samelson-3D}.
\end{thm}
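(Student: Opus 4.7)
The ``if'' direction is immediate from Theorem~\ref{thm:reduction}, so I focus on the converse. By Theorem~\ref{thm:reduction} and the regularity hypothesis, $\pi : N \to S$ is a principal $T^2$-bundle over a smooth compact K\"ahler surface $(S, J^T, g_K^T, \omega_K^T)$ whose K\"ahler metric solves \eqref{PDE} with $\Scal(\omega_K^T) > 0$. By the last assertion of Theorem~\ref{thm:reduction}, it is enough to establish that $\Scal(g_K^T)$ is constant.

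The plan rests on three pillars. First, \eqref{PDE} is an instance of \eqref{PDEB} with $n=2$, $\A = [\al^T]/(2\pi)$, and $c_{\Ome,\A}=0$, so Corollary~\ref{c:obstruction} applies on $(S, J^T, \omega_K^T)$: the group $\Aut(S, J^T)$ is reductive and the identity component $G$ of the isometry group of $\omega_K^T$ is a maximal connected compact subgroup. Second, $\Scal(\omega_K^T)>0$ forces $S$ to have Kodaira dimension $-\infty$, so $S$ is rational or irrationally ruled. Third, I would perform a Bott--Chern cohomology computation on the principal $T^2$-bundle $\pi:N\to S$, whose connection forms $\eta_V, \eta_{W}$ satisfy $d\eta_V = \pi^*\al^T$ and $d\eta_{W} = -\pi^*\rho(\omega_K^T)$ and whose combination $\eta_V + i\eta_{W}$ is globally of type $(1,0)$. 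Restricting to the $T^2$-invariant bicomplex on $N$ (legitimate since $T^2$ acts by holomorphic isometries, allowing averaging) and using $\bar{\partial}(\eta_V + i\eta_{W}) = \pi^*(\al^T - i\rho(\omega_K^T))$ should yield an explicit expression for $\dim H^{1,1}_{\rm BC}(N,J_N)$ in terms of $h^{1,1}(S)$ and the rank of the classes $[\al^T], [\rho(\omega_K^T)]$ in $H^{1,1}(S, \R)$. Since the non-K\"ahler hypothesis on $N$ forces $[\rho(\omega_K^T)] \ne 0$, the assumption $\dim H^{1,1}_{\rm BC}(N,J_N)=2$ should then force $h^{1,1}(S)=2$.

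A minimal geometrically ruled surface with $h^{1,1}(S)=2$ is either a Hirzebruch surface $F_n$ $(n\ge 0)$ or a geometrically ruled elliptic surface $\PP(E) \to \Sigma$. Reductivity of $\Aut(S,J^T)$ eliminates $F_n$ for $n\ge 1$ (whose automorphism group contains the unipotent radical $H^0(\PP^1, \cO_{\PP^1}(n))$) and any $\PP(E)\to \Sigma$ with $E$ unstable (whose automorphism group similarly has a non-trivial unipotent radical). The only remaining possibilities are $S = \PP^1\times\PP^1$ and $\PP(E)\to \Sigma$ with $E$ polystable. In both cases, after passing to a finite unramified cover, $S$ becomes a K\"ahler product ($\PP^1\times\PP^1$ with $SU(2)\times SU(2)$ acting, respectively $\Sigma\times\PP^1$ with $\Sigma\times SU(2)$ acting), and the maximal compact subgroup $G$ acts transitively. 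Any $G$-invariant K\"ahler metric on such a surface has constant scalar curvature, so $\Scal(g_K^T) \equiv \mathrm{const}$, and Theorem~\ref{thm:reduction} delivers the Samelson cover.

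The main technical hurdle is the Bott--Chern computation in the second paragraph: Bott--Chern cohomology does not admit a convenient Leray-type description, and deducing $h^{1,1}(S)=2$ from $\dim H^{1,1}_{\rm BC}(N,J_N)=2$ requires a careful direct analysis of the $T^2$-invariant bicomplex on $N$ and of the $\partial\bar{\partial}$-image of the globally defined $(1,0)$-form $\eta_V + i\eta_{W}$.
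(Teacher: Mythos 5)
Your architecture parallels the paper's (reduce to the quotient surface $S$, constrain $S$ by the classification of surfaces admitting $\Scal>0$ together with the hypothesis $h^{1,1}_{\rm BC}(N)=2$, eliminate candidates by the automorphism obstruction, prove constancy of $\Scal$ on the survivors), and your observation that reductivity of $\Aut(S)$ alone kills the Hirzebruch surfaces $F_n$, $n\ge 1$, and the non-polystable elliptic ruled surfaces is a valid shortcut past the paper's explicit Futaki computation (Lemma~\ref{l:k neq 0}). But besides the Bott--Chern step you flag yourself (the paper quotes \cite[Prop.2.21]{ABLS} for $b_2(S)\le 2$ and gets $b_2(S)\ge 2$ from $b_+(S)=1$ and non-positive signature), two steps genuinely fail. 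First, you never use the integrated form of \eqref{PDE}, i.e.\ \eqref{topology}: $c_1^2(S)=-\A^2\ge 0$. Without it, minimal ruled surfaces of genus $g\ge 2$ such as $\Sigma_g\times\PP^1$ --- which have $h^{1,1}=2$ and reductive automorphism group --- pass all your filters, and they are excluded precisely because $c_1^2(S)=8(1-g)<0$; this is the content of Lemma~\ref{rough-classification}.

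Second, and more seriously, the concluding transitivity argument is false in the elliptic ruled cases. If $E=\cO_{\Sigma}\oplus L_0$ with $L_0$ of degree $0$ but not torsion, no finite unramified cover of $\PP(E)$ is a product. And even when $S$ is literally $\Sigma\times\PP^1$, or when $E$ is stable so that a finite cover is a product, the identity component $G$ of the isometry group of $\omega_K^T$ is only $(\text{torus})\times PU(2)$ (for $E$ stable, $\mathfrak{h}_{\rm red}(S)=0$ and $G$ is just a $2$-torus), which does not act transitively on a $4$-manifold; the unknown is exactly the metric along the base direction, so ``$G$-invariant $\Rightarrow$ cscK'' does not follow. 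What the paper actually uses here is the full strength of Proposition~\ref{p:CLM-full}: since $\mathfrak{h}(S)$ strictly contains $\mathfrak{h}_{\rm red}(S)$, the summand $\mathfrak{a}$ of $\omega$-parallel holomorphic vector fields is non-trivial, so ${\rm Rc}(\omega)$ has a $2$-dimensional kernel at every point, hence $\rho(\omega)\wedge\rho(\omega)\equiv 0$ and \eqref{PDE} (with $\al=0$, forced by $c_1^2(S)=0$) collapses to $\Delta\Scal=0$; in the honest product case $\Sigma\times\PP^1$ it instead reduces the equation to $-\Delta\Scal_{\Sigma}=\tfrac12\Scal_{FS}\Scal_{\Sigma}$ and integrates this against $\Scal_{\Sigma}$. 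You need one of these arguments; transitivity does not supply them.
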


Finally, we address the existence of solutions of \eqref{Box} on certain toric orbifold complex surfaces,  via the explicit orthotoric ansatz \cite{ACG, ACG-crelle}.  As a matter fact, such solutions appear already in a work by Couzens--Gauntlett--Martelli--Sparks~\cite[Sec.4.2]{CGMS} on supersymmetric ${\rm AdS}_3 \times Y_7$ type ${\rm IIB}$ supergravity,  where explicit complex $3$-dimensional solutions of \eqref{Box} with $\al=0$ are constructed as a product of a $2$-dimensional orthotoric K\"ahler orbifold surface $S_{\rm a, b, c}$ satisfying \eqref{Box}  and  a flat elliptic curve. In their construction, ${\rm a,b, c}$ are co-prime integers so chosen that $S_{\rm a,b,c}$ is the orbit space of a CR vector field $\xi$ on a $5$-dimensional CR manifold $\mathcal{L}^{\rm a,b,c}=(Y, \DD, J)$ diffeomorphic to $S^2 \times S^3$,  such that $\xi$ is transversal at each point to $\DD$, $(\DD, J)$  is endowed with a $\xi$-transversal orthotoric K\"ahler structure $(g^T, \omega^T)$ which satisfies \eqref{PDE} with $\al^T=0$, and the connection $1$-form $\eta_{\xi}$  of $\xi$ has curvature $c \rho(\omega_K^T)$ for a non-zero constant $c$. If we take $N=S^1\times Y$ and let $V$ be the circle generator of the $S^1$-factor whereas $J_N V:=-\frac{1}{c}\xi$, in view of Theorem~\ref{thm:reduction} and its inverse alluded to above, we obtain the following existence result:
\begin{thm}\label{thm:main3} 
$N=S^1\times S^2 \times S^3$ admits an infinite countable family of quasi-regular non-K\"ahler Bismut Ricci flat pluriclosed Hermitian metrics  with $2$-dimensional  $(1,1)$ Chern-Bott cohomology group, and which have transversal K\"ahler structure of non-constant scalar curvature. 
\end{thm}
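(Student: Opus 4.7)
The plan is to apply the inverse direction of Theorem~\ref{thm:reduction}---which manufactures a quasi-regular BHE $3$-fold from a principal $T^2$-bundle over a K\"ahler orbifold surface solving \eqref{PDE}---to the explicit orthotoric orbifold surfaces produced in the supergravity analysis of Couzens--Gauntlett--Martelli--Sparks~\cite{CGMS}.

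First, I would recall from \cite{CGMS} the infinite family of orthotoric K\"ahler orbifold surfaces $(S_{\mathrm{a,b,c}}, J^T, g_K^T, \omega_K^T)$ indexed by admissible coprime integer triples, each satisfying \eqref{Box} with primitive form $\alpha^T = 0$, each having \emph{non-constant} scalar curvature, and each arising as the space of leaves of a transverse CR vector field $\xi$ on a compact $5$-dimensional CR manifold $\mathcal{L}^{\mathrm{a,b,c}}$ diffeomorphic to $S^2\times S^3$; moreover $\xi$ admits a connection $1$-form $\eta_\xi$ whose curvature equals $c\,\rho(\omega_K^T)$ for a non-zero constant $c$. Setting $N := S^1 \times \mathcal{L}^{\mathrm{a,b,c}} \cong S^1\times S^2\times S^3$, I would view it as a principal $T^2$-bundle over $S_{\mathrm{a,b,c}}$ with basis $V := \partial_\theta$, $W := -\tfrac{1}{c}\xi$ of the torus Lie algebra, and equip it with connection $1$-forms $\eta_V := d\theta$ and $\eta_W := -c\,\eta_\xi$. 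Their curvatures are $0 = \alpha^T$ and $-\rho(\omega_K^T)$, matching exactly the curvature data demanded by the inverse construction recorded immediately after Theorem~\ref{thm:reduction}.

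Building $J_N$ and the metric $g_N = \pi^*\!\bigl(\tfrac{\Scal(\omega_K^T)}{2}\, g_K^T\bigr) + \eta_V^2 + \eta_W^2$ by that prescription then produces a quasi-regular non-K\"ahler Bismut--Hermitian--Einstein structure on $N$ with $2$-dimensional $(1,1)$ Bott--Chern cohomology; since the transversal scalar curvature $\Scal(\omega_K^T)$ is non-constant, the rigidity half of Theorem~\ref{thm:reduction} rules out that $(N, J_N, g_N)$ is locally isometric to any Samelson geometry \eqref{Samelson-3D}. Varying $(\mathrm{a,b,c})$ over an infinite set of admissible coprime triples, the orbifolds $S_{\mathrm{a,b,c}}$ have distinct orbifold isotropy data, so the corresponding transversal K\"ahler structures are mutually non-isomorphic; this yields pairwise distinct BHE structures on $S^1\times S^2\times S^3$.

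The principal obstacle---and the content of the explicit computation required in the main body of the paper---is to set up the orthotoric ansatz on a toric orbifold surface so that \eqref{Box} with $\alpha^T = 0$ reduces to a solvable ODE system with a $3$-parameter family of solutions, to identify the countable discrete subfamily of triples $(\mathrm{a,b,c})$ for which the $\xi$-orbits close up to a smooth compact $5$-manifold, and to verify the diffeomorphism type $\mathcal{L}^{\mathrm{a,b,c}}\cong S^2\times S^3$. All three ingredients are effectively furnished by \cite{CGMS}; only the translation from the $7$-dimensional GK-geometric formulation of \cite{CGMS} into the transversal K\"ahler language needed by Theorem~\ref{thm:reduction} requires fresh verification here.
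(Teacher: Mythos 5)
Your proposal follows the paper's proof essentially verbatim: both take the Couzens--Gauntlett--Martelli--Sparks orthotoric solutions with $\al_{\lambda}=0$ on the orbifold quotients $S_{\rm a,b,c}$ of the CR $5$-manifolds $\mathcal{L}^{\rm a,b,c}\cong S^2\times S^3$, set $N=S^1\times\mathcal{L}^{\rm a,b,c}$ with the torus generated by the $S^1$-factor and a rescaling of $\xi$, and feed this data into the inverse construction of Theorem~\ref{thm:reduction}, reading off non-constancy of the transversal scalar curvature from the explicit orthotoric formula. The only step you assert rather than argue is the $2$-dimensionality of $H^{1,1}_{\rm BC}(N,J_N)$, which the paper obtains from the exact sequence of \cite{ABLS} using $H^{1,0}(S_{\rm a,b,c},\C)=0$ and $\dim H^{1,1}(S_{\rm a,b,c},\R)=2$; also, to get curvature exactly $-\rho(\omega_K^T)$ the connection form should be $\eta_W=-\tfrac{1}{c}\eta_\xi$ (dual to $W=-c\,\xi$) rather than your reciprocal scaling, a harmless normalization slip.
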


We further investigate the existence of explicit orthotoric solutions of \eqref{Box} in the case when $\A \neq 0$. It turns out (see Proposition~\ref{p:othotoric}) that they exist on large families of complex toric  orbifold surfaces whose Delzant polytopes cover all rational compact convex quadrilaterals with no parallel  facets. Using this construction and a key fact established in \cite{ACGL} that each such orbifold can be realized as a Levi--K\"ahler quotient of the co-dimension $2$ CR manifold $S^3\times S^3 \subset \C^4$, we establish the following
\begin{thm}\label{thm:main4} $N=S^3\times S^3$ admits an infinite countable family of quasi-regular non-K\"ahler Bismut Ricci flat pluriclosed Hermitian structures with $2$-dimensional $(1,1)$ Chern-Bott  cohomology group, which have transversal K\"ahler structure of non-constat scalar curvature and are thus not isometric to the $SU(2)\times SU(2)$ Samelson's geometry.  
\end{thm}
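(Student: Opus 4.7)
The plan is to combine three ingredients: the explicit orthotoric ansatz from Proposition~\ref{p:othotoric}, the inverse of Theorem~\ref{thm:reduction} which builds a BHE 3-fold as a principal $T^2$-bundle over any K\"ahler orbifold surface solving \eqref{PDE}, and the realization of rational convex quadrilateral toric orbifolds as Levi--K\"ahler quotients of the co-dimension $2$ CR submanifold $S^3\times S^3\subset \C^4$ established in \cite{ACGL}.

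First I would invoke Proposition~\ref{p:othotoric} to produce, for each rational compact convex quadrilateral $P$ with no pair of parallel facets, an orthotoric K\"ahler orbifold surface $(S_P, J^T, g_K^T, \omega_K^T)$ carrying a non-trivial $\omega_K^T$-primitive closed $(1,1)$-form $\al^T$ which solves \eqref{PDE}. In contrast to the case $\al^T=0$ underlying Theorem~\ref{thm:main3}, here $\al^T \neq 0$, and generic members of this countable family have non-constant $\Scal(\omega_K^T)$. Applying the inverse reduction recalled after Theorem~\ref{thm:reduction}, I would form the principal $T^2$-bundle $\pi:N_P\to S_P$ whose connection $1$-forms $\eta_V,\eta_W$ have curvatures $\al^T$ and $-\rho(\omega_K^T)$, endowed with the horizontal-lift complex structure $J_N$ and metric $g_N=\pi^*(\tfrac{\Scal(\omega_K^T)}{2}g_K^T)+\eta_V^2+\eta_W^2$. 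By construction this is a quasi-regular non-K\"ahler BHE Hermitian structure on $N_P$.

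The main obstacle is to identify $N_P$ diffeomorphically with $S^3\times S^3$, and this is exactly where the result of \cite{ACGL} enters. That work exhibits every rational convex quadrilateral $P$ (with no parallel facets) as the image of the Kempf--Ness/moment map of a suitable $T^2$-action on $S^3\times S^3\subset \C^4$, with $S_P$ as the transversal orbit space and with two explicit $S^1$-connection components whose curvatures generate a distinguished $2$-plane in $H^{1,1}(S_P,\R)$ determined by the combinatorics of $P$. By tuning the quadrilateral and the integer normalizations of the primitive class so that the de Rham classes $[\al^T]$ and $[-\rho(\omega_K^T)]$ coincide with the Chern classes of these two $S^1$-fibrations, the principal $T^2$-bundle $N_P\to S_P$ is identified $T^2$-equivariantly with $S^3\times S^3\to S_P$. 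The matching condition amounts to finitely many rational linear equations in the parameters of $P$, and can be satisfied by infinitely many pairwise inequivalent quadrilaterals, yielding the countable infinite family claimed.

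Finally I would verify that $h^{1,1}_{\rm BC}(N_P, J_N)=2$. Theorem~\ref{thm:reduction} already gives the lower bound, and for the upper bound any Bott--Chern $(1,1)$-class on $N_P$ admits a $T^2$-invariant representative which decomposes into a horizontal $(1,1)$-form pulled back from $S_P$ plus a component along $\eta_V\wedge\eta_W$; using $h^{1,1}_{\rm BC}(S_P)=h^{1,1}(S_P)$ (valid for the toric K\"ahler orbifold $S_P$) together with the curvature identities $d\eta_V=\pi^*\al^T$ and $d\eta_W=-\pi^*\rho(\omega_K^T)$, one checks that modulo $dd^c$-exact classes the only independent representatives are $\pi^*[\omega_K^T]$ and $[\eta_V\wedge\eta_W]$. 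Combined with the non-constancy of $\Scal(\omega_K^T)$, the last sentence of Theorem~\ref{thm:reduction} then rules out the $SU(2)\times SU(2)$ Samelson cover, completing the proof.
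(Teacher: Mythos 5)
Your outline follows the same broad strategy as the paper (orthotoric solutions from Proposition~\ref{p:othotoric}, the inverse construction of Theorem~\ref{thm:reduction}, and the Levi--K\"ahler quotient picture of \cite{ACGL}), but the step where you identify the total space with $S^3\times S^3$ has a genuine gap. You build an abstract principal $\T^2$-(orbi)bundle $N_P\to S_P$ with curvatures $\al^T$ and $-\rho(\omega_K^T)$ and then try to match it with the fibration $S^3\times S^3\to S_P$ by ``tuning the quadrilateral so that the de Rham classes coincide with the Chern classes of the two $S^1$-fibrations''. Over an \emph{orbifold} base, agreement of rational Chern classes does not by itself identify the bundles nor guarantee that the total space is the smooth manifold $S^3\times S^3$; and no ``finitely many rational linear equations on $P$'' are actually needed. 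The mechanism that makes \cite{ACGL} applicable --- and which your write-up omits entirely --- is the ambitoric partner: since $A,B$ have degree $\le 3$ and $\lambda\neq 0$, the form $\omega_-:=\al_\lambda$ together with $g_-=\frac{\lambda}{(x-y)^2}g$ is a K\"ahler structure $J_-$ of Segre type for the \emph{opposite} orientation, and it is $(M,g_-,\omega_-,J_-)$ that \cite[Cor.\,2 \& Prop.\,6]{ACGL} realizes as the Levi--K\"ahler quotient of $S^3\times S^3$. This hands you $S^3\times S^3$ with its $\T^2$-action and a connection $\eta$ one of whose components has curvature exactly $\al_\lambda$ on the nose; the second component is then adjusted, using that the basic cohomology is $2$-dimensional and that both $d\eta_c$ and $\rho(\omega)$ are of type $(1,1)$ for $J_-$, by a $dd^c_{J_-}$-exact correction $\tilde\eta_c=\eta_c+d^c_{J_-}\varphi$ so that its curvature equals $-\rho(\omega)$ as a form. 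Without this you have neither the diffeomorphism type of the total space nor the exact (form-level) curvature matching that the inverse construction of \cite[Thm.\,1.1]{ABLS} requires.

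Two smaller points. First, your proposed Bott--Chern generators are off: $\eta_V\wedge\eta_W$ is not closed (its differential is $d\eta_V\wedge\eta_W-\eta_V\wedge d\eta_W$), so $[\eta_V\wedge\eta_W]$ is not a Bott--Chern class; the paper obtains $h^{1,1}_{\rm BC}(N,J_N)=2$ directly from \cite[Prop.\,2.21]{ABLS} and $b_2(M)=2$, the two dimensions coming from the base classes $[\omega_K^T]$ and $[\al^T]$. Second, non-constancy of the transversal scalar curvature is not merely ``generic'': every orthotoric solution here has $\Scal(\omega)=-(a_0+b_0)/(x-y)$, which is automatically non-constant.
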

To the best of our knowledge, Theorems~\ref{thm:main3} and \ref{thm:main4} provide the first known examples of non-K\"ahler compact BHE 3-folds which are not locally isometric to a Samelson geometry. These results also show that the hypotheses  implying the rigidity results in Theorems~\ref{thm:reduction} and \ref{thm:main2} are sharp.

\section{Quasi-regular BHE 3-folds: reduction to a 6th order PDE}

\subsection{The transversal K\"ahler geometry of a BHE and the reverse construction} 
\begin{defn}\label{d:BHE} Let $(N, J_N, g_N)$ be a hermitian manifold with fundamental $2$-form $\omega_N = g_N J_N$. Attached to $(g_N, J_N)$ is the hermitian connection
\[ \langle \nabla^{B}_X Y, Z) := \langle \nabla^{\rm LC}_X,Y, Z) - \frac{1}{2}(d^c\omega_N)_{X, Y, Z},\]
called the \emph{Bismut connection}, which in turn defines a closed $2$-form
\[ \rho_N^B(X, Y) := \sum_{i=1}^n\left\langle R^{\nabla^B}_{X, Y}, e_i, J_Ne_i\right\rangle \in 2\pi c_1(N, J_N),\]
where $R^{\nabla^B}$ is the curvature tensor of $\nabla^B$;  
we say that $(g_N, J_N)$ is \emph{pluriclosed} if $dd^c\omega_N=0$ and \emph{Bismut Ricci flat} if $\rho_N^{B} \equiv 0$. If both conditions holds simultaneously, we refer to $(N, g_N, J_N)$ as a \emph{Bismut-Hermitian-Einstein} (BHE for short).
\end{defn}
When $(N,J_N)$ is  a compact complex manifold which admits pluriclosed Hermitian metrics, a BHE Hermitian metric $g_N$ on $(N, J_N)$ is a fixed point of the \emph{pluriclosed flow}~\cite{PCF}, and thus defines a stationary \emph{soliton}  for the flow. The theory from \cite{SU2} then tells us that there exists a smooth function $f$ and an associated Killing vector field $V:= g_N^{-1}(\eta_N + df)$, where $\eta_N := J_N (d^* \omega_N)$ is the Lee form of $(g_N, J_N)$. Furthermore, assuming $(g_N, J_N)$ is not K\"ahler, it was shown in \cite{ABLS} that $V$ is $\nabla^B$-parallel (and hence of constant non-zero norm) and that $J_NV$ is also Killing. Using this,  it was established in \cite[Thm.1.1]{ABLS} (see Theorem~\ref{thm:reduction} in the Introduction) that each non-K\"ahler BHE complex 3-fold  $(N, J_N, g_N)$ admits a $2$-dimensional foliation $\VV={\rm span}(V, J_NV)\subset TN$, a $\VV$ transversal codimension $2$ CR structure $(\mathcal{H}, J)$ with a compatible $\VV$-transversal complex 2-dimensional K\"ahler structure $(M^T, J^T, \omega_K^T, \al^T)$ satisfying \eqref{PDE}.  Conversely, the BHE Hermitian  manifold $(N, g_N, J_N)$ can be locally reconstructed from the data $(M^T, J^T, \omega_K^T, \al^T)$ as follows:  Let $N$ be a principal $\R^2$-bundle over $M^T$, $V, W$ be the fundamental vector fields for the $\R$-actions,   $\eta=(\eta_V, \eta_{W})$ be the $\R^2$-valued connection $1$-form with curvature $\left(\al^T, -\rho(\omega_K^T)\right)$,   and $\mathcal{H}= {\rm ann}(\eta)\subset TN$ be the $4$-dimensional horizontal sub-bundle of $\eta$ so that, at each point, 
\begin{equation}\label{transversal}TN = \VV \oplus \mathcal{H}.\end{equation}
We can lift $J^T$ to define an almost CR structure $J$ on $\mathcal{H}$. As the curvature of $\eta$ is the pullback to $N$  of a  $(1,1)$-form  on $(M^T, J^T)$, $J$ is in fact CR-integrable. We can then extend $J$ to an integrable almost complex structure $J_N$ on $N$ by 
letting
\[ J_N(V):=W, \qquad J_N(W):= -V.\]
Furthermore, 
\begin{equation}\label{inverse}
\begin{split}
g_N &= \left(\frac{\Scal(\omega_K^T)}{2}\right)g_K^T + \eta_V\otimes \eta_V + \eta_{W} \otimes \eta_{W}
\end{split}
\end{equation}
defines a $J_N$-compatible Hermitian metric and \cite[Thm.1.1]{ABLS} tells us that $(N, g_N, J_N)$ is BHE with transversal K\"ahler structure $(M^K, g_K^T, \omega_K^T, \al^T)$.
 
\begin{defn} We say that a compact non-K\"ahler BHE 3-fold $(N,g_N, J_N)$ is \emph{quasi-regular} if $(V, J_N V)$ generate a $2$-dimensional real  torus  $\T^2_{\VV}$ of isometries of $(g_N,J_N)$. In this case, by \eqref{transversal}, $\T^2_{\VV}$ acts semi-freely on $N$, and $M^T=N/\T^2_{\VV}$ is a compact orbifold endowed with a K\"ahler orbifold structure $(g_K^T, J^T, \omega_K^T)$ satisfying \eqref{PDE}, or equivalently \eqref{Box}. We say that $(N, g_N, J_N)$ is \emph{regular} if, furthermore, $M^T$ is  smooth.
\end{defn}
In the quasi-regular case, the quotient map $\pi: N \to M^T$ is a principal $\T^2$ (orbifold) bundle and the inverse construction described above allows us to construct a global BHE metric $(g_N, J_N)$ on $N$ from a K\"ahler structure $(g_K^T,\omega^T_K)$ on $M^T$ satisfying  \eqref{PDE} as soon as the  ${\rm span}_{\R}([\omega_K^T], [\theta^T]) \subset H^2(M, \R)$  admits an integer basis. 

\subsection{Regular BHE's from compact K\"ahler surfaces}
We shall assume in this section that $S:=(M^T, J^T)$ is a smooth compact complex surface and we shall look for a K\"ahler metric $\omega$ and a closed primitive $(1,1)$-form $\al$ on $S$, satisfying \eqref{PDE}. We shall assume further that the real span of the deRham classes $\A:=\frac{1}{2\pi}[\al^T]$ and $c_1(S)=\frac{1}{2\pi} [\rho(\omega_K^T)]$ admits an integer basis, so that the inverse construction \eqref{inverse} will give rise to a \emph{compact} non-K\"ahler BHE 3-fold,  obtained as a (global) principal torus bundle over $S$ rather than just a local $\R^2$-bundle. We shall refer to this special case as a \emph{regular} BHE.

\smallskip
In this setup, we shall look for K\"ahler metrics $\omega \in 2\pi \Ome$ within a fixed K\"ahler deRham class $\Ome$ on $S$. Integrating \eqref{PDE} and using that $\al$ is $\omega$-primitive, we  obtain that $\A$ and $\Ome$ must satisfy
\begin{equation}\label{topology}
\Ome\cdot \A =0, \qquad   \A^2 + c_1^2(S)=0.
\end{equation}
We thus reduced the search of $(\omega, \al)$ on $S$  satisfying \eqref{PDE} to a 6th-order non-linear PDE problem: Let $\omega_0 \in 2\pi \Ome$ be a fixed reference K\"ahler metric and denote by $\mathcal{H}_{\omega_0}$ the space of all smooth functions $\varphi$ on $S$ such that $\omega_{\varphi}:= \omega_0 + dd^c \varphi >0$ is a K\"ahler metric in $2\pi \Ome$. Then we want to find $\varphi \in \mathcal{H}_{\omega_0}$ such that
\begin{equation}\label{PDEA} \frac{1}{2}dd^c \Scal_{\varphi}\wedge \omega_{\varphi} =  \al_{\varphi}\wedge \al_{\varphi} + \rho_{\varphi} \wedge \rho_{\varphi}, \qquad \Scal_{\varphi}>0\end{equation}
where $\Scal_{\varphi}$ and $\rho_{\varphi}$ denote, respectively, the scalar curvature and Ricci form of the K\"ahler metric $\omega_{\varphi}$, and $\al_{\varphi}$ stands for the harmonic representative of $\A$ with respect to $\omega_{\varphi}$. The condition $\Scal_{\varphi}>0$ implies $H^{2,0}(S)=\{0\}=H^{0,2}(S)$ (see \cite{Yau}), so by Hodge theory, $\al_{\varphi}$ is type $(1,1)$; as the $\omega_{\varphi}$-trace of the harmonic $2$-form $\al_{\varphi}$ is constant, the topological condition $\Ome \cdot \A=0$ ensures that $\al_\varphi$ is $\omega_{\varphi}$-primitive, i.e. we have a solution of \eqref{PDE} compatible with our choice of deRham classes $\Ome$ and $\A$.

\subsection{Kodaira's classification of complex surfaces possibly admitting solutions to \eqref{PDEA}}
\begin{lemma}\label{rough-classification} Suppose $S$ is a compact K\"ahler surface which supports a K\"ahler metric $\omega$ and a $\omega$-primitive closed $(1,1)$-form $\al$ satisfying \eqref{PDE}. Then $S$ must be one of the following
\begin{enumerate}
\item  $S=\PP^1\times \PP^1$ or is a Hirzebruch complex surface $S=\PP(\cO_{\PP^1} \oplus \cO_{\PP^1}(k)), \, k\geq 1$;
\item  $S$ is a minimal ruled complex surface of genus $1$, i.e. is the projectivisation $\PP(E)$ of a rank 2 holomorphic vector over an elliptic curve;
\item $S$ is a complex surface obtained from $\PP^1 \times \PP^1$ or a Hirzebruch surface by blowing up at most $8$ points.
\end{enumerate}
\end{lemma}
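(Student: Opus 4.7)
The argument combines the topological identity obtained by integrating \eqref{PDE}, Yau's vanishing theorem, the Hodge index theorem, and the Kodaira--Enriques classification; no genuinely new analytic input is needed.

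First, I would integrate \eqref{PDE} over $S$. Since $d\omega=0$, Stokes' theorem annihilates the left-hand side, while the right-hand side evaluates to $(2\pi)^2(\A^2+c_1^2(S))$. Together with the $\omega$-primitivity of $\al$ (which reads $\A\cdot\Ome=0$) this reproduces the constraints \eqref{topology}, in particular the identity $c_1^2(S)=-\A^2$.

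Next, I would invoke two classical results. By Yau~\cite{Yau}, the hypothesis $\Scal>0$ forces $H^0(S, mK_S)=0$ for every $m\geq 1$, so the K\"ahler surface $S$ has Kodaira dimension $-\infty$. The Kodaira--Enriques classification then exhibits $S$ as a (possibly iterated) point-blowup of one of the minimal models $\PP^2$, $\PP^1\times\PP^1$, a Hirzebruch surface $\mathbb{F}_n$ ($n\geq 1$), or a geometrically ruled surface $\PP(E)\to C$ over a smooth compact curve $C$ of genus $g\geq 1$. On the other hand, the Hodge index theorem asserts that the intersection form on $H^{1,1}(S,\R)$ has signature $(1,h^{1,1}(S)-1)$ with positive direction $\R\cdot[\omega]$; together with $\A\cdot\Ome=0$ this yields $\A^2\leq 0$, whence
\[ c_1^2(S)\;=\;-\A^2\;\geq\;0. \]

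It remains to enumerate the minimal models with $c_1^2\geq 0$ and to keep track of how blowups (each dropping $c_1^2$ by one) intervene. Using $c_1^2(\PP^2)=9$, $c_1^2(\PP^1\times\PP^1)=c_1^2(\mathbb{F}_n)=8$, and $c_1^2(\PP(E))=8(1-g)$, the base-genus $g\geq 2$ is excluded outright, the case $g=1$ admits no blowups and yields item (2), while the rational case allows at most $9$ blowups of $\PP^2$ or at most $8$ blowups of $\PP^1\times\PP^1$ or $\mathbb{F}_n$. Finally, $\PP^2$ itself has $h^{1,1}=1$, forcing $\A=0$ by primitivity and then $\A^2+c_1^2(\PP^2)=9\neq 0$, in contradiction with the first step; invoking the classical isomorphism $\mathrm{Bl}_1\PP^2\cong\mathbb{F}_1$, every non-minimal rational surface with $c_1^2\geq 0$ can be rewritten as a blowup of $\PP^1\times\PP^1$ or of some $\mathbb{F}_n$ at at most $8$ points. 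This produces items (1) and (3). I do not foresee any substantial difficulty: once the topological identity, Yau's vanishing and surface classification are on the table, the only mildly delicate point is the last bookkeeping step on iterated blowups of $\PP^2$ versus Hirzebruch surfaces.
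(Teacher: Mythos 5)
Your proposal is correct and follows essentially the same route as the paper: integrate \eqref{PDE} to obtain $c_1^2(S)=-\A^2\geq 0$ (the paper deduces $\A^2\le 0$ directly from anti-self-duality of the primitive $(1,1)$-form rather than via Hodge index, but this is the same fact), use Yau's theorem and the Enriques--Kodaira classification to reduce to rational or ruled surfaces, and exclude $\PP^2$ because primitivity forces $\A=0$ there while $c_1^2(\PP^2)=9\neq 0$. Your explicit bookkeeping of blowups of $\PP^2$ versus Hirzebruch surfaces via $\mathrm{Bl}_1\PP^2\cong\mathbb{F}_1$ just spells out what the paper delegates to a citation of \cite{BPV}.
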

\begin{proof} Using $\Scal(\omega)>0$, it follows that the Kodaira dimension of $S$ is negative (see \cite{Yau}).
By the Enriques--Kodaira classification (see e.g. \cite[Ch.VI]{BPV}), $S$ must be either a rational surface or a ruled complex surface of genus $\geq 1$.  The topological constraint in \eqref{topology} yields $c_1^2(S) = - \A^2 \geq 0$ (as $\A$ is represented by an $\omega$-primitive form) with equality iff $\al=0$ in \eqref{PDE}. This implies that the only irrational ruled complex surfaces that can possibly support solutions to \eqref{PDE} must be minimal and of genus $0$ (otherwise $c_1^2(S) <0$, see \cite{BPV}), so we get (2). 

As for rational complex surfaces satisfying $c_1^2(S) \geq 0$, they are precisely the surfaces  covered by the cases (1) and (3) of the Lemma (see \cite{BPV}),   or $\PP^2$. However, as $b_-(\PP^2)=0$,  any solution of  \eqref{PDE} on $\PP^2$ must satisfy $\al =0$, hence by \eqref{topology}, $c_1(S)^2=0$, a contradiction. \end{proof}
\subsection{Higher-dimensional setup} 
We can extend the problem on a compact K\"ahler manifold or orbifold $(M, J)$ of arbitrary complex dimension  $n \geq 2$,  endowed with a K\"ahler class $2\pi \Ome \in H^{2}(M, \R)$ and a deRham class $\A \in H^2(M, \R)$ satisfying
\begin{equation}\label{topo-n}
\begin{split}
    &\A\cdot \Ome^{n-1}=0, \qquad \A \cdot [\gamma] \cdot \Ome^{n-2}=0 \qquad \forall \gamma \in H^{2,0}(M, \C). \end{split}\end{equation}
We then seek for a K\"ahler metric $\omega_{\varphi} \in 2\pi \Ome$ on $(M, J)$ such that
\begin{equation}\label{PDEAn}
-\frac{1}{2} dd^c \Scal_{\varphi} \wedge \omega_{\varphi}^{[n-1]} +\left( \al_{\varphi} \wedge \al_{\varphi} + \rho_{\varphi}\wedge \rho_{\varphi}\right) \wedge \omega_{\varphi}^{[n-2]} = \left(\frac{c_{\Ome, \A}}{2}\right)\omega_{\varphi}^{[n]}, \qquad \Scal_{\varphi} >0,
\end{equation}
where $\al_{\varphi}$ is the harmonic representative of $2\pi \A$ with respect to $\omega_{\varphi}$,  and $c_{\Ome, \A}$ is the topological constant
\begin{equation}\label{topo-c} c_{\Ome, \A}:= 2n(n-1)\left(\frac{(c_1^2(M) + \A^2)\cdot \Ome^{n-2}}{\Ome^n}\right),\end{equation}
obtained by integrating \eqref{PDEAn} over $M$.
Again, the cohomological conditions \eqref{topo-n}  ensure that $\al_{\varphi}$ is a $\omega_{\varphi}$-primitive harmonic $(1,1)$-form.  
Dividing  \eqref{PDEAn}  by the  Riemannian volume form $\omega_\varphi^{[n]}$,  it can be  equivalently rewritten as \eqref{PDEB} from the Introduction.

If $(\Ome, \A)$ further satisfy the cohomological relation
\[ \A^2\cdot \Ome^{n-2} = -c_1(M)^2\cdot \Ome^{n-2}, \]
then $c_{\Ome, \A}=0$ and \eqref{PDEAn} reduces to \eqref{Box} from the introduction.
\begin{rmk} When $(M, J)$ is Fano and $\Ome=c_1(M), \A=0$, a prominent special solution of \eqref{PDEAn} is given by the K\"ahler-Einstein metrics on $M$. 
\end{rmk}

\section{A momentum map interpretation} In the case when $\A=0$ and the automorphism group of $(M, J)$ is discrete, it was observed in \cite{Dervan,DH} that \eqref{PDEAn} can be interpreted as a zero of a momentum map defined on a suitable infinite dimensional submanifold $\mathcal{J}_{\omega_0}$ of the Fr\'echet manifold $\mathcal{AC}_{\omega_0}$ of all almost-complex structures compatible with a fixed symplectic form $\omega_0$,  acted by the group of hamiltonian symplectomorphisms ${\rm Ham}_{\omega_0}$;  this extends foundational work by Donaldson~\cite{donaldson} and Fujiki~\cite{fujiki} in the case of K\"ahler metrics with constat scalar curvature. However, a caveat in the approach in \cite{Dervan, DH} is that it is unclear from their construction whether or not the closed ``symplectic'' form $\boldsymbol{\Omega}$ they introduce on $\mathcal{J}_{\omega_0}$ is K\"ahler (or even non-degenerate) with respect to the natural formal complex structure $\boldsymbol{J}$.  The k\"ahlerianity of $\boldsymbol{\Omega}$ turns out to  play a key r\^{o}le in our applications of this formal GIT setup, namely for establishing a Calabi--Lichnerowicz--Matsushima type obstruction theorem for the existence of solutions to \eqref{PDEAn},   as well as for proving the convexity of the Mabuchi energy associated to \eqref{PDEAn} along certain smooth geodesics in the space $\mathcal{H}_{\omega_0}$.  

Thus motivated, we will first present the momentum map setup for \eqref{PDEAn}, extending \cite{Dervan,DH} to the case $\A\neq 0$,   and we will obtain an \emph{explicit} formula 
for the relevant symplectic form $\boldsymbol{\Omega}$,  proving that \eqref{PDEAn} corresponds to a zero of the momentum map for the action of ${\rm Ham}_{\omega_0}$ on a formal \emph{K\"ahler} manifold $(\mathcal{J}_{\omega_0}^+, \boldsymbol{\Omega}, \boldsymbol{J})$. 
We refer to \cite[Ch. 9]{gauduchon-book} for a  detailed treatment of the background material we will need.

\smallskip
In what follows, we fix a K\"ahler metric $\omega_0 \in 2\pi \Ome$ on $M$ and denote by $J_0$ the underlying $\omega_0$-compatible, integrable almost complex structure.  We then consider the space $\mathcal{AC}_{\omega_0}$ of all almost complex structures $J$ on $M$ which are $\omega_0$-compatible, i.e. satisfy  that the tensor
\[ g_J :=-\omega_0J \]
is a riemannian metric on $M$. Clearly, $J_0\in \mathcal{AC}_{\omega_0}$. The space $\mathcal{AC}_{\omega_0}$ has a structure of a formal (Fr\'echet) manifold with tangent space at a point $J\in \mathcal{AC}_{\omega_0}$
\[ \boldsymbol{T}_J(\mathcal{AC}_{\omega}) =\left\{ \dot J \in C^{\infty}(X, {\rm End}(TX)) \, \Big| \, J\dot{J}= -\dot J J, \, \, g_{J}(\dot J, \cdot) =g_J(\cdot, \dot J)\right\}. \]
Furthermore, the  Cayley  transform  gives rise to a  bijective map  from $ \mathcal{AC}_{\omega_0}$  to an open subset of an infinite dimensional complex vector space, thus  leading to an integrable  almost-complex structure $\boldsymbol{J}$ on $\mathcal{AC}_{\omega_0}$ satisfying (see \cite[Prop.9.2.1]{gauduchon-book}):
\[\boldsymbol{J}_J(\dot J) := J \dot J. \]
With this understood, we introduce a $(1,1)$-form $\boldsymbol{\Omega}$ on $(\mathcal{AC}_{\omega_0}, \boldsymbol{J})$ by the formula
\begin{equation}\label{omega}
\begin{split}
\boldsymbol{\Omega}_J(\dot J_1, \dot J_2) :=& \, \frac{1}{4}\int_M \left\langle J\dot{J}_1, \dot{J_2}\right\rangle_J \Scal_J \,\omega_0^{[n]}  + \frac{1}{2}\int_M\left\langle (\delta_J J\dot J_1), (\delta_J \dot J_2)  \right\rangle_J\omega_0^{[n]}\\
& + 4\int_M \left\langle \mathbb{G}_J\left((\al_J J\dot J_1)^{\rm skew}\right), \left(\al_J \dot J_2\right)^{\rm skew}\right\rangle_J \, \omega_0^{[n]},
\end{split}
\end{equation}
where:
\begin{itemize}
 \item $\Scal_J$  denotes the \emph{hermitian} scalar curvature of the almost-K\"ahler structure $(g_{J}, J, \omega_0)$, i.e. the $\omega_0$-trace of the Ricci $2$-form $\rho_J\in 2\pi c_1(X, \omega_0)$ of $(g_J, J, \omega_0)$,  computed from the Chern connection $\nabla^{c, J} = \nabla^J -\frac{1}{2} J \nabla^J J$,  where $\nabla^J$ stands for the Levi--Civita connection of $g_J$;
 \item $\delta_J = (\nabla^J)^*$ is the co-differential with respect to the Levi-Civita connection $\nabla^J$ of $g_J$;
 \item $\mathbb{G}_J$ is the Green operator with respect to $g_J$, acting on $2$-forms;
 \item $\langle \cdot, \cdot\rangle_J$ is the induced inner product on tensors via $g_J$;
 \item $\al_J$ is the harmonic representative of $2\pi \A$ with respect to $g_J$;
 \item $\left(\cdot \right)^{\rm skew}$ stands for the skew-symmetric part of a $(2,0)$-tensor.
\end{itemize}
By its very definition, $\boldsymbol{\Omega}$ is $\boldsymbol{J}$-invariant and
\begin{equation}\label{g}
\begin{split}
\boldsymbol{g}_J(\dot J_1, \dot J_2) :=& -\boldsymbol{\Omega}_J(\boldsymbol{J}\dot J_1, \dot J_2) \\
=& \, \frac{1}{4}\int_M \left\langle \dot{J}_1, \dot{J_2}\right\rangle_J \Scal_J \,\omega_0^{[n]}  + \frac{1}{2}\int_M\left\langle (\delta_J \dot J_1), (\delta_J \dot J_2)  \right\rangle_J\omega_0^{[n]}\\
& + 4\int_M \left\langle \mathbb{G}_J\left((\al_J\dot J_1)^{\rm skew}\right), \left(\al_J \dot J_2\right)^{\rm skew}\right\rangle_J \, \omega_0^{[n]}
\end{split} 
\end{equation}
defines a positive-definite formal Riemannian metric tensor on the open Fr\'echet subspace 
\[ \mathcal{AC}^+_{\omega_0}:= \{ J \in \mathcal{AC}_{\omega_0}\, | \, R_J >0 \, \mathrm{on} \, M\} \subset \mathcal{AC}_{\omega_0}.\]
Thus, $(\mathcal{AC}^+_{\omega_0}, \boldsymbol{J}, \boldsymbol{g})$ is a formal Hermitian manifold with fundamental form $\boldsymbol{\Omega}$. Notice that the group ${\rm Ham}_{\omega_0}$ acts on $\mathcal{AC}^+_{\omega_0}$, preserving the Hermitian structure $(\boldsymbol{g}, \boldsymbol{J}, \boldsymbol{\Omega})$.

\bigskip For the applications needed for this article, we shall further restrict the formal Hermitian structure $(\boldsymbol{g}, \boldsymbol{J}, \boldsymbol{\Omega})$ to a formal Fr\'echet complex submanifold $\mathcal{J}^+_{\omega_0} \subset \mathcal{AC}^+_{\omega_0}$, leaving it an open question to verify whether or not the further geometric properties of $(\boldsymbol{g}, \boldsymbol{J}, \boldsymbol{\Omega})$ we establish on $\mathcal{J}^+_{\omega_0}$ do actually  hold on $\mathcal{AC}^+_{\omega_0}$.

To this end, we define the subspace
\[ \mathcal{J}_{\omega_0} :=\left\{J \in \mathcal{AC}_{\omega_0}  \, \Big| \, \exists \phi \in {\rm Diff}_0(M) \, \, \mathrm{s.t.} \, \,  \phi \cdot J := \phi_* \dot J \phi_*^{-1} = J_0\right\} \subset \mathcal{AC}_{\omega_0}.\]
It is shown in \cite[Prop.9.1.1]{gauduchon-book} that $\mathcal{J}_{\omega_0}$ is a (formal) complex submanifold of $\mathcal{AC}_{\omega_0}$ whose tangent space is
\[ \boldsymbol{T}_J(\mathcal{J}_{\omega_0}) = \{ -\mathcal{L}_Z J \, \, | \, \, Z \in \mathfrak{sp}_{\omega_0} + J\mathfrak{ham}_{\omega_0}\} \subset \boldsymbol{T}_J(\mathcal{AC}_{\omega_0}),\]
where $\mathfrak{sp}_{\omega_0}$ stands for the infinite dimensional Lie algebra of symplectic vector fields on $(M, \omega_0)$ and $\mathfrak{ham}_{\omega_0} < \mathfrak{sp}_{\omega_0}$ is the sub-algebra of Hamiltonian symplectic vector fields:
\[ \mathfrak{sp}_{\omega_0} := \{ X \in C^{\infty}(M, TM) \, | \, \mathcal{L}_X \omega_0 =0\}, \qquad \mathfrak{ham}_{\omega_0} : = \{ Y= -\omega_0^{-1}(df), \ | \, f\in C^{\infty}(M)\}. \]
Note that any tangent vector $-(\mathcal{L}_{Z} J) \in \boldsymbol{T}_J(\mathcal{J}_{\omega_0})$ is the velocity at $t=0$ of a smooth curve $J(t) \in \mathcal{J}_{\omega_0}$ with $J(0)=J$. This follows by writing $Z= X - J\omega_0^{-1}(df)$ and letting $J(t)= \phi_t^X \circ \psi_t^f \cdot J$, where $\phi^X_t \in {\rm Diff}_0(M)$ is the flow of the symplectic field $X$ and $\psi_t^f$ is the Moser lift of $\omega_t = \omega_0 + t d Jd f$.

We shall  further restrict to the open subset
\[\mathcal{J}_{\omega_0}^+ := \{ J \in \mathcal{J}_{\omega_0} \, | \, \Scal_J >0 \} \subset \mathcal{AC}^+_{\omega_0}. \]
What will be important for us is that any $J\in \mathcal{J}^+_{\omega_0}$ is, in fact,  an integrable almost-complex structure, thus giving rise to a K\"ahler structure $(g_J, J, \omega_0)$ and $J_0\in \mathcal{J}^+_{\omega_0}$. The pullback of the Hermitian structure $(\boldsymbol{g}, \boldsymbol{J}, \boldsymbol{\Omega})$ to $\mathcal{J}^+_{\omega_0}$ thus makes it a formal Hermitian manifold  on which  ${\rm Ham}_{\omega_0}$ acts isometrically. 

\begin{thm}\label{thm:GIT} In the setup above, assume that $H^1(M, \R)=\{0\}$. Then $\boldsymbol{\Omega}$ is a closed $2$-form on $\mathcal{J}^+_{\omega_0}$ and 
 ${\rm Ham}_{\omega_0}$ acts in a hamiltonian way on $(\mathcal{J}^+_{\omega_0}, \boldsymbol{\Omega})$ with momentum map  \[ \boldsymbol{\mu}(J):= -\left(-\frac{1}{2}\left(dd^c_J \Scal_J\right)\wedge \omega_0^{[n-1]} + \rho_J^2 \wedge\omega_0^{[n-2]} + \al_J^2\wedge \omega_0^{[n-2]}-\frac{c_{\Ome, \A}}{2}\omega_0^{[n]}\right) \in (C^{\infty}(M)/\R)^{*}.\]
\end{thm}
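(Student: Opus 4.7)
The plan is to adapt the Donaldson-Fujiki momentum map paradigm. Split $\boldsymbol{\Omega} = \boldsymbol{\Omega}^{(1)} + \boldsymbol{\Omega}^{(2)} + \boldsymbol{\Omega}^{(3)}$ according to the three summands in \eqref{omega}. Each summand is manifestly invariant under ${\rm Ham}_{\omega_0}$, since every ingredient (the metric $g_J$, the codifferential $\delta_J$, the harmonic representative $\al_J$, the Green operator $\mathbb{G}_J$, the form $\omega_0$) is natural under symplectomorphisms. The substance of the proof then consists of (a) establishing the moment map identity $\iota_{X_f^{\#}}\boldsymbol{\Omega} = d\mu^f$, and (b) deducing the closedness of $\boldsymbol{\Omega}$ from (a) together with the $\boldsymbol{J}$-invariance of $\boldsymbol{\Omega}$.

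For (a), fix $f\in C^\infty(M)/\R$ and set
\begin{equation*}
\mu^f(J) := \int_M f\cdot\boldsymbol{\mu}(J) = -\int_M f\left(-\tfrac{1}{2} dd^c_J\Scal_J\wedge\omega_0^{[n-1]} + \rho_J^2\wedge\omega_0^{[n-2]} + \al_J^2\wedge\omega_0^{[n-2]}\right),
\end{equation*}
where the topological contribution $c_{\Ome,\A}$ is killed by the normalization $\int_M f\,\omega_0^{[n]}=0$. Under $H^1(M,\R)=0$ one has $\mathfrak{sp}_{\omega_0}=\mathfrak{ham}_{\omega_0}$, and the fundamental vector field at $J$ is $X_f^{\#}|_J=-\mathcal{L}_{X_f}J$. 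I would then match $\iota_{X_f^{\#}}\boldsymbol{\Omega}^{(i)}$ to the corresponding piece of $(d\mu^f)_J(\dot J)$ term by term. For $\boldsymbol{\Omega}^{(1)}$, apply the classical Fujiki-Donaldson identity (see~\cite[Ch.~9]{gauduchon-book}) relating $\int_M\langle-\mathcal{L}_{X_f}J,\dot J\rangle_J\,h\,\omega_0^{[n]}$ to the first variation of $\int_M h\,\Scal_J\omega_0^{[n]}$, applied with $h=\Scal_J$. For $\boldsymbol{\Omega}^{(2)}$, use self-adjointness of $\delta_J$ and the K\"ahler identities to reduce $\delta_J J(-\mathcal{L}_{X_f}J)$ to a fourth-order differential operator in $f$; two integrations by parts then reproduce the $-\tfrac{1}{2} dd^c_J\dot\Scal_J\wedge\omega_0^{[n-1]}$ piece of $d\mu^f$. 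For $\boldsymbol{\Omega}^{(3)}$, differentiate the Hodge projection $H_{g_J}\colon\Omega^2(M)\to\mathcal{H}^2(g_J)$ along a path to express $\dot\al_J$ as $\mathbb{G}_J$ applied to $\delta_J$ of a tensor built from $\al_J$ and $\dot J$; pairing this with the Green-operator factor in $\boldsymbol{\Omega}^{(3)}$ produces the $\al_J\wedge\dot\al_J\wedge\omega_0^{[n-2]}$ contribution to $d\mu^f$.

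For (b), once (a) holds, Cartan's formula gives
\begin{equation*}
\iota_{X_f^{\#}}d\boldsymbol{\Omega} = \mathcal{L}_{X_f^{\#}}\boldsymbol{\Omega} - d(\iota_{X_f^{\#}}\boldsymbol{\Omega}) = 0 - d^2\mu^f = 0
\end{equation*}
for every $f\in C^\infty(M)/\R$. Because $\boldsymbol{\Omega}$ is of type $(1,1)$ with respect to $\boldsymbol{J}$, the 3-form $d\boldsymbol{\Omega}$ has type $(2,1)+(1,2)$; complexifying and using \cite[Prop.~9.1.1]{gauduchon-book} together with the identity $-\mathcal{L}_{JX_h}J=\boldsymbol{J}(-\mathcal{L}_{X_h}J)$, valid by integrability of $J$, one sees that the complex span $\{\tfrac12(X_f^{\#}+i\boldsymbol{J}X_f^{\#})\}_{f\in C^\infty(M,\C)/\C}$ exhausts the $(0,1)$-part of $\boldsymbol{T}_J(\mathcal{J}^+_{\omega_0})$. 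Extracting the pure-type components of $\iota_{X_f^{\#}}d\boldsymbol{\Omega}=0$ therefore forces $d\boldsymbol{\Omega}=0$.

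The main technical obstacle is step (a) for $\boldsymbol{\Omega}^{(3)}$: the harmonic representative $\al_J$ is a non-local function of $J$ through the Hodge decomposition with respect to $g_J$, so $\dot\al_J$ is determined only via a Green-operator inversion. The precise form of the third summand in \eqref{omega}, and in particular the simultaneous appearance of $\mathbb{G}_J$ and of the skew-symmetric projection $(\cdot)^{\mathrm{skew}}$, is tailored so that these non-local contributions collapse into the local quantity $\al_J^2\wedge\omega_0^{[n-2]}$; confirming that the constants, signs and skew-symmetrizations exactly reproduce the right-hand side of the claimed formula without extra boundary-type remainders will require delicate bookkeeping.
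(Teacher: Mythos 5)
Your overall strategy coincides with the paper's: the authors prove exactly your identity (a) as a ``momentum map property'' lemma, and then derive closedness by the same Cartan-formula-plus-type-decomposition argument you give in (b), using that under $H^1(M,\R)=\{0\}$ the fundamental vector fields $-\mathcal{L}_{X_f}J$ together with their $\boldsymbol{J}$-rotations span the tangent space of $\mathcal{J}_{\omega_0}$. Your step (b) is complete and correct as written. The problem is that step (a) is not actually a proof but a plan, and the part you yourself flag as the ``main technical obstacle'' is precisely where all the content lies: one must compute $\dot\al_J$ explicitly (the paper obtains $\dot\al_J=-2d\delta_J\mathbb{G}_J(\al_J J\dot J)^{\rm skew}$ from the Hodge decomposition and the commutation identities $[\Lambda_{\omega_0},d]=-\delta^c_J$, $[\Lambda_{\omega_0},d^c_J]=\delta_J$) and then verify that the Green-operator term of $\boldsymbol{\Omega}$ pairs against it with the right constants and skew-symmetrizations. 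Moreover, your proposed term-by-term matching of $\boldsymbol{\Omega}^{(i)}$ to pieces of $d\mu^f$ does not split as cleanly as you suggest: in the actual computation the variation of $\int_M(\Delta_J h)\Scal_J\,\omega_0^{[n]}$ and the variation of $\int_M h\,\rho_J^2\wedge\omega_0^{[n-2]}$ each contribute to \emph{both} the $\Scal_J$-weighted term and the $\delta_J$-term of $\boldsymbol{\Omega}$, and the cancellation that makes this work goes through the Bochner identity $\delta_J(\nabla^{J,-}d^c_Jh)=\tfrac12\Delta_J(d^c_Jh)-\rho_J(\nabla^Jh)$ and the Ricci identity $\delta_J\rho_J=-\tfrac12 d^c_J\Scal_J$. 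Without carrying this out, the claim that the three summands of $\boldsymbol{\Omega}$ are ``tailored'' to produce $\boldsymbol{\mu}$ remains an assertion.

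There is a second, independent omission: you never verify equivariance of $\boldsymbol{\mu}$, i.e.\ that $-\boldsymbol{\Omega}_J(\hat X,\hat Y)=\langle\boldsymbol{\mu}(J),\{f,h\}_{\omega_0}\rangle$ for Hamiltonian fields $X,Y$ generated by $f,h$. This is part of what ``${\rm Ham}_{\omega_0}$ acts in a hamiltonian way with momentum map $\boldsymbol{\mu}$'' means, and the paper devotes a separate lemma to it (the argument is short: the ${\rm Diff}(M)$-invariance of the assignment $J\mapsto\boldsymbol{\mu}(J)$ gives $\frac{d}{dt}\int_M(\phi^Y_t)^*(f)\,\boldsymbol{\mu}(J(t))=0$ along the flow line $J(t)=\phi^Y_t\cdot J$, whence $d\langle\boldsymbol{\mu},f\rangle(\hat Y)=\langle\boldsymbol{\mu},\{f,h\}_{\omega_0}\rangle$, and one concludes by the momentum map property). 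You should add this step; together with a full execution of (a), your outline would then reproduce the paper's proof.
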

\begin{rmk}
The assumption $H^1(M, \R)=\{0\}$ is used only to make sure that the tangent space $\boldsymbol{T}_J(\mathcal{J}_{\omega_0})=\{-(\mathcal{L}_V J) \, | \, V= \nabla^J f + J\nabla^J h\}$ is generated by \emph{hamiltonian} vector fields.  One can instead state the result for the restriction of the $2$-form $\boldsymbol{\Omega}$ to the $\boldsymbol{J}$-invariant foliation $\boldsymbol{D}_J = \{-(\mathcal{L}_V J) \, | \, V= \nabla^J f + J\nabla^J h\} \subset \boldsymbol{T}_J(\mathcal{J}_{\omega_0})$,  where the r\^{o}le of  ``symplectic manifold'' should be played by a ``leaf'' of $\boldsymbol{D}$, i.e. the ``complexified orbit'' for the holomorphic action of ${\rm Ham}_{\omega_0}$ on $\mathcal{J}_{\omega_0}$. It turns out that this formal interpretation is enough for our applications. 
\end{rmk}
The proof of Theorem~\ref{thm:GIT} will be established in three steps, corresponding to the three Lemmas below, in which \emph{do not assume} $H^1(M, \R)=\{0\}$. The first proves a general \emph{momentum map property} of $\boldsymbol{\mu}$, without assuming that $\boldsymbol{\Omega}$ is closed:
\begin{lemma}\label{l:mentum-property}
Let $h$ be a smooth function on $(M, \omega_0)$,  giving rise to a hamiltonian vector field $X_h=-\omega_0^{-1}(dh)$,  and  $J(t)\in \mathcal{AC}_{\omega_0}$  a smooth curve such that $J=J(0)$ is integrable. Denote by $\dot{J} := \frac{d}{dt}_{t=0} J(t) \in \boldsymbol{T}_J(\mathcal{AC}_{\omega_0})$ the the corresponding tangent vector.  Then the following identity holds:
\begin{equation}\label{momentum-property}-\frac{d}{dt}_{|_{t=0}}\int_M h \boldsymbol{\mu}(J(t))  = \boldsymbol{\Omega}_J((-\mathcal{L}_{X_h}J), \dot J). \end{equation}
\end{lemma}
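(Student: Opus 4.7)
My plan is to verify the identity by expanding both sides in parallel, splitting $\boldsymbol{\mu}$ into its three non-constant summands and matching them against the three summands of $\boldsymbol{\Omega}_J$ in \eqref{omega}. The bridge between the two sides is the classical identification of the infinitesimal action $-\mathcal{L}_{X_h}J$ with (twice) the $J$-anti-invariant, $g_J$-symmetric part of the Hessian $\nabla^J dh$, i.e.\ with a second-order differential expression in $h$. Together with $X_h=J\nabla^J h$, this lets one trade every derivative that falls on $h$ after integration by parts for a pairing of $-\mathcal{L}_{X_h}J$ with $\dot J$ through the operators $\Scal_J\cdot$, $\delta_J$ and $\mathbb{G}_J$ that appear in \eqref{omega}.

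The first key input is the Donaldson--Fujiki first-variation formula for the hermitian scalar curvature: at an integrable $J\in\mathcal{AC}_{\omega_0}$, a tangent vector $\dot J$ induces $\dot{\Scal}_J=\tfrac{1}{2}\langle J\dot J,\rho_J\rangle_J-\delta_J\delta_J(J\dot J)$ (up to normalisation and sign conventions), together with an analogous formula for $\dot{\rho}_J$ in terms of $\delta_J\dot J$ and a complex Hessian. Applied to the scalar-curvature piece of $\boldsymbol{\mu}$, after one integration by parts that moves $dd^c_J$ onto $h$ and uses the identification of $dd^c_J h$ with a combination of $-\mathcal{L}_{X_h}J$ and lower-order terms, the derivative reproduces the first two summands of $\boldsymbol{\Omega}_J(-\mathcal{L}_{X_h}J,\dot J)$. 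A parallel computation on the Ricci-squared piece $\rho_J^2\wedge\omega_0^{[n-2]}$ using $\dot{\rho}_J$ contributes additional terms to the $\delta_J$-summand; the residual terms from the scalar-curvature computation cancel against them, leaving exactly the second summand of \eqref{omega}.

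For the harmonic-form piece $\al_J^2\wedge\omega_0^{[n-2]}$, the key step is to compute $\dot{\al}_J$, the first variation of the harmonic representative of the fixed class $2\pi\A$ along $g_{J(t)}$. Since the de~Rham class is fixed, $\dot{\al}_J$ is $d$-exact, and harmonicity forces $\delta_J\dot{\al}_J=-(\dot\delta_J)\al_J$. Inverting the Hodge Laplacian via the Green operator then yields, schematically, $\dot{\al}_J=d\mathbb{G}_J\delta_J(\dot g_J\cdot\al_J)$, so that substitution into the derivative of $\int_M h\,\al_J^2\wedge\omega_0^{[n-2]}$ places $\mathbb{G}_J$ in precisely the position required by the third summand of \eqref{omega}. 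The skew-symmetric projection $(\cdot)^{\rm skew}$ there reflects the fact that the $J$-invariant part of $\al_J\circ\dot J$ drops out of the integrand thanks to the harmonicity of $\al_J$ and the primitivity condition $\A\cdot\Ome^{n-1}=0$, leaving only the $(2,0)+(0,2)$ component to contribute.

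The step I expect to be most delicate is the harmonic-form variation: the Hodge Laplacian depends on the full metric $g_{J(t)}$ rather than on $\omega_0$ alone, so one must carefully disentangle the variation of $\delta_{J(t)}$, verify that $\dot{\al}_J$ can indeed be put in the form prescribed by \eqref{omega}, and confirm that the correct skew-symmetric projection emerges from the type decomposition. A secondary technical point is the differentiation of $dd^c_{J(t)}\Scal_{J(t)}\wedge\omega_0^{[n-1]}$: both $d^c_{J(t)}$ and $\Scal_{J(t)}$ vary with $t$, and the cross-terms coming from $\dot{d^c}$ must be reconciled with the Donaldson--Fujiki identity through further integration by parts to ensure that no extra contributions survive.
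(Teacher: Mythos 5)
Your proposal follows essentially the same route as the paper's proof: term-by-term variation of the three summands of $\boldsymbol{\mu}$, the variational formulas $\dot\rho_J=-\tfrac{1}{2}d\delta_J\dot J$ and its trace for $\dot\Scal_J$, the Hodge-theoretic computation of $\dot\al_J$ through the Green operator (exactness of $\dot\al_J$ plus $\delta_J\dot\al_J=-\dot\delta_J\al_J$), and the identification of $-\mathcal{L}_{X_h}J$ with the $J$-anti-invariant Hessian of $h$, with the Ricci and Bochner identities absorbing the cross terms. One minor point: the skew projection in $\dot\al_J$ arises from the purely algebraic identity $\frac{d}{dt}\left(J^{-1}\al_J\right)=-2(\al_J J\dot J)^{\rm skew}$ valid for any $(1,1)$-form because $\dot J$ anticommutes with $J$, rather than from harmonicity or the primitivity condition as you suggest.
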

\begin{proof} We have 
\[
\begin{split}
  -2 \left(\frac{d}{dt}_{|_{t=0}}\, \int_M h \boldsymbol{\mu}(J(t))\right) =& \frac{d}{dt}_{|_{t=0}}\,\int_M h \left( (\Delta_{J(t)}\Scal_{J(t)}) \omega_0^{[n]} +2\rho_{J(t)}^2\wedge\omega_0^{[n-2]} + 2\al_{J(t)}^2\wedge \omega_0^{[n-2]}\right)\\
  =& \frac{d}{dt}_{|_{t=0}}\,\int_M (\Delta_{J(t)} h) \Scal_{J(t)}\omega_0^{[n]} +2h\rho_{J(t)}^2\wedge\omega_0^{[n-2]} + 2h\al_{J(t)}^2\wedge \omega_0^{[n-2]}
 \end{split} \] 
 and we will next develop each of the three terms of the last line separately. To this end, we shall use the variational formulae for $\dot \Scal_J, \dot \rho_J, \dot \Delta_J$ and $\dot\al_J$ which we detail first. By \cite[Prop.9.5.2]{gauduchon-book}, we know
\begin{equation}\label{ak-variation}
\dot{R}_J=-\delta_J J\delta_J\dot{J},\quad\dot{\rho}_J=-\frac{1}{2}d\delta_J\dot{J}. 
\end{equation}
The variation of the Laplace operator on 
functions is given by
\begin{equation}\label{variation-laplace}
\dot{\Delta}_J(\psi)=  -\Lambda_{\omega_0}(d\dot J d \psi) =\delta^c_J(\dot{J}d\psi),
\end{equation}
where $\dot J$ acts on $1$-forms by the opposite of its dual, $\delta_J^c =J\delta_J J$ is the adjoint of $d^c_J=JdJ^{-1}$,  and for the final equality we have used  the general commutation identities (see e.g. \cite{gauduchon-book}):
\begin{equation}\label{commuting} [\Lambda_{\omega_0}, d]= -\delta^c_J, \qquad [\Lambda_{\omega_0}, d_J^c]= \delta_J.\end{equation}
To compute $\dot \al_J$, notice that  $\dot \al_J$ is $d$-exact (as $[\al_J] = 2\pi\A$) so by the Hodge theorem 
\[ \dot \al_J = -\mathbb{G}_J(d\dot \delta_J \al_J)= - d \mathbb{G}_J(\dot \delta_J \al_J). \]
Using \eqref{commuting}, 
any closed primitive (1,1)-form  $\Psi$ relative to $J$ satisfies
\[
\begin{split}
 \dot{\delta}_J \Psi= & \frac{d}{dt}_{|_{t=0}} \left([\Lambda_{\omega_0}, d_J^c]\Psi \right)
    =[\Lambda_{\omega_0},\dot{d}_J^c]\Psi
=\Lambda_{\omega_0}(\dot d^c_J\Psi)
=\Lambda_{\omega_0} \left(\frac{d}{dt}_{t=0} \left(JdJ^{-1}\Psi \right)\right) \\
=&\Lambda_{\omega_0} \left(Jd \left(\frac{d}{dt}_{t=0}\left(J^{-1}\Psi\right)\right)\right)= -2\Lambda_{\omega_0} \left(J d (\Psi J \dot J)^{\rm skew}\right).
\end{split}
\]
For the last equality, we used that $\Psi$ is of type $(1,1)$,  so that for any tangent vectors  $a,b$ 
\[
\begin{split}
   \frac{d}{dt}  (J^{-1}\Psi)(a,b)=& \frac{d}{dt}(\Psi(Ja,Jb))
=\Psi(\dot{J}a,Jb)+\Psi(Ja,\dot{J}b)\\
=&-\Psi(J\dot{J}a,b)+\Psi(J\dot{J}b,a)\\
=-2(\Psi J\dot{J})^{\rm skew}(a,b).
\end{split}
\]
As $(\Psi J \dot J)^{\rm skew}$ is a $2$-form of type $(2,0)+ (0,2)$ wrt $J$, 
\[J d (\Psi J \dot J)^{\rm skew}= - J d J(\Psi J \dot J)^{\rm skew}= -J d J^{-1}(\Psi J \dot J)^{\rm skew} = - d_J^c (\Psi J \dot J)^{\rm skew}. \]
By \eqref{commuting} again, we conclude
\[\dot \delta_J \Psi =  2\Lambda_{\omega_0} d^c_J (\Psi J \dot J)^{\rm skew} = 2[\Lambda_{\omega_0}, d_J^c](\Psi J \dot J)^{\rm skew} = 2\delta_J (\Psi J \dot J)^{\rm skew}.\]
Taking $\Psi=\al_J$, we infer
\begin{equation}\label{variation-alpha}
\begin{split}
    \dot{\al}_J=&-2d\delta\mathbb{G}(\al_J J\dot{J})^{\rm skew}.
\end{split}
\end{equation}
We now return to the computation of $\dot{\boldsymbol{\mu}}$.
The first term is 
 \[\begin{split}
  \frac{d}{dt}_{|_{t=0}}\,\int_M (\Delta_{J(t)} h) \Scal_{J(t)}\omega_0^{[n]} &=\int_M \dot{\Delta}_J (h)\Scal_J  + (\Delta_J h)\dot\Scal_J \omega_0^{[n]}  \\
  &=\int_M \delta^c_J(\dot J dh) \Scal_J -(\Delta_J h) (\delta^J J \delta^J \dot J) \omega_0^{[n]} \\
  &= \int_M \left(-\left\langle \dot J(dh), d_J^c\Scal_J\right\rangle_J +\left\langle (\delta_J \dot J), d^c(\Delta_J h)\right\rangle_J\right)\omega_0^{[n]}.
 \end{split}\]
 In the above computation, we have used \eqref{variation-laplace}  and 
 \eqref{ak-variation} in the second line,   and integration by parts  taking into account  that $\dot{J}$ is $g_{J}$ self-adjoint,  in the third line.

 The second term is
 {
 \[
 \begin{split}
 & \frac{d}{dt}_{|_{t=0}}\int_M 2h \rho_{J(t)}^2 \wedge \omega_0^{[n-2]}=-2\int_M h(d\delta_J\dot{J}) \wedge \rho_J\wedge \omega_0^{[n-2]} \\
    &=\int_M\left(-h\left(\Lambda_{\omega_0}d\delta_J\dot{J}\right) \Scal_J+2\left\langle d\delta_J\dot{J} , h\rho_J\right\rangle_J \right)\omega_0^{[n]}=\int_M\left(h (\delta_J^c\delta_J\dot{J}) R_J+2\left\langle \delta_J\dot{J} , \delta_J(h\rho_J)\right\rangle_J\right)\omega_0^{[n]}\\
   &=\int_M\left(\left\langle\dot{J},\nabla^J (d_J^c R_Jh)\right\rangle_J   -h\left\langle \delta_J\dot{J} , d_J^cR_J\right\rangle_J -2\left\langle \delta_J\dot{J} , \rho_J(\nabla^{J} h)\right\rangle_J\right)\omega_0^{[n]}\\
   &=\int_M\left(\left\langle\dot{J},\nabla^J d_J^ch\right\rangle_J R_J +\cancel{\left\langle\dot{J},\nabla^{J} d_J^cR_J\right\rangle_J h}-\cancel{\left\langle\dot{J},\nabla^J(hd_J^cR_J)\right\rangle_J} \right)\omega_0^{[n]}\\
   & \, \, \, \, +\int_M\left(\left\langle \dot{J}, dR_J\otimes d_J^ch\right\rangle_J-\cancel{\left\langle \dot{J}, dh\otimes d_J^cR_J\right\rangle_J}\right)\omega_0^{[n]}\\
   &\, \, \, \,+2\int_M\left\langle (\delta_J\dot{J}) , \delta_J(\nabla^{J,-}d_J^ch)\right\rangle_J\omega_0^{[n]}-\int_M\left\langle d^c_J(\Delta_J h), \delta_J\dot{J}\right\rangle_J \omega_0^{[n]}\\
&=\int_M\left\langle\dot{J},\nabla^J d_J^ch\right\rangle_J R_J\omega_0^{[n]} + 2\int_M\left\langle \delta_J\dot{J} , \delta_J\nabla^{J,-}d_J^ch\right\rangle_J\omega_0^{[n]}-\int_M\left\langle d^c_J(\Delta_J h),  \delta_J\dot{J}\right\rangle_J\omega_0^{[n]}\\
&\, \, \, \,+\int_M\left\langle \dot{J},  dR_J\otimes d^ch\right\rangle_J \omega_0^{[n]}.
\end{split}
\]
}
In the above sequence of equalities, we have used:
\begin{itemize}
    \item \eqref{ak-variation} for the first line,
    \item the general algebraic identity $\left(\Lambda_{\omega_{0}}\Phi\right)\left(\Lambda_{\omega_{0}}\Psi\right) - \left(\frac{\Phi\wedge \Psi \wedge \omega_{0}^{[n-2]}}{\omega_{0}^{[n]}}\right) = \langle \Phi, \Psi\rangle_{J}$ for  $(1,1)$-forms $\Phi, \Psi$ with respect to $J$, 
\eqref{commuting} and integration by parts on the second line;
\item the Ricci identity $\delta_J \rho_J = -\frac{1}{2}d^c_J \Scal_J$
on the third line; 
\item the Bochner identity (see \cite[Lemma 1.23.10]{gauduchon-book})
\[\delta_J\left(\nabla^{J,-}d^c_J h\right)=\frac{1}{2}\Delta_J( d^c_Jh)-\rho_J(\nabla^J h)\]
on the forth line.
\end{itemize}
We finally consider the term $
\frac{d}{dt_{|_{t=0}}} \int_M h\al_J\wedge\al_J\wedge \omega_0^{[n-2]}$. Using \eqref{variation-alpha} and  that $\al_J$ is a primitive  harmonic form of type $(1,1)$),  we get
\[
\begin{split}
   & \frac{d}{dt} \int_M 2h\al_J\wedge\al_J\wedge \omega_0^{[n-2]} = 4\int_M h\dot{\al}_J\wedge\al_J \wedge \omega_0^{[n-2]}\\
    & = -8 \int_M hd\delta_J\mathbb{G}_J(\al_J\circ J\dot{J})^{\rm skew}\wedge\al_J\wedge \omega_0^{[n-2]}
 =8 \int_M h\left\langle d\delta_J\mathbb{G}_J(\al_JJ\dot{J})^{\rm skew},\al_J\right\rangle_J\omega_0^{[n]}\\
    &= -8 \int_M \left\langle \mathbb{G}_J\delta_J(\al_JJ\dot{J})^{\rm skew},\al_J(\nabla^J h)\right\rangle_J\omega_0^{[n]}
    =8\int_M \left\langle \mathbb{G}_J\delta_J\left( J^{*}(\al_J\dot{J})^{\rm skew}\right),\al_J(\nabla^J h)\right\rangle_J\omega_0^{[n]}\\
    &=-8\int_M \left\langle \mathbb{G}_J J \delta_J \left((\al_J\dot{J})^{\rm skew}\right),\al_J(\nabla^J h)\right\rangle_J\omega_0^{[n]}
    = -8\int_M \left\langle J\mathbb{G}_J \delta_J \left((\al_J\dot{J})^{\rm skew}\right),\al_J(\nabla^J h)\right\rangle_J\omega_0^{[n]}\\
    &={8 \int_M \left\langle \mathbb{G}_J \delta_J \left((\al_J\dot{J})^{\rm skew}\right),\al_J(J \nabla^J h)\right\rangle_J\omega_0^{[n]}} 
    = {8\int_M \left\langle\mathbb{G}_J(\al_J\dot J)^{\rm skew} , \left(\mathcal{L}_{J\nabla^J h} \al_J\right)^{J, -}\right\rangle_J \omega_0^{[n]}} \\
     & = { -8 \int_M \left\langle\mathbb{G}_J(\al_J\dot J)^{\rm skew} , \left(\al_J J (\mathcal{L}_{J\nabla^J h} J)\right)^{\rm skew}\right\rangle_J \omega_0^{[n]}}\\ 
    &=-8\int_M \left\langle\mathbb{G}_J(\al_JJ\dot J)^{\rm skew} , \left(\al_J (- (\mathcal{L}_{J\nabla^J h} J))\right)^{\rm skew}\right\rangle_J \omega_0^{[n]}.
\end{split}
\]
Substituting back the three terms and using that $\dot J$ is $g_{J}$ self-adjoint and anti-commutes with $J$, we  get
\[
\begin{split}
  -2 \left(\frac{d}{dt}_{|_{t=0}}\, \int_M h \boldsymbol{\mu}(J(t))\right) =& \int_M\left\langle\dot{J},(\nabla^{J,-}( Jdh))\right\rangle_J R_J\omega_0^{[n]} {+} 2\int_M\left\langle \delta_J\dot{J} , \delta_J \left(\nabla^{J,-}J dh\right)\right\rangle_J\omega_0^{[n]} \\
  &- 8 \int_M \left\langle \mathbb{G}_J\left(\al_J J \dot J\right)^{\rm skew}, \left(\al_J (-\mathcal{L}_{J\nabla^J h}J)\right)^{\rm skew} \right\rangle_J \omega_0^{[n]} \\
  =& {-\frac{1}{2} \int_M \left\langle J\dot{J}, (-\mathcal{L}_{J\nabla h} J) \right\rangle_J  R_J \, \omega_0^{[n]} -\int_M\left\langle J(\delta_J\dot{J}) , \delta_J\left(-\mathcal{L}_{J\nabla h} J\right)\right\rangle_J\omega_0^{[n]}} \\
  &-8 \int_M \left\langle \mathbb{G}_J\left(\al_J J \dot J\right)^{\rm skew}, \left(\al_J (-\mathcal{L}_{J\nabla^J h}J)\right)^{\rm skew} \right\rangle_J \omega_0^{[n]}.
\end{split}
\]
For the last equality we used the identity (see \cite[Lemma~9.2.1]{gauduchon-book})
\[ - J \mathcal{L}_Z J = 2(\nabla^J Z)^{\rm sym}, \]
applied to the symplectic vector field $Z= J\nabla^J h$, and,  as $J$ is integrable,  
\[\nabla^{J,-}(J dh)=(\nabla^{J}( Jdh))^{\rm sym}= -\frac{1}{2}J\mathcal{L}_{J\nabla^J h} J.\]
\noindent 
The equality \eqref{momentum-property} is thus established. \end{proof}

\bigskip
The ideal $\mathfrak{ham}_{\omega_0} < \mathfrak{sp}_{\omega_0}$ gives rise to a   $\boldsymbol{J}$-invariant distribution on $\mathcal{J}_{\omega_0}$ \[\boldsymbol{D}:= \{ -\mathcal{L}_V J \, | \, V \in \mathfrak{ham}_{\omega_0} + J \mathfrak{ham}_{\omega_0}\}  \subset \boldsymbol{T}(\mathcal{J}_{\omega_0}).\] We can think of $\boldsymbol{D}$ as the complexification of $\mathfrak{ham}_{\omega_0}$ obtained via the infinitesimal holomorphic action of $\mathfrak{ham}_{\omega_0}$ on $(\mathcal{J}_{\omega_0}, \boldsymbol{J})$, i.e we have complex Lie algebra homomorphism \[ \mathfrak{ham}_{\omega_0}\otimes \C \ni f+ ih \longmapsto -\mathcal{L}_{J\nabla^J f} J - J(\mathcal{L}_{J\nabla^J h}J) \in \boldsymbol{D},\] 
where $\boldsymbol{D}$ is endowed with the complex $\boldsymbol{J}$.

\begin{lemma} \label{l:symplectic}  The restriction of $\boldsymbol{\Omega}$ to $\boldsymbol{D}$ is a closed form, i.e \[ \boldsymbol{\Omega}_J(\boldsymbol{X}, \boldsymbol{Y}, \boldsymbol{Z})=0 \qquad \forall \boldsymbol{X}, \boldsymbol{Y}, \boldsymbol{Z} \in \boldsymbol{D}.\] \end{lemma}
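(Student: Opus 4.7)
The plan is to combine the moment-map identity of Lemma~\ref{l:mentum-property} with the evident ${\rm Ham}_{\omega_0}$-invariance of $\boldsymbol{\Omega}$ to obtain, via Cartan's magic formula, that $\iota_{\boldsymbol{X}_h}d\boldsymbol{\Omega}=0$ for every real-hamiltonian generator $\boldsymbol{X}_h=-\mathcal{L}_{X_h}J$ of $\boldsymbol{D}$; then, exploiting that $\boldsymbol{\Omega}$ is of pure type $(1,1)$ with respect to the integrable formal complex structure $\boldsymbol{J}$ on $\mathcal{J}_{\omega_0}$, we promote this partial vanishing to the full identity $d\boldsymbol{\Omega}|_{\boldsymbol{D}}=0$ via a type-decomposition argument.

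For the first step, observe that each of the tensors $\Scal_J,\delta_J,\mathbb{G}_J,\al_J,\langle\cdot,\cdot\rangle_J$ entering \eqref{omega} is natural under diffeomorphisms of $M$; since ${\rm Ham}_{\omega_0}$ preserves $\omega_0$ by definition, $\boldsymbol{\Omega}$ is ${\rm Ham}_{\omega_0}$-invariant and, infinitesimally, $\mathcal{L}_{\boldsymbol{X}_h}\boldsymbol{\Omega}=0$ for every real $h\in C^\infty(M)$. On the other hand, the identity \eqref{momentum-property} of Lemma~\ref{l:mentum-property} reads precisely
\[
\iota_{\boldsymbol{X}_h}\boldsymbol{\Omega} = d\mathbb{H}_h,\qquad \mathbb{H}_h(J):=-\int_M h\,\boldsymbol{\mu}(J),
\]
so Cartan's magic formula gives $\iota_{\boldsymbol{X}_h}d\boldsymbol{\Omega}=\mathcal{L}_{\boldsymbol{X}_h}\boldsymbol{\Omega}-d\iota_{\boldsymbol{X}_h}\boldsymbol{\Omega}=0$; in particular $d\boldsymbol{\Omega}(\boldsymbol{X}_h,\boldsymbol{A},\boldsymbol{B})=0$ for every real $h$ and every pair $\boldsymbol{A},\boldsymbol{B}$ of tangent vectors to $\mathcal{J}_{\omega_0}$.

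For the second step, since $\boldsymbol{\Omega}$ is manifestly $\boldsymbol{J}$-invariant by inspection of \eqref{omega}, it is of pure type $(1,1)$ relative to $\boldsymbol{J}$, so $d\boldsymbol{\Omega}=\partial\boldsymbol{\Omega}+\bar\partial\boldsymbol{\Omega}\in\Omega^{2,1}\oplus\Omega^{1,2}$ carries no $(3,0)$ or $(0,3)$ component. The $\boldsymbol{J}$-eigenspace decomposition $\boldsymbol{D}\otimes\C=\boldsymbol{D}^{1,0}\oplus\boldsymbol{D}^{0,1}$ reads $\boldsymbol{D}^{1,0}=\mathrm{span}_{\C}\{\boldsymbol{X}_h^{1,0}:=\boldsymbol{X}_h-i\boldsymbol{J}\boldsymbol{X}_h\mid h\in C^\infty(M,\R)\}$ with $\boldsymbol{D}^{0,1}=\overline{\boldsymbol{D}^{1,0}}$. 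Feeding two $(1,0)$-vectors into the outcome of the previous step, and using that the $(3,0)$-component of $d\boldsymbol{\Omega}$ vanishes for type reasons, we obtain
\[
0 = d\boldsymbol{\Omega}(\boldsymbol{X}_h,\boldsymbol{X}_{h_1}^{1,0},\boldsymbol{X}_{h_2}^{1,0}) = \tfrac{1}{2}\,\partial\boldsymbol{\Omega}(\boldsymbol{X}_h^{0,1},\boldsymbol{X}_{h_1}^{1,0},\boldsymbol{X}_{h_2}^{1,0}).
\]
As $h,h_1,h_2$ range over real hamiltonians, this gives the vanishing of $\partial\boldsymbol{\Omega}$ on any triple in $\boldsymbol{D}^{1,0}\otimes\boldsymbol{D}^{1,0}\otimes\boldsymbol{D}^{0,1}$ and, by complex conjugation (reality of $\boldsymbol{\Omega}$), the vanishing of $\bar\partial\boldsymbol{\Omega}$ on $\boldsymbol{D}^{1,0}\otimes\boldsymbol{D}^{0,1}\otimes\boldsymbol{D}^{0,1}$, hence $d\boldsymbol{\Omega}|_{\boldsymbol{D}}=0$ as claimed.

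The main technical subtlety lies in this second step: although the invariance-plus-Cartan argument of the first step is formal and immediate, converting it into closedness on the full complex distribution $\boldsymbol{D}$ requires both the formal integrability of $\boldsymbol{J}$ on $\mathcal{J}_{\omega_0}$ (so that the $(p,q)$-decomposition of forms is well-behaved under $d$, placing $d\boldsymbol{\Omega}$ in $\Omega^{2,1}\oplus\Omega^{1,2}$) and the explicit identification of $\boldsymbol{D}^{1,0}$ with the span of $\boldsymbol{X}_h^{1,0}$ for real $h$; both are consequences of the setup of Section~3 but must be combined with care.
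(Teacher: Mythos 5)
Your proof is correct and follows essentially the same route as the paper: first the $\mathrm{Ham}_{\omega_0}$-invariance of $\boldsymbol{\Omega}$ plus the momentum-map identity \eqref{momentum-property} and Cartan's formula give $\iota_{\hat X_h}d\boldsymbol{\Omega}=0$, and then the absence of $(3,0)+(0,3)$ components of $d\boldsymbol{\Omega}$ (from $\boldsymbol{J}$-invariance of $\boldsymbol{\Omega}$ and integrability of $\boldsymbol{J}$) upgrades this to vanishing on all of $\boldsymbol{D}$. The only cosmetic difference is that you carry out the type argument in the complexified eigenspace decomposition $\boldsymbol{D}^{1,0}\oplus\boldsymbol{D}^{0,1}$, whereas the paper uses the equivalent real identity $(d\boldsymbol{\Omega})(\boldsymbol{J}\hat X,\boldsymbol{J}\hat Y,\boldsymbol{J}\hat Z)=\sum(d\boldsymbol{\Omega})(\ldots,\boldsymbol{J}\hat\cdot,\ldots)$ for forms of type $(2,1)+(1,2)$.
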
 \begin{proof} This is a consequence of the momentum map property \eqref{momentum-property}. Indeed, as $\boldsymbol{\Omega}$  is invariant under the action of ${\rm Diff}(M)$, it is in particular ${\rm Ham}_{\omega_0}$ invariant. Thus,  for any $h\in C^{\infty}(M)$ if  $X=-\omega_0^{-1}(dh)$ is the corresponding vector on $M$ and $\hat X (J):= -(\mathcal{L}_{X}J)$ the induced vector field on $\mathcal{J}_{\omega}$, Cartan's formula and \eqref{momentum-property} tel us \[(d\boldsymbol{\Omega})(\hat X, \cdot, \cdot)= d\left(\boldsymbol{\Omega}(\hat X, \cdot)\right) = - d\left(d \langle \boldsymbol{\mu}, h\rangle\right) =0,  \] where $\langle \boldsymbol{\mu}, h\rangle$ stands for the function on $\mathcal{J}_{\omega_0}$ \[ J \longmapsto \int_M \boldsymbol{\mu}_J h \omega_0^{[n]}.\] We thus only need to show that for any hamiltonian vector fields $X, Y, Z \in \mathfrak{ham}_{\omega_0}$,  \[ (d\boldsymbol{\Omega})(\boldsymbol{J}\hat X, \boldsymbol{J}\hat Y, \boldsymbol{J}\hat Z)=0. \] As  $\boldsymbol{\Omega}$ is $\boldsymbol{J}$-invariant and $\boldsymbol{J}$ has zero Nijenhueis tensor, $d\boldsymbol{\Omega}$ is a $3$-form of type $(2,1)+(1,2)$ with respect to $\boldsymbol{J}$: it thus follows that \[(d\boldsymbol{\Omega})(\boldsymbol{J}\hat X, \boldsymbol{J}\hat Y, \boldsymbol{J}\hat Z)= (d\boldsymbol{\Omega})(\boldsymbol{J}\hat X, \hat Y, \hat Z)+(d\boldsymbol{\Omega})(\hat X, \boldsymbol{J}\hat Y, \hat Z)+(d\boldsymbol{\Omega})(\hat X, \hat Y, \boldsymbol{J}\hat Z)=0. \] \end{proof}

\begin{lemma}\label{l:equivariant} The map $\boldsymbol{\mu}: \mathcal{J}_{\omega_0} \to \left(C^{\infty}(M)/\R\right)^*$ is equivariant with respect to the infinitesimal action of $\mathfrak{ham}_{\omega_0}$ on $\mathcal{J}_{\omega_0}$ and its (adjoint) infinitesimal action  on $\left(C^{\infty}(M)/\R\right)^*$  by the $\omega_0$-Poisson bracket, i.e. for any two hamiltonian vector fields $X=-\omega_0^{-1}(df), Y=-\omega_0^{-1}(dh) \in \mathfrak{ham}_{\omega_0}$ the following identity holds
\[ -\boldsymbol{\Omega}_J(\hat X, \hat Y) = d \left(\langle \boldsymbol{\mu}, f \rangle\right)(\hat Y(J)) = \langle \boldsymbol{\mu}(J), \{ f, h \}_{\omega_0}\rangle,\]
where $\hat X (J) := -\mathcal{L}_X J$ and $\hat Y(J):= -\mathcal{L}_Y J$ denote the fundamental vector fields on $\mathcal{J}_{\omega_0}$ induced by $X, Y$, respectively.
\end{lemma}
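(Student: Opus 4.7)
The plan is to derive the first equality directly from Lemma~\ref{l:mentum-property} and to establish the second via a diffeomorphism-equivariance property of the map $\boldsymbol{\mu}$.

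For the first equality, I will apply \eqref{momentum-property} with scalar function $f$ and tangent vector $\dot J = \hat Y(J) = -\mathcal{L}_Y J$. This choice is legitimate: the curve $J(t) := (\phi^Y_t)_* J (\phi^Y_t)_*^{-1}$, generated by the flow of $Y \in \mathfrak{ham}_{\omega_0} \subset \mathfrak{sp}_{\omega_0}$, is a smooth path in $\mathcal{AC}_{\omega_0}$ starting at the integrable $J = J(0) \in \mathcal{J}^+_{\omega_0}$ with velocity $\hat Y(J)$ at $t = 0$. Recognizing $\frac{d}{dt}\big|_{t=0}\int_M f\,\boldsymbol{\mu}(J(t))$ as $d\langle\boldsymbol{\mu}, f\rangle(\hat Y(J))$ then yields $-\boldsymbol{\Omega}_J(\hat X(J), \hat Y(J)) = d\langle\boldsymbol{\mu}, f\rangle(\hat Y(J))$.

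For the second equality, the crucial input will be the ${\rm Symp}(M, \omega_0)$-equivariance
\[ \boldsymbol{\mu}(\phi \cdot J) = (\phi^{-1})^* \boldsymbol{\mu}(J), \qquad \forall\, \phi \in {\rm Symp}(M, \omega_0), \]
which I would verify by observing that any such $\phi$ realizes an isometry $(M, g_J) \to (M, g_{\phi\cdot J})$ fixing $\omega_0$. Consequently, $\Scal_{\phi\cdot J}$, $\rho_{\phi\cdot J}$, $dd^c_{\phi\cdot J}$, and---by uniqueness of the $g_{\phi\cdot J}$-harmonic representative within the fixed class $2\pi \A$---the form $\al_{\phi\cdot J}$ are each obtained from their $J$-counterparts by pullback through $\phi^{-1}$, so every constituent of $\boldsymbol{\mu}$ transforms uniformly. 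Specializing this to the flow $\phi^Y_t$ of $Y$ and using the change-of-variables identity $\int_M f \cdot (\phi_t^{-1})^* \boldsymbol{\mu}(J) = \int_M (\phi_t^* f)\,\boldsymbol{\mu}(J)$, differentiation at $t = 0$ yields
\[ d\langle\boldsymbol{\mu}, f\rangle(\hat Y(J)) = \int_M \mathcal{L}_Y f \cdot \boldsymbol{\mu}(J) = \int_M df(Y)\,\boldsymbol{\mu}(J) = \langle \boldsymbol{\mu}(J), \{f, h\}_{\omega_0}\rangle, \]
under the sign convention $\{f, h\}_{\omega_0} = df(X_h)$ for the Poisson bracket.

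Among the naturality identities invoked above, the equivariance of the harmonic form $\al_J$ is the only one requiring a nontrivial argument: it rests on the uniqueness of the $g_J$-harmonic representative of the fixed deRham class $2\pi \A$ together with the fact that isometries preserve harmonicity. I expect this to be the main, albeit mild, technical ingredient of the proof; the analogous behavior of $\Scal_J$, $\rho_J$, and $dd^c_J$ under isometries is standard.
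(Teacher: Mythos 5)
Your proof is correct and follows essentially the same route as the paper's: the first equality is Lemma~\ref{l:mentum-property} applied with $\dot J=\hat Y(J)$, and the second comes from the ${\rm Diff}$-naturality of $\boldsymbol{\mu}$ (which the paper simply asserts via the $t$-independence of $\int_M(\phi_t^Y)^*(f)\,\boldsymbol{\mu}(J(t))$, and which you verify constituent by constituent, including the harmonic representative $\al_J$) followed by differentiation along the flow of $Y$. The only divergence is the sign of $\tfrac{d}{dt}\big|_{t=0}\langle\boldsymbol{\mu}(J(t)),f\rangle$, which is compensated by your explicitly stated Poisson-bracket convention (opposite to the one the paper implicitly uses), so the final identity is the same.
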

 \begin{proof} The flow $\phi_t^Y$ of $Y$ indices a flow line curve $J(t)= \phi_t^Y \cdot J \in \mathcal{J}_{\omega_0}$. We know that
 \[\int_M (\phi_t^Y)^*(f) \boldsymbol{\mu}(J(t)) = \int_M (\phi_t^Y)^*(f\mu(J))\]
 is independent of $t$. We thus compute (using also \eqref{momentum-property}):
 \[
 \begin{split}
d \left(\langle \boldsymbol{\mu}, f \rangle\right)(\hat Y)_J  &= \frac{d}{dt}_{|_{t=0}} \left(\int_M f \boldsymbol{\mu}(J(t))\right)  =-\int_M \left(\mathcal{L}_Y f\right)\boldsymbol{\mu}(J) \\
&= \int_M\{f, h\}_{\omega_0} \boldsymbol{\mu}(J) = \langle \boldsymbol{\mu}, \{f, h\}_{\omega_0} \rangle. 
 \end{split}
  \]
  The claim now follows by \eqref{momentum-property}.
 \end{proof}

 \begin{proof}[Proof of Theorem~\ref{thm:GIT}]  This is now an immediate corollary of Lemmas~\ref{l:mentum-property}, \ref{l:symplectic} and \ref{l:equivariant}, taking into account that $\boldsymbol{D}=\boldsymbol{T}(\mathcal{J}_{\omega_0})$ under the assumption $H^1(M, \R)=\{0\}$.
 \end{proof}
\begin{rmk} In \cite{DH}, in the case $\A=0$, it is proposed a general method of constructing a closed $(1,1)$-form $\boldsymbol{\Omega}_{DH}$ on $(\mathcal{AC}_{\omega_0}, \boldsymbol{J})$ such that $\boldsymbol{\mu}$ satisfies the momentum map property \eqref{momentum-property} with respect $\boldsymbol{\Omega}_{DH}$. It is not clear from the methods in \cite{DH} whether or not $\boldsymbol{\Omega}_{DH}>0$ on the subspace $\mathcal{AC}^+_{\omega_0}$. On the other hand,  we do not show the closeness of $\boldsymbol{\Omega}$ on the whole space $\mathcal{AC}_{\omega_0}$. What will be important for us  in this paper is that $\boldsymbol{\Omega}$ is positive definite on $(\mathcal{J}^+_{\omega_0}, \boldsymbol{J})$ and satisfies the momentum map and equivariant properties  established in Lemmas~\ref{l:mentum-property} and \ref{l:equivariant}.
\end{rmk}

\section{The Calabi--Lichnerowicz--Matsushima obstruction}
We recall (see e.g. \cite{gauduchon-book}) that the reduced group $\Aut_r(M, J)$ of complex automorphisms of $(M,J)$ is the connected closed subgroup of complex automorphisms,  whose Lie algebra $\mathfrak{h}_{\rm red}(M,J)$ consists of holomorphic vector fields with zeroes.
\begin{prop}\label{p:Calabi-Lichne-Mats} If \eqref{PDEAn} admits a solution $\omega_{\varphi}$, then  the K\"ahler structure $(g_{\varphi}, J, \omega_{\varphi})$ is invariant under a maximal connected compact subgroup $K < \Aut_r(M,J)$ of the reduced group of complex automorphisms of $(M,J)$. Moreover, $\Aut_r(M, J)$ is reductive, i.e. $K^{\C}=\Aut_r(M, J)$.
\end{prop}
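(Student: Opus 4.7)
The plan is to execute a formal Kempf--Ness argument using the momentum map from Theorem~\ref{thm:GIT} together with the positive-definiteness of $\boldsymbol{g}$ on $\mathcal{J}^+_{\omega_0}$. First, by Moser's lemma I may pull back by a diffeomorphism so that the solution $\omega_\varphi$ coincides with the reference form $\omega_0$; then the corresponding integrable complex structure $J_0$ lies in $\mathcal{J}^+_{\omega_0}$ and, by the choice of $\boldsymbol{\mu}$ in Theorem~\ref{thm:GIT}, satisfies $\boldsymbol{\mu}(J_0)=0$ because $\omega_0$ solves \eqref{PDEAn}. For any $Z\in\mathfrak{h}_{\rm red}(M,J)$, standard Hodge theory supplies a complex-valued holomorphy potential $F=f+ih$ with $f,h\in C^\infty(M,\R)$ and $\int_M F\,\omega_0^{[n]}=0$, such that $Z=\nabla^{1,0}F$ and $\Re Z=\tfrac12(\nabla^{J_0}f+X_h)$, where $X_h=-\omega_0^{-1}(dh)$ is the Hamiltonian vector field of $h$.

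The integrability of $J_0$ (vanishing Nijenhuis tensor) gives the identity $\mathcal{L}_{\nabla^{J_0}u}J_0=\boldsymbol{J}\hat{X}_u$ for every smooth $u$, where $\hat{X}_u:=-\mathcal{L}_{X_u}J_0\in\boldsymbol{T}_{J_0}\mathcal{J}_{\omega_0}$. Substituting, the holomorphy condition $\mathcal{L}_Z J_0=0$ (equivalently $\mathcal{L}_{\Re Z}J_0=0$) translates into the tangent-space relation $\hat{X}_h=\boldsymbol{J}\hat{X}_f$. The key Kempf--Ness type computation is then
\[ \boldsymbol{g}_{J_0}(\hat{X}_h,\hat{X}_h)=\boldsymbol{g}_{J_0}(\boldsymbol{J}\hat{X}_f,\hat{X}_h)=\boldsymbol{\Omega}_{J_0}(\hat{X}_f,\hat{X}_h)=-\langle\boldsymbol{\mu}(J_0),\{f,h\}_{\omega_0}\rangle=0, \]
where the first equality uses the relation just derived, the second is the definition of $\boldsymbol{g}$ via \eqref{g}, the third is the equivariance from Lemma~\ref{l:equivariant}, and the last uses $\boldsymbol{\mu}(J_0)=0$. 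Positivity of $\boldsymbol{g}$ on $\mathcal{J}^+_{\omega_0}$ forces $\hat{X}_h=0$, so $X_h$ is a Hamiltonian Killing vector field of $(g_{J_0},J_0,\omega_0)$; re-applying the tangent-space relation gives $\hat{X}_f=0$, whence $X_f$ is Killing as well.

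Consequently, $\Re Z=\tfrac12(-J_0 X_f+X_h)$ is Killing and $Z=\Re Z-iJ_0\Re Z$ is the complexification of a real Hamiltonian Killing field. This proves $\mathfrak{h}_{\rm red}(M,J)=\mathfrak{k}_0^{\C}$, where $\mathfrak{k}_0$ is the Lie algebra of Hamiltonian Killing fields of $(g_{J_0},J_0,\omega_0)$. Hence $\Aut_r(M,J)=K_0^{\C}$ is reductive, with $K_0$ --- the identity component of the Hamiltonian isometry group --- as a maximal connected compact subgroup (since any compact subgroup of a complex reductive group is conjugate into its distinguished maximal compact). The substantive obstacle is not any step within the argument above but rather the positivity of $\boldsymbol{g}$ on $\mathcal{J}^+_{\omega_0}$ on which the Kempf--Ness step depends: this is exactly why the explicit construction of $\boldsymbol{\Omega}$ carried out in the preceding section is needed, and is not a formal consequence of the abstract momentum-map approach of \cite{DH}.
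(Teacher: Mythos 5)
Your proof is correct and follows essentially the same route as the paper: the authors simply outsource the Kempf--Ness computation to the formal momentum-map framework of \cite{LSW,ratiu,ASU}, observing that Lemmas~\ref{l:mentum-property} and \ref{l:equivariant} together with the positivity of $\boldsymbol{g}$ on $\mathcal{J}^+_{\omega_0}$ are exactly the required ingredients, which is precisely the computation you carry out explicitly. Your identification of the positivity of $\boldsymbol{g}$ as the substantive point (rather than a formal consequence of \cite{DH}) also matches the paper's own emphasis.
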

\begin{proof}
The above statement can be derived  from the formal momentum map picture for \[ (\mathcal{J}^+_{\omega_0}, \boldsymbol{\Omega}, \boldsymbol{J}, \mathfrak{ham}_{\omega_0}, \boldsymbol{\mu}), \] (see \cite{LSW,ratiu,ASU}). One delicate point in our situation is that we  did not establish  the closeness of $\boldsymbol{\Omega}$ on the formal Hermitian manifold $(\mathcal{J}^+_{\omega_0}, \boldsymbol{J}, \boldsymbol{\Omega})$, but only of the restriction of $\boldsymbol{\Omega}$ to the $\boldsymbol{J}$-invariant distribution $\boldsymbol{D}$ obtained as the complexification of the infinitesimal action of $\mathfrak{ham}_{\omega_0}$ on $(\mathcal{J}_{\omega_0}, \boldsymbol{J})$ (see Lemma~\ref{l:symplectic}). However, as noticed in \cite[Appendix A]{ASU},  Lemmas~\ref{l:mentum-property} and \ref{l:equivariant} are precisely the ingredients needed for the proof of our claim.
\end{proof}

\begin{cor}\label{c:P^1xP^1} The only solutions of \eqref{PDEA} on $S=\PP^1 \times \PP^1$ are the K\"ahler products of constants scalar curvature metrics on each factor $\PP^1$.
\end{cor}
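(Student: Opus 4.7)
The plan is to combine the Calabi--Lichnerowicz--Matsushima-type rigidity of Proposition~\ref{p:Calabi-Lichne-Mats} with the homogeneity of $\PP^1\times\PP^1$ under its compact automorphism group. First, I would identify the reduced automorphism group as $\Aut_r(\PP^1\times\PP^1)=\mathrm{PSL}(2,\C)\times\mathrm{PSL}(2,\C)$: every holomorphic vector field on the product splits as a sum of holomorphic vector fields on each factor, and any holomorphic vector field on $\PP^1$ has a zero. All maximal connected compact subgroups are conjugate to $K:=\mathrm{SO}(3)\times\mathrm{SO}(3)$, which \emph{acts transitively} on $\PP^1\times\PP^1$.

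Next, applying Proposition~\ref{p:Calabi-Lichne-Mats} to a solution $\omega_\varphi$ of \eqref{PDEA} and pulling $\omega_\varphi$ back by a suitable $\psi\in\Aut_r$---an operation that preserves the K\"ahler class $2\pi\Ome$, the class $2\pi\A$, and (via pullback) the corresponding harmonic representatives, hence also the PDE \eqref{PDEA} itself---I may assume that the identity component of the isometry group of $\omega_\varphi$ contains $K$.

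The final step is to choose the reference K\"ahler metric in the class $2\pi\Ome$ to be the $K$-invariant one, namely $\omega_0 = a\,\pi_1^*\omega_{\mathrm{FS}}+b\,\pi_2^*\omega_{\mathrm{FS}}$ for appropriate $a,b>0$, and to write $\omega_\varphi = \omega_0 + dd^c\varphi$. The $K$-invariance of $\omega_0$ and $\omega_\varphi$ allows me to replace $\varphi$ by its $K$-average without changing $\omega_\varphi$; since $K$ acts transitively, this averaged potential is constant, so $\omega_\varphi = \omega_0$, which is manifestly a product of scaled Fubini--Study metrics on each factor, each being of constant scalar curvature.

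I do not anticipate any substantial obstacle: the corollary is a direct specialization of Proposition~\ref{p:Calabi-Lichne-Mats} to the highly homogeneous geometry of $\PP^1\times\PP^1$. The one minor point to check is that conjugation by an element of $\Aut_r$ preserves the cohomological data entering the PDE, which is automatic because $\Aut_r$ is connected and therefore acts trivially on de~Rham cohomology.
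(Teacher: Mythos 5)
Your argument is correct and follows essentially the same route as the paper, whose proof is the one-line observation that Proposition~\ref{p:Calabi-Lichne-Mats} forces a solution to be $PU(2)\times PU(2)$-invariant; you simply spell out the details (conjugacy of maximal compacts, invariance of the cohomological data under $\Aut_r$, and the averaging/transitivity step showing a $K$-invariant metric in the class is the standard product). These details are all sound, so this is a faithful elaboration of the paper's proof rather than a different approach.
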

\begin{proof} A solution of \eqref{PDEA} on $\PP^1\times \PP^1$ must be $PU(2)\times PU(2)$-invariant by Proposition~\ref{p:Calabi-Lichne-Mats}.
\end{proof}
\section{The Mabuchi functional and Futaki character} In this section,  we fix the underlying complex structure $J=J_0$ on $M$,  and vary the K\"ahler form $\omega_0 \in 2\pi \Ome$  within the deRham K\"ahler class $\Ome$. We thus consider the  spaces 
\[ \mathcal{H}_{\omega_0}:=\{ \varphi \in C^{\infty}(X) \, | \, \omega_0 + dd^c \varphi >0\} \]
of $\omega_0$ relative  K\"ahler potentials, and its quotient by the action of $\R$ via additive constants
\[ \mathcal{K}_{\omega_0}=\{ \omega_\varphi:=\omega_0 + dd^c \varphi >0 \, | \, \varphi \in \mathcal{H}_{\omega_0}\} \cong \mathcal{H}_{\omega_0}/\R. \]  
Both $\mathcal{H}_{\omega_0}$ and $\mathcal{K}_{\omega_0}$ are Fr\'echet manifolds, respectively modelled on the vector spaces $C^{\infty}(M)$ and $C^{\infty}(M)/\R$.   The tangent spaces of $\mathcal{H}_{\omega_0}$ and $\mathcal{K}_{\omega_0}$ are respectively
\[ \boldsymbol{T}_{\varphi} (\mathcal{H}_{\omega_0})= \{ \dot \varphi \in C^{\infty}(X)\}, \qquad  \boldsymbol{T}_{\omega_{\varphi}}(\mathcal{K}_{\omega_0}) = \left\{ \dot\varphi \in C^{\infty}(M) \, \Big| \,  \int_M \dot \varphi \omega_{\varphi}^{[n]} =0\right\}.\]
We introduce a $1$-form  $\boldsymbol{\Theta}_\varphi(\dot\varphi)$ on $\mathcal{H}_{\omega_0}$ by the formula
\[
\boldsymbol{\Theta}_\varphi(\dot{\varphi})=-\int_M\dot{\varphi}\left(-\frac{1}{2}dd^cR_\varphi\wedge\omega_\varphi^{[n-1]}+\al_\varphi\wedge\al_\varphi\wedge \omega_{\varphi}^{[n-2]}+\rho_\varphi\wedge\rho_\varphi\wedge \omega_{\varphi}^{[n-2]}-\frac{c_{\Ome, \A}}{2}\omega_{\varphi}^{[n]}\right).
\]
\begin{rmk} By \eqref{topo-c}, $\boldsymbol{\Theta}$ also defines a $1$-form on $\mathcal{K}_{\omega_0}$.
\end{rmk}
\begin{lemma}\label{l:mabuchi} $\boldsymbol{\Theta}$ is a closed $1$-form on $\mathcal{H}_{\omega_0}$.
\end{lemma}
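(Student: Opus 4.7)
Since $\mathcal{H}_{\omega_0}$ is an open subset of the Fr\'echet vector space $C^\infty(M)$, every tangent vector at $\varphi$ extends canonically to a constant vector field on $\mathcal{H}_{\omega_0}$; the Lie bracket of two such extensions vanishes, so closedness of $\boldsymbol{\Theta}$ is equivalent to the symmetry
\[
Q_\varphi(\psi_1,\psi_2):=\frac{d}{ds}\bigg|_{s=0}\boldsymbol{\Theta}_{\varphi+s\psi_1}(\psi_2)=Q_\varphi(\psi_2,\psi_1)
\]
for every $\varphi\in\mathcal{H}_{\omega_0}$ and every $\psi_1,\psi_2\in C^\infty(M)$. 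The plan is to establish this symmetry by a direct variational computation that runs in complete parallel with the proof of Lemma~\ref{l:mentum-property}, but now with the complex structure $J=J_0$ held fixed and the K\"ahler form varying along $\omega_s:=\omega_\varphi+s\,dd^c\psi_1$.

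The required infinitesimal variations are standard for $\omega_s^{[n]}$ and $\dot\rho_s=-\tfrac12 dd^c(\Delta_\varphi\psi_1)$, while $\dot\Scal_s$ is the familiar Lichnerowicz-type expression. For $\dot\al_s$ I would proceed exactly as in the derivation of \eqref{variation-alpha}: since $\al_s$ remains $d$-closed in the fixed class $2\pi\A$, $\dot\al_s$ is $d$-exact, and imposing $\omega_s$-co-closedness via the commutation relations \eqref{commuting} yields an explicit formula $\dot\al_s=-d\delta_\varphi\mathbb{G}_\varphi(\al_\varphi\cdot dd^c\psi_1)^{\mathrm{skew}}$ involving the $\omega_\varphi$-Green operator on $2$-forms. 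Substituting these into the four terms defining $\boldsymbol{\Theta}_{\varphi+s\psi_1}(\psi_2)$ and repeatedly integrating by parts, with the aid of the Ricci identity $\delta_\varphi\rho_\varphi=-\tfrac12 d^c\Scal_\varphi$ and the Bochner formula used in Lemma~\ref{l:mentum-property}, will recast $Q_\varphi(\psi_1,\psi_2)$ as the pairing of a formally self-adjoint differential operator acting on $\psi_1$ against $\psi_2$, plus a Green-operator bilinear form of the shape $\langle\mathbb{G}_\varphi(\al_\varphi\cdot dd^c\psi_1)^{\mathrm{skew}},(\al_\varphi\cdot dd^c\psi_2)^{\mathrm{skew}}\rangle_\varphi$, which is manifestly symmetric.

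A conceptually cleaner alternative, available on the open subset $\mathcal{H}^+_{\omega_0}:=\{\varphi:\Scal_\varphi>0\}$ (nonempty whenever we are near a solution of \eqref{PDEB}), would be to invoke Moser's trick: choose smoothly a family of diffeomorphisms $\sigma_\varphi$ with $\sigma_\varphi^*\omega_\varphi=\omega_0$, and set $J_\varphi:=\sigma_\varphi^*J_0\in\mathcal{J}^+_{\omega_0}$. The intertwinings $\Scal_{J_\varphi}=\Scal_\varphi\circ\sigma_\varphi$, $\rho_{J_\varphi}=\sigma_\varphi^*\rho_\varphi$, $\al_{J_\varphi}=\sigma_\varphi^*\al_\varphi$ give the identification $\boldsymbol{\Theta}_\varphi(\dot\varphi)=\langle\boldsymbol{\mu}(J_\varphi),\dot\varphi\circ\sigma_\varphi\rangle_{\omega_0}$, after which closedness on $\mathcal{H}^+_{\omega_0}$ follows from the momentum map property (Lemma~\ref{l:mentum-property}) combined with the closedness of the restriction of $\boldsymbol{\Omega}$ to the $\boldsymbol{J}$-invariant distribution generated by the infinitesimal action of $\mathfrak{ham}_{\omega_0}$ (Lemma~\ref{l:symplectic}); continuity of the smooth $2$-form $d\boldsymbol{\Theta}$ then extends the conclusion to all of $\mathcal{H}_{\omega_0}$. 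In either strategy, the main obstacle is the same: handling the variation of the harmonic form $\al_\varphi$ and verifying that its non-local Green-operator contributions pair symmetrically against the local Lichnerowicz-type contributions coming from the $\Scal$- and $\rho^2$-terms; organizing the calculation so as to mirror the structure of the formal K\"ahler metric $\boldsymbol{g}$ in \eqref{g} is what keeps the bookkeeping tractable.
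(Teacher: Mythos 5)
Your second strategy is essentially the paper's proof: one reduces closedness to the symmetry of $\boldsymbol{V}_{\dot\varphi}\cdot\boldsymbol{\Theta}(\boldsymbol{V}_{\dot\psi})$ in $\dot\varphi,\dot\psi$, applies the Moser lift $\phi_t$ with $\phi_t^*\omega_{\varphi(t)}=\omega$ so that $J(t)=(\phi_t^{-1})_*J(\phi_t)_*$ moves in $\mathcal{J}_{\omega}$ with $\dot J=J\mathcal{L}_{J\nabla\dot\varphi}J$, and then invokes the momentum map identity \eqref{momentum-property}. The paper's computation \eqref{variational-formula} yields
\[
-\tfrac{d}{dt}\big|_{t=0}\boldsymbol{\Theta}_{\varphi(t)}(\dot\psi)=-\int_M\langle d\dot\psi,d\dot\varphi\rangle_\omega\,\big(\cdots\big)\;-\;\boldsymbol{g}_{\omega,J}\big(\mathcal{L}_{J\nabla\dot\psi}J,\ \mathcal{L}_{J\nabla\dot\varphi}J\big),
\]
which is manifestly symmetric. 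The point you are missing is that the only property of the formal structure used here is the \emph{symmetry} of the bilinear form $\boldsymbol{g}$ in \eqref{g} (equivalently, the $\boldsymbol{J}$-invariance of $\boldsymbol{\Omega}$), not its positive-definiteness and not Lemma~\ref{l:symplectic}. Consequently your restriction to $\mathcal{H}^+_{\omega_0}=\{\Scal_\varphi>0\}$ is unnecessary, and the device you propose to remove it is a genuine error: $\mathcal{H}^+_{\omega_0}$ is an \emph{open} subset of $\mathcal{H}_{\omega_0}$, and the vanishing of the continuous $2$-form $d\boldsymbol{\Theta}$ on an open subset does not propagate by continuity to the whole connected space (it would only extend to the closure). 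Also beware that $\boldsymbol{\Theta}_\varphi(\dot\varphi)=\langle\boldsymbol{\mu}(J_\varphi),\dot\varphi\circ\sigma_\varphi\rangle$ is not the pullback of a fixed linear functional of $\boldsymbol{\mu}$, since the test function $\dot\varphi\circ\sigma_\varphi$ itself varies with $\varphi$; this is exactly why the extra term $\int_M(\dot h-\langle dh,d\dot\varphi\rangle)(\cdots)$ appears in \eqref{variational-formula} and must be checked to be symmetric separately, a step your formal ``pullback of a closed form'' gloss skips.

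Your first strategy (direct variation at fixed $J$, varying $\omega$) is only a plan: the entire difficulty of the lemma is concentrated in the step you defer, namely deriving the correct variation of the harmonic representative $\al_\varphi$ in the varying-metric picture and verifying that its non-local Green-operator contribution pairs symmetrically against everything else. The formula you guess, $\dot\al_s=-d\delta_\varphi\mathbb{G}_\varphi(\al_\varphi\cdot dd^c\psi_1)^{\rm skew}$, is not quite the right analogue of \eqref{variation-alpha}: transported through Moser, the tensor entering is $(\al_\varphi\circ\nabla^{\varphi,-}d\psi_1)^{\rm skew}$ (the $J$-anti-invariant part of the Hessian, as in Proposition~\ref{prop:linearization}), not $\al_\varphi\cdot dd^c\psi_1$. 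If you intend to follow this route you must actually carry out that computation; as written it asserts the conclusion rather than proving it. The cleanest fix is to adopt the paper's route without the positivity detour: derive \eqref{variational-formula} from Lemma~\ref{l:mentum-property} and read off the symmetry directly from the expression for $\boldsymbol{g}$.
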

\begin{proof} For any $\dot \varphi \in C^{\infty}(X)$, we consider the constant vector field $\boldsymbol{V}_{\dot\varphi}$ on $\mathcal{H}_{\omega_0}$, defined by
\[ \boldsymbol{V}_{\dot{\varphi}}(\varphi):= \dot \varphi.\]
The vector field $\boldsymbol{V}_{\dot \varphi}$ admits a (local) flow: the curve $\varphi(t) =\varphi + t \dot \varphi$ is a flow curve of $\boldsymbol{V}_{\dot \varphi}$ through $\varphi$. It follows that constant vector fields mutually commute.  As they also generate each tangent space $\boldsymbol{T}_{\varphi}(\mathcal{H}_{\omega_0})$,   the closeness of $\boldsymbol{\Theta}$ then reads as
\begin{equation}\label{closenness} \boldsymbol{V}_{\dot{\varphi}} \cdot \boldsymbol{\Theta}(\boldsymbol{V}_{\dot \psi}) = \boldsymbol{V}_{\dot{\psi}} \cdot \boldsymbol{\Theta}(\boldsymbol{V}_{\dot \varphi}), \qquad \forall \, \dot \varphi, \dot\psi \in C^{\infty}(X), \end{equation}
an identity we are going to check in what follows. Notice that
\[ 
\begin{split}
\boldsymbol{V}_{\dot \varphi} \cdot \boldsymbol{\Theta}(\boldsymbol{V}_{\dot\psi}) &= \frac{d}{dt}_{|_{t=0}} \boldsymbol{\Theta}_{\varphi(t)}(\dot\psi)\\
&= -\frac{d}{dt}_{|_{t=0}}\int_M \dot\psi \left(\frac{1}{2}\left(\Delta_{\varphi(t)}\Scal_{\varphi(t)}\right) \omega_{\varphi(t)}^{[n]} + \rho_{\varphi(t)}^2\wedge \omega_{\varphi(t)}^{[n-2]}+\al_{\varphi(t)}^2\wedge  \omega_{\varphi(t)}^{[n-2]}- \frac{c_{\Ome, \A}}{2}\omega_{\varphi(t)}^{[n]}\right). \end{split}\]
We thus need to rewrite the above quantity as a symmetric expression in $\dot \psi$ and $\dot \varphi$.  As we shall recall below, this is a corollary of the momentum map property of $\boldsymbol{\mu}$ that we have proved in Lemma~\ref{l:mentum-property}.

Indeed, let $\omega_{\varphi(t)}$ be the smooth path of K\"ahler metrics in $\mathcal{K}_{\omega_0}$ with $\omega_{\varphi(0)}=\omega_{\varphi}$,  and $h(t)$ a smooth path of functions. We shall compute below the quantity $\frac{d}{dt}_{|_{t=0}} \boldsymbol{\Theta}_{\varphi(t)}(h(t))$.

To lighten the notation, we denote $\omega:=\omega_{\varphi}$. By Moser's lemma, there is an isotopy of diffeomorphisms $\phi_t$, defined by the time dependent vector field 
\[ Z_t =-J\omega_{ \varphi(t)}^{-1}(d \dot\varphi (t))= -(\nabla^{\varphi(t)} \dot \varphi(t)),\] such that $\phi_t^*(\omega_{\varphi(t)}) =\omega$.  It follows that  $J(t):= (\phi_t^{-1})_* J (\phi_t)_* \in \mathcal{J}_{\omega}$. Furthermore (see \cite[Sec.~ 4.6]{gauduchon-book}):
\[ \dot{J} = J\mathcal{L}_{J\nabla^J \dot \varphi} J.\]
By the invariance of $\boldsymbol{\Theta}$ under symplectomorphsms and using \eqref{momentum-property}, it  follows that 
\begin{equation}\label{variational-formula}
\begin{split} &-\frac{d}{dt}_{|_{t=0}} \boldsymbol{\Theta}_{\varphi(t)}(h(t))\\
&=\frac{d}{dt}_{|_{t=0}}\int_M h(t) \left(\frac{1}{2}\left(\Delta_{\varphi(t)}\Scal_{\varphi(t)}\right) \omega_{\varphi(t)}^{[n]} + \rho_{\varphi(t)}^2\wedge \omega_{\varphi(t)}^{[n-2]}+\al_{\varphi(t)}^2\wedge  \omega_{\varphi(t)}^{[n-2]}-\frac{c_{\Ome, \A}}{2}\omega_{\phi(t)}^{[n]}\right) \\
&= \frac{d}{dt}_{|_{t=0}}\int_M \phi_t^*(h(t))\left(\frac{1}{2}\left(\Delta_{J(t)}\Scal_{J(t)}\right) \omega^{[n]} + \rho_{J(t)}^2\wedge \omega^{[n-2]}+\al_{J(t)}^2\wedge  \omega^{[n-2]}-\frac{c_{\Ome, \A}}{2}\omega^{[n]}\right) \\
&=\int_M \left(\dot{h}-\left\langle dh, d\dot \varphi\right\rangle_J\right)\left(\frac{1}{2}\left(\Delta_{J}\Scal_{J}\right) \omega^{[n]} + \rho_{J}^2\wedge \omega^{[n-2]}+\al_{J}^2\wedge  \omega^{[n-2]} -\frac{c_{\Ome, \A}}{2}\omega^{[n]}\right)  \\
& \, \, \, \, \, \, \, \, -\boldsymbol{\Omega}_{J}((\mathcal{L}_{J \nabla h}J), J(\mathcal{L}_{J\nabla \dot \varphi} J)) \\
&=\int_M \left(\dot h -\left\langle dh, d\dot \varphi\right\rangle_\omega\right)\left(\frac{1}{2}\left(\Delta_{\omega}\Scal_{\omega}\right) \omega^{[n]} + \rho_{\omega}^2\wedge \omega^{[n-2]}+\al_{\omega}^2\wedge  \omega^{[n-2]} -\frac{c_{\Ome, \A}}{2}\omega^{[n]}\right)  \\
& \, \, \, \, \, \, \, \, -\boldsymbol{g}_{\omega,J}((\mathcal{L}_{J \nabla^\omega h}J), (\mathcal{L}_{J\nabla^{\omega} \dot \varphi} J)).
\end{split} 
\end{equation}
In the case when $h(t)= \dot\psi$ is independent of $t$, we get an expression
which is manifestly symmetric in $\dot\psi$ and $\dot\varphi$. \end{proof}

As $\mathcal{H}_{\omega_0}$ is a contractible space,  we can find a primitive of
$\boldsymbol{\Theta}$:
\begin{defn}[Mabuchi energy]\label{d:Mabuchi} There exists a functional $\mathcal{E}:\mathcal{H}_{\omega_0}\to\R$  such that
\[
(D_{\varphi}\mathcal{E})(\dot{\varphi})=-\int_M\dot{\varphi}\left(-\frac{1}{2}\left(dd^cR_\varphi\right)\wedge\omega_\varphi^{[n-1]}+\al_\varphi^2\wedge\omega_\varphi^{[n-2]}+\rho_\varphi^2\wedge\omega_\varphi^{[n-2]}-\frac{c_{\Ome, \A}}{2}\omega_{\varphi}^{[n]}\right), \qquad \mathcal{E}(0)=0,
\]
which we call the \emph{Mabuchi functional} associated to \eqref{PDEAn}.
Furthermore, by \eqref{topo-c}, 
\[ \mathcal{E}(\varphi+c)= \mathcal{E}(\varphi),\] i.e $\mathcal{E}$ is also defined on the space $\mathcal{K}_{\omega_0}$ of K\"ahler metics in $2\pi\Ome$.   The critical points of $\mathcal{E}$ are the K\"ahler metrics verifying \eqref{PDEAn}.
\end{defn}

\subsection{The Futaki character}
The existence of a well-defined Mabuchi functional associated to \eqref{PDEAn} yields the definition of a character
\[ \mathcal{F} : \mathfrak{h}(M, J) \to \R \]
defined on the Lie algebra $\mathfrak{h}(M, J)$ of (real) holomorphic vector fields on $Z$ as follows:
If $Z\in C^{\infty}(M, TM)$ is such a vector field, its flow $\phi_t^Z$ preserves the complex structure and the K\"ahler class $\Ome$ on $X$. It follows that $Z$ gives rise to a vector field $\boldsymbol{V}_Z$ on the space of K\"ahler metrics in $2\pi \Ome$, with a flow line $(\phi_t^Z)^* \omega_{\varphi}$ at $\omega_{\varphi}\in 2\pi \Ome$; this can be lifted to a vector field (still denoted $\boldsymbol{V}_Z$) on $\mathcal{K}_{\omega_0}$, such that
\[ \frac{d}{dt}_{|_{t=0}} (\phi_t^Z)^*(\omega_\varphi) = dd^c h_{\varphi},  \qquad \int_M h_{\varphi} \omega_{\varphi}^{[n]}=0,\]
where $h_\varphi$ is the so-called \emph{real holomorphy potential} of $Z$ with respect to $\omega_\varphi$, see \cite[Lemma~2.1.2]{gauduchon-book}.
We then let
\begin{equation}\label{e:Futaki} \begin{split}
\mathcal{F}_{\varphi}(Z)&:=  \boldsymbol{\Theta}(\boldsymbol{V}_Z)(\varphi)\\
&= -\int_M h_{\varphi} \left( -\frac{1}{2}(dd^cR_\varphi)\wedge\omega_{\varphi}^{[n-1]}+\al_{\varphi}^2\wedge\omega_{\varphi}^{[n-2]}+\rho_{\varphi}^2\wedge\omega_{\varphi}^{[n-2]}-\frac{c_{\Ome, \A}}{2}\omega_{\varphi}^{[n]}\right).
\end{split}
\end{equation}
\begin{prop}\label{p:Futaki} The expression \eqref{e:Futaki} is independent of $\omega_\varphi \in \mathcal{K}_{\omega_0}$ and thus defines a linear map
$ \mathcal{F} : \mathfrak{h} \to \R$,  satisfying
\[ \mathcal{F}([Z, Y])=0 \qquad \forall Z, Y \in \mathfrak{h}.\]
Furthermore, $\mathcal{F} \equiv 0$ should \eqref{PDEAn} admit a solution.
\end{prop}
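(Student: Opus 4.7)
The plan is to deduce the three assertions in sequence from Lemma~\ref{l:mabuchi} and the variational formula \eqref{variational-formula}. Denote by
\[
\mu_\varphi := -\tfrac{1}{2}(dd^c \Scal_\varphi)\wedge\omega_\varphi^{[n-1]} + \al_\varphi^2\wedge\omega_\varphi^{[n-2]} + \rho_\varphi^2\wedge\omega_\varphi^{[n-2]} - \tfrac{c_{\Ome,\A}}{2}\omega_\varphi^{[n]}
\]
the obstruction $(n,n)$-form in \eqref{e:Futaki}, so that $\mathcal{F}_\varphi(Z) = -\int_M h_\varphi\, \mu_\varphi$; the topological choice \eqref{topo-c} guarantees $\int_M \mu_\varphi=0$.

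For the independence of $\mathcal{F}_\varphi(Z)$ from $\varphi$, I would vary $\varphi(t)=\varphi+t\dot\varphi$ for an arbitrary $\dot\varphi\in C^\infty(M)$ and compute $\frac{d}{dt}_{|_{t=0}} \mathcal{F}_{\varphi(t)}(Z)$ by applying \eqref{variational-formula} with $h(t)=h_{\varphi(t)}$, the real holomorphy potential of $Z$ along the path. Since $Z$ is real holomorphic, in the Gauduchon convention $Z=J\nabla^\omega h_\varphi$ (modulo a Hamiltonian Killing contribution that does not affect the argument) and the condition $\mathcal{L}_Z J=0$ forces $\mathcal{L}_{J\nabla^\omega h_\varphi}J=0$; the $\boldsymbol{g}_{\omega,J}$-term on the RHS of \eqref{variational-formula} therefore vanishes. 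Separately, differentiating the defining identity $\mathcal{L}_Z\omega_{\varphi(t)}=dd^c h_{\varphi(t)}$ at $t=0$ and using $[\mathcal{L}_Z,dd^c]=0$ identifies $\dot h_\varphi - \langle dh_\varphi,d\dot\varphi\rangle_\omega$ with a pluriharmonic, hence constant, function on $M$. Pairing this constant against the mean-zero form $\mu_\omega$ produces zero, so $(D_\varphi\mathcal{F}(Z))(\dot\varphi)=0$.

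Once $\mathcal{F}$ is $\varphi$-independent, the bracket identity follows from equivariance: the definition \eqref{e:Futaki} is manifestly invariant under $\phi\in\Aut_r(M,J)$ acting simultaneously by substituting $\phi_*^{-1}Z$ for $Z$ and $\phi^*\omega_\varphi$ for $\omega_\varphi$, because $h_\varphi\circ\phi$ is the real holomorphy potential of $\phi_*^{-1}Z$ with respect to $\phi^*\omega_\varphi$. Thus $\mathcal{F}(\phi_*^{-1}Z)=\mathcal{F}(Z)$; taking $\phi=\phi_t^Y$ for $Y\in\mathfrak{h}$ and differentiating at $t=0$ yields $\mathcal{F}([Y,Z])=0$. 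Finally, if $\omega_\varphi$ solves \eqref{PDEAn}, then $\mu_\varphi\equiv 0$ and \eqref{e:Futaki} immediately gives $\mathcal{F}_\varphi\equiv 0$; by the independence just established, $\mathcal{F}\equiv 0$ on $\mathfrak{h}$.

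The main technical point is verifying that $\dot h_\varphi-\langle dh_\varphi,d\dot\varphi\rangle_\omega$ is constant in the precise conventions used for the holomorphy potential, which requires careful bookkeeping of the Hamiltonian/holomorphic decomposition of $Z$; once this is pinned down, everything else reduces to a formal manipulation based on the momentum map picture of Theorem~\ref{thm:GIT} and the biholomorphic invariance of the Mabuchi $1$-form. Note that one could instead try to deduce the first two assertions abstractly from Theorem~\ref{thm:GIT}, but the direct argument above avoids the subtlety that $\boldsymbol{\Omega}$ has only been shown to be closed along the $\boldsymbol{J}$-invariant distribution $\boldsymbol{D}$ rather than on all of $\mathcal{J}^+_{\omega_0}$.
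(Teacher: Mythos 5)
Your second and third assertions are fine: the equivariance argument $\mathcal{F}(\phi_{t*}^{-1}Z)=\mathcal{F}(Z)$ differentiated along the flow of $Y$ is a legitimate (and standard) substitute for the paper's computation of $d\boldsymbol{\Theta}(\boldsymbol{V}_Z,\boldsymbol{V}_Y)$, and the vanishing at a solution is immediate. The problem is the first and main step. From $\mathcal{L}_Z\omega_\varphi=dd^ch_\varphi$ one reads off that $\nabla^{\varphi}h_\varphi$ is the \emph{gradient} part of $Z$ in the decomposition $g_\varphi(Z)=\psi_H^Z+d^cf^Z_\varphi+dh^Z_\varphi$ used in the proof of Proposition~\ref{p:CLM-full}; it is not true that $Z=J\nabla^{\varphi}h_\varphi$ modulo a Hamiltonian Killing field (that decomposition would force $\mathcal{L}_Z\omega_\varphi=0$, contradicting the definition of $h_\varphi$). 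Consequently $\mathcal{L}_{J\nabla^{\varphi}h_\varphi}J=\mathcal{L}_{JZ}J+\mathcal{L}_{\nabla f^Z_\varphi}J-\mathcal{L}_{Jg_\varphi^{-1}\psi_H^Z}J$ does \emph{not} vanish for a general $\omega_\varphi\in\mathcal{K}_{\omega_0}$: its vanishing is exactly the conclusion of Proposition~\ref{p:CLM-full}, which the paper derives \emph{from} the hypothesis that $\omega_\varphi$ solves \eqref{PDEAn} via $0=\|\mathcal{L}_{J\nabla h^Z}J\|^2_{\boldsymbol{g}}$. Your other claim fails for the same reason: differentiating $\mathcal{L}_Z\omega_{\varphi(t)}=dd^ch_{\varphi(t)}$ gives $dd^c\bigl(\dot h_\varphi-Z(\dot\varphi)\bigr)=0$, hence $\dot h_\varphi-Z(\dot\varphi)=\mathrm{const}$, and $Z(\dot\varphi)=\langle dh_\varphi,d\dot\varphi\rangle_\varphi$ only when the Hamiltonian and harmonic parts of $Z$ vanish. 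For a generic holomorphic $Z$ and generic $\omega_\varphi$ both error terms in \eqref{variational-formula} are nonzero, and your argument would need to show they cancel --- a genuine computation that is not carried out and is not a matter of ``bookkeeping.''

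The paper sidesteps all of this: since $\boldsymbol{\Theta}$ is a closed $1$-form on $\mathcal{H}_{\omega_0}$ (Lemma~\ref{l:mabuchi}) and is invariant under the flow of $\boldsymbol{V}_Z$ by naturality under ${\rm Diff}(M)$, Cartan's formula gives $0=\mathcal{L}_{\boldsymbol{V}_Z}\boldsymbol{\Theta}=d\bigl(\boldsymbol{\Theta}(\boldsymbol{V}_Z)\bigr)$, so $\mathcal{F}_\varphi(Z)=\boldsymbol{\Theta}(\boldsymbol{V}_Z)(\varphi)$ is constant with no analysis of how $h_\varphi$ varies. I would either adopt that argument for the first step (your treatment of the remaining two assertions then goes through), or, if you insist on the direct route, prove the precise variation formula for $h_\varphi$ and exhibit the cancellation against the $\boldsymbol{g}_{\omega,J}$-term.
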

\begin{proof} We  first show that the function
\[ \varphi \to \boldsymbol{\Theta}(\boldsymbol{V}_Z)(\varphi)\]
is constant. 
Notice that the $1$-form $\boldsymbol{\Theta}$ is invariant under (the flow of) $\boldsymbol{V}_Z$,  as it is induced by the action of ${\rm Diff}(X)$ on the entire structure;  as $\boldsymbol{\Theta}$ is a closed $1$-form (see Lemma~\ref{l:mabuchi}), it follows by Cartan's formula that $\boldsymbol{\Theta}(\boldsymbol{V}_Z)(\varphi)$ is constant.

If $Y\in \mathfrak{h}$ is another real holomorphic vector field, inducing a vector field $\boldsymbol{V}_Y$ on $\mathcal{K}_{\omega_0}$, then
\[ \mathcal{F}([Z,Y])= \boldsymbol{\Theta}([\boldsymbol{V}_Z, \boldsymbol{V}_Y]) = -(d \boldsymbol{\Theta})(\boldsymbol{V}_Z, \boldsymbol{V}_Y) - \boldsymbol{V}_Y \cdot \boldsymbol{\Theta}(\boldsymbol{V}_Z) + \boldsymbol{V}_Z \cdot \boldsymbol{\Theta}(\boldsymbol{V}_Y) =0,\]
as claimed.

Finally, if $\omega_{\varphi}\in 2\pi \Ome$ solves \eqref{PDEAn}, then clearly $\mathcal{F}=\mathcal{F}_{\varphi} \equiv 0$.
\end{proof}
\begin{defn}[Futaki invariant]\label{d:futaki} The character
\[ \mathcal{F} : \mathfrak{h}(M,J) \to \R \]
defined by \eqref{e:Futaki} is called the \emph{Futaki invariant}  associated to \eqref{PDEAn}. The condition $\mathcal{F} \equiv 0$  is necessary for \eqref{PDEAn} to admit a solution.
\end{defn}

\subsection{The Calabi--Lichnerowicz--Matsushima obstruction revisited}
The computation \eqref{variational-formula} allows us to extend Proposition~\ref{p:Calabi-Lichne-Mats} to the full automorphism group.
\begin{prop}\label{p:CLM-full} If \eqref{PDEAn} admits a solution $\omega$, then the Lie algebra of holomorphic $\mathfrak{h}(M,J)$ vector fields of $(M, J)$ decomposes as
\[\mathfrak{h}(M, J) = \mathfrak{a} \oplus \mathfrak{k} \oplus J \mathfrak{k}, \]
where $\mathfrak{a}$ is the abelian Lie algebra of $\omega$-parallel (Killing) vector fields,   $\mathfrak{k}={\rm Lie}(K)$ is the Lie algebra of hamiltonian Killing vector fields of $\omega$,  and $\mathfrak{h}_{\rm red}(M,J)= \mathfrak{k}\oplus J\mathfrak{k}$ is the ideal of holomorphic vector fields with zeroes of $(M,J)$. In particular, $\Aut(M, J)$ is reductive and the connected component of identity of the  K\"ahler isometry group $G$ of $\omega$ is a maximal compact subgroup of $\Aut_0(M, J)$.
\end{prop}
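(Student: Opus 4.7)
The plan is to extend the reductive decomposition of Proposition~\ref{p:Calabi-Lichne-Mats} to all of $\mathfrak{h}(M,J)$ by combining it with the classical Hodge-theoretic splitting of holomorphic vector fields on a compact K\"ahler manifold. Specifically, attached to the K\"ahler metric $\omega$, Hodge theory applied to the $\bar\partial$-closed $(0,1)$-form naturally associated to the $(1,0)$-part $Z^{1,0}$ of any real $Z\in\mathfrak{h}(M,J)$ (see e.g.~\cite[Sec.~2.2]{gauduchon-book}) produces a harmonic part---corresponding to a $g_\omega$-parallel $(1,0)$-vector field---and a $\bar\partial$-exact part whose dual has a real holomorphy potential. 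Taking real parts yields the direct-sum splitting
\[ \mathfrak{h}(M,J)=\mathfrak{a}\oplus\mathfrak{h}_{\mathrm{red}}(M,J),\]
where $\mathfrak{a}$ is the $J$-invariant abelian subalgebra of $\omega$-parallel (hence Killing) vector fields, and $\mathfrak{a}\cap\mathfrak{h}_{\mathrm{red}}=0$ since a nonzero parallel vector field on a compact manifold has constant nonzero length.

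Inserting the decomposition $\mathfrak{h}_{\mathrm{red}}(M,J)=\mathfrak{k}\oplus J\mathfrak{k}$ from Proposition~\ref{p:Calabi-Lichne-Mats} then gives the claimed splitting $\mathfrak{h}(M,J)=\mathfrak{a}\oplus\mathfrak{k}\oplus J\mathfrak{k}$. The flow of any $X\in\mathfrak{a}$ consists of biholomorphisms and $\omega$-isometries; by the uniqueness of the maximal compact subgroup $K<\Aut_r(M,J)$ supplied by Proposition~\ref{p:Calabi-Lichne-Mats}, this flow normalizes $K$, whence $[\mathfrak{a},\mathfrak{k}]\subset\mathfrak{k}$. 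Consequently $\mathfrak{g}:=\mathfrak{a}\oplus\mathfrak{k}$ is a Lie subalgebra consisting entirely of $\omega$-Killing vector fields, and equals the Lie algebra of $G_0$, the connected component of the $\omega$-isometry group of $(M,J)$.

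Finally, since $J\mathfrak{a}\subset\mathfrak{a}$ (the parallelism of $J$ on a K\"ahler manifold forces $JX$ to be parallel whenever $X$ is), one obtains $\mathfrak{h}(M,J)=\mathfrak{g}\oplus J\mathfrak{g}$. Thus $\mathfrak{h}(M,J)$ is the complexification of the compact real Lie algebra $\mathfrak{g}$, so $\Aut_0(M,J)$ is reductive and $G_0$ is a maximal compact subgroup. The main obstacle in this plan is confirming that the metric-dependent Hodge splitting $\mathfrak{h}(M,J)=\mathfrak{a}\oplus\mathfrak{h}_{\mathrm{red}}$ is genuinely compatible with the Lie-algebraic structure furnished by Proposition~\ref{p:Calabi-Lichne-Mats}, in particular the inclusion $[\mathfrak{a},\mathfrak{k}]\subset\mathfrak{k}$; this rests on the observation that $\mathfrak{a}$ exponentiates to a torus of $\omega$-isometric biholomorphisms preserving both the intrinsic ideal $\mathfrak{h}_{\mathrm{red}}$ and the maximal compact subgroup $K\subset\Aut_r(M,J)$.
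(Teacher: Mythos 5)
Your proposal has a genuine gap at its very first step. The splitting $\mathfrak{h}(M,J)=\mathfrak{a}\oplus\mathfrak{h}_{\rm red}(M,J)$, with $\mathfrak{a}$ the algebra of $\omega$-parallel fields, is \emph{not} a consequence of Hodge theory for an arbitrary K\"ahler metric --- it is precisely the Matsushima--Lichnerowicz-type statement that has to be \emph{proved} using the hypothesis that $\omega$ solves \eqref{PDEAn}. What Hodge theory gives for free (cf.\ \cite[Lemma~2.2.1]{gauduchon-book}) is only the decomposition of the dual $1$-form, $\alpha^Z=\psi_H^Z+d^cf^Z+dh^Z$; for a general K\"ahler metric neither $g^{-1}(\psi_H^Z)$ nor $\nabla f^Z+J\nabla h^Z$ is holomorphic, so this does not split $\mathfrak{h}$ into subalgebras at all. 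A clean counterexample to your claimed ``classical'' splitting: on a complex torus with a generic (non-flat) K\"ahler metric one has $\mathfrak{h}\cong\C^n\neq 0$ while $\mathfrak{h}_{\rm red}=0$ and there are no parallel vector fields, so $\mathfrak{h}\neq\mathfrak{a}\oplus\mathfrak{h}_{\rm red}$. By assuming the splitting you have skipped the entire analytic content of the proposition.

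The paper's proof supplies exactly the missing step. Acting on the solution $\omega$ by the flow $\phi_t^Z$ produces a path of solutions of \eqref{PDEAn}, so $\boldsymbol{\Theta}_{\varphi(t)}\equiv 0$ along it; the variational formula \eqref{variational-formula} then gives $0=\frac{d}{dt}_{|_{t=0}}\boldsymbol{\Theta}_{\varphi(t)}(\dot\varphi(t))=\|\mathcal{L}_{J\nabla h^Z}J\|^2_{\boldsymbol{g}_{\omega,J}}$, and the positive-definiteness of $\boldsymbol{g}$ on K\"ahler structures with $\Scal>0$ forces $h^Z$ (and, applying the same argument to $JZ$, also $f^Z$) to be a Killing potential. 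Only then is $Y:=Z-\nabla f^Z-J\nabla h^Z$ holomorphic with harmonic dual form, and the Bochner--Lichnerowicz identity yields ${\rm Ric}(\omega)(Y)=0$ and hence that $Y$ is parallel. Your subsequent observations ($J\mathfrak{a}\subset\mathfrak{a}$, $[\mathfrak{a},\mathfrak{k}]\subset\mathfrak{k}$, reductivity) are fine modulo this, though note that $\mathfrak{h}\neq\mathfrak{g}\oplus J\mathfrak{g}$ for $\mathfrak{g}=\mathfrak{a}\oplus\mathfrak{k}$, since $\mathfrak{a}$ is already $J$-invariant and would be counted twice; the correct statement is the three-term decomposition $\mathfrak{a}\oplus\mathfrak{k}\oplus J\mathfrak{k}$ with $\mathfrak{k}\oplus J\mathfrak{k}=\mathfrak{k}^{\C}$.
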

\begin{proof} Proposition~\ref{p:Calabi-Lichne-Mats} gives 
\[\mathfrak{h}_{\rm red}(M, J) = \mathfrak{k}\oplus J\mathfrak{k},\]
where $\mathfrak{h}_{\rm red}(M, J) < \mathfrak{h}(M, J)$ is the ideal of holomorphic vector fields with zeroes. In order to extend the argument for  arbitrary holomorphic vector fields, let $Z$ be real a holomorphic vector field and $\alpha^Z= g(Z)$ its riemannian dual $1$-form. It is well-known (see e.g. \cite[Lemma~2.2.1]{gauduchon-book}) that $\alpha^X$ has Hodge decomposition
\[ \alpha^Z = \psi_H^Z + d^cf^Z + dh^Z, \]
where $\psi_H^Z$ is a  $\omega$-harmonic $1$-form and $f^Z, h^Z$ are smooth functions normalized by $\int_M f^Z\omega^{[n]}=0=\int_M h^Z \omega^{[n]}$.  Acting with the flow $\phi_t^Z$ of $Z$ on $\omega$, we obtain a smooth  path of solutions of \eqref{PDEAn} (which all have $\Scal_{\omega_t}>0$):
\[ \omega_{t} := (\phi_t^Z)^* \omega \in \mathcal{K}_{\omega},  \qquad \theta_t := (\phi_t^Z)^* \theta, \qquad \dot \omega_t = dd^c\left((\phi_t^Z)^*h^Z\right).\]
Denote by $\dot \varphi_t = \phi_t^Zh^Z$ the velocity of 
$\omega_t$.  As $\omega_t$ solves \eqref{PDEAn}, $\boldsymbol{\Theta}_{\varphi(t)} \equiv 0$. The computation \eqref{variational-formula} then gives (using that $\omega_0$ solves \eqref{PDEAn}):
\begin{equation*}
\begin{split}
0=&\frac{d}{dt}_{|_{t=0}} \boldsymbol{\Theta}_{\varphi(t)}(\dot \varphi(t)) = \|\mathcal{L}_{J\nabla h^Z}J\|^2_{\boldsymbol{g}_{\omega,J}}.
\end{split}
\end{equation*}
This shows that $h^Z$ is a Hamiltonian potential of  a Killing vector field $\omega^{-1}(dh^Z)$ of $\omega$. Similarly, considering $JZ$ instead of $Z$, we conclude that $\omega^{-1}(df^Z)$ is a Killing vector field. Thus,
\[ Y:=Z - \nabla f^Z - J\nabla h^Z = g^{-1}(\psi_H^Z)=g^{-1}(\psi_H^Y)\]
is a real holomorphic vector field whose riemannian dual form is harmonic.  By the Bochner--Lichnerowicz formula (\cite[1.20.4]{gauduchon-book})
\[ 0 = \delta \nabla^{-} \psi_H^Y = \frac{1}{2}\Delta \psi_H^Y -{\rm Ric}(\omega)(Y) = - {\rm Ric}(\omega)(Y). \]
By Bochner's formula, $\psi_H^Y$ and, equivalently, $Y$ is parallel. Clearly, if $Y$ is parallel so is $JY$, so the algebra $\mathfrak{a}$ is complex. The claim follows easily. 
\end{proof}

\subsection{Convexity of the Mabuchi functional along smooth geodesics and conditional uniqueness}
Another immediate consequence of the formal momentum map property \eqref{momentum-property} of $\boldsymbol{\mu}$ is the convexity of the Mabuchi energy $\mathcal{E}$ along a smooth Mabuchi geodesic $\omega_{\varphi(t)}\in \mathcal{K}_{\omega_0}$ of K\"ahler metrics with positive scalar curvatures. Recall that such a geodesic is defined (see \cite[Sec. 4.6]{gauduchon-book}) by a curve $\varphi(t) \in \mathcal{H}_{\omega_0}$ such that:
\begin{equation}\label{geodesic} \ddot \varphi (t) = || \nabla^{\varphi(t)} \dot \varphi(t)||^2_{\omega_{\varphi(t)}}.\end{equation}
We are going to prove that
\[ \frac{d^2}{dt^2} \mathcal{E}(\varphi(t)) = \frac{d}{dt} \boldsymbol{\Theta}_{\varphi(t)}(\dot \varphi(t))\geq 0,\]
as soon as $\omega_{\varphi(t)}$ has positive scalar curvature $\Scal_{\varphi(t)}$.
As $\varphi(t+c)$ is also geodesic, it is enough to check the above inequality at $t=0$.  Given a smooth geodesic $\varphi(t)$, we apply \eqref{variational-formula} with $h(t)=\dot\varphi(t)$ and use \eqref{geodesic} to compute
\begin{equation*}\label{Moser-geodesic}
\begin{split}
&\frac{d^2}{dt^2}_{|_{t=0}}\mathcal{E}(\varphi(t))=\frac{d}{dt}_{|_{t=0}} \boldsymbol{\Theta}_{\varphi(t)}(\dot \varphi(t)) =\|\mathcal{L}_{J\nabla \dot \varphi}J\|^2_{\boldsymbol{g}_{\omega,J}}.
\end{split}
\end{equation*}
The RHS is non-negative as soon as $\Scal_\omega>0$
We thus have an analog of a fundamental result in the cscK case
\begin{prop}\label{p:mabuchi-convex} 
\begin{enumerate}
    \item[(a)] The Mabuchi functional $\mathcal{E}$ associated to \eqref{PDEAn} is convex along smooth Mabuchi geodesics of K\"ahler metrics with positive scalar curvature in $\mathcal{K}_{\omega_0}$. 
    \item[(b)] If $\varphi(t)$ is a smooth geodesic emanating from $\varphi$ such that $\Scal_{\varphi(t)}>0$, then $\frac{d^2}{dt^2}\mathcal{E}(\varphi(t))\equiv 0$ holds if and only if $\varphi(t)$ is induced by the flow of  $-JX$, where $X$ a Hamiltonian Killing vector field of $(M, J, \omega_{\varphi})$. 
    \item[(c)] If $\omega_{\varphi_1}, \omega_{\varphi_2} \in \mathcal{K}_{\omega_0}$ solve \eqref{PDEAn} and can be connected by a smooth geodesic of K\"ahler metrics with positive scalar curvature, then there exists a reduced biholomorphism $\phi \in \Aut_r(M, J)$ such that $\phi^* \omega_{\varphi_2}=\omega_{\varphi_1}$.
    \end{enumerate}
\end{prop}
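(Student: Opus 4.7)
The plan is to derive all three parts from the variational identity
\[
\frac{d^2}{dt^2}_{|_{t=0}}\mathcal{E}(\varphi(t)) = \|\mathcal{L}_{J\nabla^{\varphi}\dot{\varphi}}J\|^2_{\boldsymbol{g}_{\omega_{\varphi},J}}
\]
displayed immediately before the statement, combined with the fact, recorded just after \eqref{g}, that the formal Riemannian tensor $\boldsymbol{g}$ is positive definite on $\mathcal{AC}^+_{\omega_0}$.

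For part (a), I first observe that the geodesic equation \eqref{geodesic} is invariant under time-translation, so reparametrising by $s=t-t_0$ allows me to apply the identity above at any $t_0$ with $\Scal_{\varphi(t_0)}>0$. Positivity of $\boldsymbol{g}_{\omega_{\varphi(t_0)},J(t_0)}$ then yields $\frac{d^2}{dt^2}\mathcal{E}(\varphi(t)) \ge 0$ throughout such a geodesic.

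For part (b), the vanishing of this second derivative together with the strict positivity of $\boldsymbol{g}$ forces $\mathcal{L}_{J\nabla^{\varphi(t)}\dot{\varphi}(t)}J \equiv 0$ for every $t$. Hence $J\nabla^{\varphi(t)}\dot{\varphi}(t)$ is a real holomorphic vector field on $(M,J)$; since it is simultaneously the Hamiltonian vector field (up to sign) of $\dot{\varphi}(t)$ with respect to $\omega_{\varphi(t)}$, the Kähler principle that a holomorphic symplectic vector field is Killing shows that $\dot{\varphi}(t)$ is a Killing potential for $\omega_{\varphi(t)}$. A standard uniqueness argument for smooth Mabuchi geodesics with prescribed initial point and velocity (see, e.g., \cite[Sec.~4.6]{gauduchon-book}) then identifies $\varphi(t)$ with the orbit curve generated by the flow of $-JX$ on $(M,J)$, where $X$ is the Hamiltonian Killing vector field of $\omega_{\varphi}$ whose potential is $\dot{\varphi}(0)$.

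For part (c), take a smooth geodesic $\varphi(t)$, $t\in[0,1]$, connecting $\varphi_1$ to $\varphi_2$ with $\Scal_{\varphi(t)}>0$ throughout. Because $\omega_{\varphi_1}$ and $\omega_{\varphi_2}$ are critical points of $\mathcal{E}$, the derivative $\frac{d}{dt}\mathcal{E}(\varphi(t))$ vanishes at $t=0$ and $t=1$; it is non-decreasing by (a), so it is identically zero. Consequently $\frac{d^2}{dt^2}\mathcal{E}(\varphi(t))\equiv 0$, and (b) produces a Hamiltonian Killing vector field $X$ of $\omega_{\varphi_1}$ whose time-$1$ flow $\phi_1\in \Aut(M,J)$ of $-JX$ satisfies $\phi_1^*\omega_{\varphi_2}=\omega_{\varphi_1}$ (up to inverting time). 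Since $-JX$ is the Riemannian gradient of a Hamiltonian potential of $X$, it vanishes at any critical point of that potential, so $\phi_1$ has fixed points and belongs to $\Aut_r(M,J)$.

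The main obstacle I anticipate is the identification step in (b): it requires carefully tracking the sign conventions relating the Moser isotopy, the Hamiltonian vector field of $\dot{\varphi}(t)$, and the operators $J$, $\nabla^{\varphi(t)}$, $\omega_{\varphi(t)}^{-1}$, and then invoking a uniqueness theorem for smooth Mabuchi geodesics with prescribed initial data. Parts (a) and (c) reduce to a one-variable convexity argument resting on the formal momentum map computation and on (b).
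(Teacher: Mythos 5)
Your proposal is correct and follows essentially the same route as the paper: both rest on the second-variation identity $\frac{d^2}{dt^2}\mathcal{E}(\varphi(t))=\|\mathcal{L}_{J\nabla^{\varphi}\dot\varphi}J\|^2_{\boldsymbol{g}_{\omega_\varphi,J}}$ together with the positive definiteness of $\boldsymbol{g}$ on metrics of positive scalar curvature, and both identify the null geodesics via the time-independence of the Killing field $X_t$ (the paper cites \cite[Prop.4.6.3]{gauduchon-book} for exactly the uniqueness step you invoke). Your extra remark in (c) that $\phi_1\in\Aut_r(M,J)$ because $-JX$ is a gradient field and hence has zeros is a correct detail the paper leaves implicit.
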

\begin{proof} (a) follows from \eqref{Moser-geodesic}. 

(b) \eqref{Moser-geodesic} yields that the second derivative of $\mathcal{E}$ along a smooth geodesic $\varphi(t)$ identically vanishes iff the smooth path of symplectic fields $X_t:=J (\nabla^{\varphi(t)} \dot \varphi(t))$ preserve $J$, and hence are hamiltonian Killing.  By \cite[Prop.4.6.3]{gauduchon-book},  $X_t=X$ is time independent and $\varphi(t) = (\phi_t^{-JX})^* \varphi(0)$ where $\phi_t^{-JX}$ is the flow of $-JX$.

(c) This follows from (a) and (b) as the solutions of \eqref{PDEAn} are critical points of $\mathcal{E}$. \end{proof}

\section{The linearization of the 6th order PDE}
For $\omega \in \Ome$, we let
\begin{equation}\label{calS}
\begin{split}
\mathcal{S}(\omega) &:= \left(-\frac{1}{2} dd^c \Scal_{\omega} \wedge \omega^{[n-1]} + \rho_{\omega}^2 \wedge \omega^{[n-2]} + \al_{\omega}^2 \wedge \omega^{[n-2]}\right)\Big/ \omega^{[n]} \\
&= \frac{1}{2} \Delta_{\omega} \Scal_{\omega} + \frac{\Scal_{\omega}^2}{4} -\frac{1}{2}\|{\rm Rc_\omega} \|^2_{\omega} -\frac{1}{2}\|\al_{\omega}\|^2_{\omega}.
\end{split}\end{equation}
For $\varphi \in \mathcal{H}_{\omega_0}$, we denote
\[ \mathcal{S}({\varphi}) := \mathcal{S}(\omega_{\varphi}),\]
so that $\mathcal{S}(\varphi)=c_{\Ome, \A}$ is the PDE \eqref{PDEB} from the Introduction.
We give below the linearization of $\mathcal{S}(\varphi)$, viewed as a $6$-th order quasilinear PDE on $\varphi$.~\footnote{We were kindly informed by R. Dervan that Mahmoud Abdelrazek has independently obtained the linearization formula for $\mathcal{S}$ in the framework of existence theory of  $Z$-critical metrics on projective bundles.}
\begin{prop}\label{prop:linearization}
For a path $\omega_{\varphi(t)} \in \mathcal{K}_{\omega_0}$ with $\omega_{\varphi(0)}=\omega_{\varphi}$ and  $\frac{d}{dt}_{|_{t=0}} \omega_{\varphi(t)} =dd^c \dot \varphi$, the linearization of $\mathcal{S}({\varphi(t)})$ at $t=0$ is given by 
\begin{equation}\begin{split}
\label{eq:linearization}
(D_{\varphi} \mathcal{S})((\dot \varphi)&=\frac{d}{dt}_{|_{t=0}}\mathcal{S}(\varphi(t))= \langle d(\mathcal{S}({\varphi})), d\dot \varphi \rangle_{\omega_\varphi} -  2\delta_\varphi\left(\delta_\varphi\nabla^{\varphi,-}
\left(\delta_\varphi\nabla^{\varphi,-}d\dot{\varphi}\right)\right)\\ &- \delta_\varphi\delta_\varphi\left(R_\varphi\nabla^{\varphi,-}d\dot{\varphi}\right)  {+} 16 \delta_{\varphi} \delta_{\varphi} \left(\al_{\varphi} \circ\mathbb{G}_{\varphi}(\al_{\varphi} \circ \nabla^{\varphi, -} d\dot \varphi)^{\rm skew}\right).
\end{split}\end{equation}
\end{prop}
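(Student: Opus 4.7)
The plan is to deduce \eqref{eq:linearization} from the general variational identity \eqref{variational-formula} obtained in the proof of Lemma~\ref{l:mabuchi}, combined with the explicit expression \eqref{g} for the formal Hermitian metric $\boldsymbol{g}_{\omega,J}$. Applying \eqref{variational-formula} to the constant path $h(t)\equiv h$ (so $\dot h=0$) and using $\tfrac{d}{dt}\omega_{\varphi(t)}^{[n]}=\Lambda_\omega(dd^c\dot\varphi)\omega^{[n]}=-(\delta d\dot\varphi)\omega^{[n]}$, I would compare the outcome with the direct computation of $-\tfrac{d}{dt}_{|_{t=0}}\boldsymbol{\Theta}_{\varphi(t)}(h)$ from the definition of $\boldsymbol{\Theta}$. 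After integrating $\int_M\langle dh,d\dot\varphi\rangle_\omega(\mathcal{S}(\omega)-c_{\Ome,\A}/2)\omega^{[n]}$ by parts via $\delta\bigl((\mathcal{S}-c/2)d\dot\varphi\bigr)=(\mathcal{S}-c/2)\delta d\dot\varphi-\langle d\mathcal{S},d\dot\varphi\rangle$, the two contributions proportional to $(\mathcal{S}-c/2)\delta d\dot\varphi$ cancel and the leftover $\int_M h\langle d\mathcal{S},d\dot\varphi\rangle\omega^{[n]}$ accounts for the first summand in \eqref{eq:linearization}. The problem is thus reduced to establishing
\begin{equation*}
\int_M h\cdot\bigl((D_\varphi\mathcal{S})(\dot\varphi)-\langle d\mathcal{S}(\varphi),d\dot\varphi\rangle_\omega\bigr)\omega^{[n]}=-\boldsymbol{g}_{\omega,J}\bigl(\mathcal{L}_{J\nabla h}J,\mathcal{L}_{J\nabla\dot\varphi}J\bigr)\qquad\forall\, h\in C^\infty(M).
\end{equation*}

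To expand the right-hand side, I would invoke the identity $\nabla^{J,-}(Jdf)=-\tfrac{1}{2}J\mathcal{L}_{J\nabla f}J$ recalled in the proof of Lemma~\ref{l:mentum-property}, which allows one to replace $\mathcal{L}_{J\nabla h}J$ and $\mathcal{L}_{J\nabla\dot\varphi}J$ by multiples of $\nabla^{-}dh$ and $\nabla^{-}d\dot\varphi$. Using $\nabla J=0$, that $J$ is a $g$-isometry, and that $\al$ is of type $(1,1)$, the three summands of \eqref{g} evaluate, respectively, to the scalar-curvature integral $\int_M\Scal\,\langle\nabla^{-}dh,\nabla^{-}d\dot\varphi\rangle\omega^{[n]}$, the Dirichlet integral $2\int_M\langle\delta\nabla^{-}dh,\delta\nabla^{-}d\dot\varphi\rangle\omega^{[n]}$, and the $\al$-integral $16\int_M\langle\mathbb{G}(\al\circ\nabla^{-}dh)^{\text{skew}},(\al\circ\nabla^{-}d\dot\varphi)^{\text{skew}}\rangle\omega^{[n]}$. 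Shifting all derivatives off $h$ via successive integrations by parts, using that $\delta$ is the formal $L^2$-adjoint of $d$ on functions/$1$-forms and of $\nabla^{-}$ on $J$-antiinvariant symmetric $(0,2)$-tensors, and that $\mathbb{G}$ is self-adjoint, converts these into $\int_M h\cdot\delta\delta(\Scal\nabla^{-}d\dot\varphi)\omega^{[n]}$, $2\int_M h\cdot\delta\delta\nabla^{-}(\delta\nabla^{-}d\dot\varphi)\omega^{[n]}$, and $-16\int_M h\cdot\delta\delta\bigl(\al\circ\mathbb{G}(\al\circ\nabla^{-}d\dot\varphi)^{\text{skew}}\bigr)\omega^{[n]}$, respectively. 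Multiplying by the overall minus sign in front of $\boldsymbol{g}$ and varying $h$ over $C^\infty(M)$ then yields the pointwise identity \eqref{eq:linearization}.

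The main technical obstacle will be the careful bookkeeping of scalar coefficients and signs in the second step. In particular, one must check that the factor $2$ arising in $\mathcal{L}_{J\nabla f}J=-2J^{-1}\nabla^{J,-}(Jdf)$ applied on \emph{both} arguments of $\boldsymbol{g}$ combines correctly with the prefactors $\tfrac{1}{4},\tfrac{1}{2},4$ in \eqref{g} to produce the coefficients $-1,-2,+16$ displayed in \eqref{eq:linearization}; and one must verify, using that $\al$ is $J$-invariant of type $(1,1)$ while $\nabla^{J,-}df$ is $J$-antiinvariant symmetric, that $(\al\circ J^{-1}\nabla^{J,-}(Jdf))^{\text{skew}}$ agrees up to a universal sign with $(\al\circ\nabla^{J,-}df)^{\text{skew}}$, which is what permits rewriting the third summand of \eqref{g} as the harmonic $\al$-integral above and justifies the use of self-adjointness of $\mathbb{G}$ in the subsequent adjointness manipulations. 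An alternative independent route would be to linearize each term of the scalar expression \eqref{calS} directly by means of the variational formulas \eqref{ak-variation} and \eqref{variation-alpha} already established in the proof of Lemma~\ref{l:mentum-property}, which should reproduce \eqref{eq:linearization} and serve as a cross-check.
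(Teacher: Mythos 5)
Your proposal follows essentially the same route as the paper: the authors likewise specialize the variational identity \eqref{variational-formula} to a $t$-independent test function $h$, peel off the $\langle d\mathcal{S}(\varphi),d\dot\varphi\rangle$ term from the volume-form variation, and then convert the three summands of $\boldsymbol{g}_{\omega,J}\bigl(\mathcal{L}_{J\nabla h}J,\mathcal{L}_{J\nabla\dot\varphi}J\bigr)$ into divergence form via $\nabla^{\varphi,-}dh=-\tfrac12\mathcal{L}_{J\nabla^{\omega_\varphi}h}J$ and integration by parts against $h$. Your sign and coefficient bookkeeping (the factors $1$, $2$, $16$ and the extra sign flip when moving the skew form $\al_\varphi$ across the inner product in the Green-operator term) matches what the paper does, so the argument is sound as proposed.
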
 
\begin{proof}
   By \eqref{variational-formula}, for any smooth function $h$, we have
\[
\begin{split} 
&\frac{d}{dt}_{|_{t=0}}\int_M h \mathcal{S}({\varphi(t)}) \omega_{\varphi(t)}^{[n]}\\
=&-\int_M \left\langle dh, d\dot \varphi\right\rangle_{\omega_\varphi}\mathcal{S}({\varphi}) \omega_{\varphi}^{[n]} -  \boldsymbol{g}_{\omega_\varphi,J} ((\mathcal{L}_{J \nabla^{\omega_\varphi}h} J) , (\mathcal{L}_{J\nabla^{\omega_\varphi}\dot\varphi} J)) \\
=&-\int_M \left\langle dh, d\dot \varphi\right\rangle_{\omega_\varphi}\mathcal{S}({\varphi}) \omega_{\varphi}^{[n]}-  \int_M \left\langle \nabla^{\varphi,-}dh, \nabla^{\varphi,-}d\dot{\varphi}\right\rangle_J \Scal_\varphi \,\omega_\varphi^{[n]}  \\
&- 2\int_M\left\langle \delta_\varphi(\nabla^{\varphi,-}dh), \delta_\varphi(\nabla^{\varphi,-}d\dot{\varphi})  \right\rangle_J\omega_\varphi^{[n]}\\
& {-} 16\int_M \left\langle \mathbb{G}_\varphi\left((\al_\varphi\circ\nabla^{\varphi,-}dh)^{\rm skew}\right), \left(\al_\varphi\circ \nabla^{\varphi,-}d\dot{\varphi}\right)^{\rm skew}\right\rangle_\varphi \, \omega_\varphi^{[n]}
\end{split} 
\]
where we use that $\nabla^{\varphi,-}dh=-\frac{1}{2}\mathcal{L}_{J \nabla^{\omega_\varphi}h} J$. Using that
\[
\begin{split}
    \frac{d}{dt}_{|_{t=0}}\int_M h \mathcal{S}({\varphi(t)}) \omega_{\varphi(t)}^{[n]}=&-\int_M \left\langle dh, d\dot \varphi\right\rangle_{\omega_\varphi}\mathcal{S}({\varphi}) \omega_{\varphi}^{[n]}-\int_M\left\langle d(\mathcal{S}({\varphi})), d\dot \varphi\right\rangle_{\omega_\varphi}h \omega_{\varphi}^{[n]}\\
    &+\int_M h L(\dot{\varphi})\omega_{\varphi}^{[n]},
\end{split}
\]
where $L(\dot{\varphi}):= \frac{d}{dt}_{|_{t=0}} \mathcal{S}({\varphi(t)})$, we obtain
\[
\begin{split}
    \int_M  L(\dot{\varphi})h\, \omega_{\varphi}^{[n]}=\int_M&\Big(\langle d(\mathcal{S}({\varphi})), d\dot \varphi \rangle_{\omega_\varphi} - 2\delta_\varphi\left(\delta_\varphi\nabla^{\varphi,-}
\left(\delta_\varphi\nabla^{\varphi,-}d\dot{\varphi}\right)\right)\\ &-\delta_\varphi\delta_\varphi\left(R_\varphi\nabla^{\varphi,-}d\dot{\varphi}\right)  + 16\, \delta_{\varphi} \delta_{\varphi} \left(\al_{\varphi} \circ\mathbb{G}_{\varphi}(\al_{\varphi} \circ \nabla^{\varphi, -} d\dot \varphi)^{\rm skew}\right)\Big) h\,\omega_\varphi^{[n]}.
\end{split}
\]
This completes the proof.
\end{proof}

\section{Relaxing \eqref{PDEAn}: the extremal K\"ahler metrics for \eqref{PDEAn} and the LeBrun--Simanca openness result} As in the case of cscK metrics, the momentum map interpretation of \eqref{PDEAn} leads to a natural generalization of \eqref{PDEAn},  for which the Futaki obstruction will be trivially satisfied.  
\begin{defn}[Extremal K\"ahler metric for \eqref{PDEAn}]\label{d:extremal} Suppose $(M, J)$ is a compact complex $n$-dimensional manifold or orbifold,   endowed with a K\"ahler class $\Ome$ and a deRham class $\A\in H^2(M, \R)$,  satisfying \eqref{topo-n}.
A K\"ahler metric $\omega \in \Ome$ is called \emph{extremal} for \eqref{PDEAn} if  the smooth function $\mathcal{S}(\omega)$ (see \eqref{calS})
is a Killing potential, i.e. $V=-\omega^{-1}(d\mathcal{S}(\omega))$ is a Killing vector field for the K\"ahler structure $(\omega, J)$.
\end{defn}

\begin{lemma}\label{l:Calabi} An extremal K\"ahler metric for \eqref{PDEAn} is a critical point of the Calabi energy ${\mathcal Ca} : \mathcal{K}_{\omega_0} \to \R$
\[ {\mathcal Ca}(\omega):= \int_M \mathcal{S}(\omega)^2 \omega^{[n]}.\]
Conversely, a K\"ahler metric $\omega \in \Ome$ with positive scalar curvature  which is a critical point of ${\mathcal Ca}$ is  extremal  for \eqref{PDEAn}.
\end{lemma}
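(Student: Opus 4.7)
The plan is to mirror Calabi's classical derivation that cscK–extremal K\"ahler metrics are exactly the critical points of the Calabi energy, the key input being the formal momentum map picture $(\mathcal{J}^+_{\omega_0}, \boldsymbol{\Omega}, \boldsymbol{J}, \boldsymbol{\mu})$ constructed in Theorem~\ref{thm:GIT}. The guiding observation is that, up to an additive topological constant, the Calabi energy is the squared $L^2$-norm of the formal momentum map, since Theorem~\ref{thm:GIT} gives $\boldsymbol{\mu}(J) = -\bigl(\mathcal{S}(J) - c_{\Ome,\A}/2\bigr)\omega_0^{[n]}$.

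First I would invoke the Moser trick already used in the proofs of Lemma~\ref{l:mabuchi} and Proposition~\ref{p:mabuchi-convex}: given a smooth path $\omega_{\varphi(t)} \in \mathcal{K}_{\omega_0}$ of K\"ahler metrics with positive scalar curvature starting at $\omega$, one produces diffeomorphisms $\phi_t$ with $\phi_t^*\omega_{\varphi(t)} = \omega_0$, and hence a smooth path $J(t) := \phi_t^* J_0 \in \mathcal{J}^+_{\omega_0}$ satisfying $\mathcal{S}(J(t)) = \phi_t^*\mathcal{S}(\omega_{\varphi(t)})$. Consequently ${\mathcal Ca}(\omega_{\varphi(t)}) = \int_M \mathcal{S}(J(t))^2 \omega_0^{[n]}$, so
\[
\frac{d}{dt}_{|_{t=0}}{\mathcal Ca}(\omega_{\varphi(t)}) \;=\; 2\int_M \mathcal{S}(J_0)\,\dot{\mathcal{S}}(J(0))\,\omega_0^{[n]}.
\]
Applying Lemma~\ref{l:mentum-property} with the time-independent test function $h := 2\mathcal{S}(J_0)$, the momentum map formula together with the bilinearity of $\boldsymbol{\Omega}$ yields
\[
\frac{d}{dt}_{|_{t=0}}{\mathcal Ca} \;=\; 2\,\boldsymbol{\Omega}_{J_0}\!\bigl(-\mathcal{L}_{X_{\mathcal{S}(J_0)}}J_0,\, \dot J(0)\bigr),
\]
where $X_{\mathcal{S}(J_0)} = -\omega_0^{-1}(d\mathcal{S}(J_0))$ is the Hamiltonian vector field of $\mathcal{S}(J_0)$. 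Combining the identity $\boldsymbol{g}_J(\cdot,\cdot) = -\boldsymbol{\Omega}_J(\boldsymbol{J}\cdot,\cdot)$ from \eqref{g} with the Moser description of $\dot J(0)$ as $\boldsymbol{J}$ applied to $\mathcal{L}_{X_{\dot\varphi}}J_0$ (recorded in the proof of Lemma~\ref{l:mabuchi}), this rewrites, up to an overall sign, as the symmetric $\boldsymbol{g}$-pairing
\[
\frac{d}{dt}_{|_{t=0}}{\mathcal Ca} \;=\; \pm\, 2\,\boldsymbol{g}_{J_0}\!\bigl(\mathcal{L}_{X_{\mathcal{S}(J_0)}}J_0,\, \mathcal{L}_{X_{\dot\varphi}}J_0\bigr).
\]

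Both directions of the lemma now follow at once. For the forward implication, if $\omega$ is extremal for \eqref{PDEAn} then $X_\mathcal{S}$ is Killing and Hamiltonian; it therefore preserves $g$ and $\omega_0$, hence also $J_0$, so $\mathcal{L}_{X_\mathcal{S}}J_0 = 0$ and the derivative vanishes for every $\dot\varphi$. For the converse, assume $\omega$ is critical and $\Scal(\omega) > 0$; then $J_0 \in \mathcal{J}^+_{\omega_0}$, on which $\boldsymbol{g}$ is positive definite. Specialising to $\dot\varphi := \mathcal{S}(J_0)$ forces $\|\mathcal{L}_{X_\mathcal{S}}J_0\|^2_{\boldsymbol{g}_{J_0}} = 0$, hence $\mathcal{L}_{X_\mathcal{S}}J_0 = 0$. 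Thus $X_\mathcal{S}$ is both real-holomorphic and symplectic, which on the K\"ahler manifold $(M,J_0,\omega_0)$ forces it to be Killing, as required by Definition~\ref{d:extremal}.

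The main bookkeeping task is the identity displayed in the second paragraph: it is parallel to (and a specialisation of) formula \eqref{variational-formula} in the proof of Lemma~\ref{l:mabuchi} with $h(t) = h$ time-independent, so the tracking of sign conventions for $\boldsymbol{\Omega}$, $\boldsymbol{J}$ and $\boldsymbol{g}$ is entirely controlled by that calculation. The essential structural ingredient that makes the converse direction work is the positive definiteness of $\boldsymbol{g}$ on $\mathcal{J}^+_{\omega_0}$, which is precisely the reason the hypothesis $\Scal_\omega > 0$ is imposed in the statement.
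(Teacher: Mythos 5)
Your argument is correct and follows essentially the same route as the paper: Moser-lift the path $\omega_{\varphi(t)}$ to a path $J(t)\in\mathcal{J}_{\omega_0}$, identify ${\mathcal Ca}$ (up to a topological constant) with the squared norm of $\boldsymbol{\mu}$, differentiate via the momentum map property \eqref{momentum-property}, rewrite the result as $-2\,\boldsymbol{g}_J(\mathcal{L}_{J\nabla^J\dot\varphi}J,\mathcal{L}_{J\nabla^J\mathcal{S}}J)$, and conclude both directions from the positive definiteness of $\boldsymbol{g}$ on $\mathcal{J}^+_{\omega_0}$. The only cosmetic difference is that you leave the overall sign of the $\boldsymbol{g}$-pairing undetermined (it is $-2$, as in the paper's display), which is harmless since only its vanishing is used.
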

\begin{proof} For a path $\omega_{\varphi(t)} \in \mathcal{K}_{\omega_0}$ with $\omega_{\varphi(0)}=\omega_{\varphi}$ and  $\frac{d}{dt}_{|_{t=0}} \omega_{\varphi(t)} =dd^c \dot \varphi$,  we use the Moser lift $\phi_t$ as in the proof of Lemma~\ref{l:mabuchi}. This  gives rise to a path $J(t):=\phi_t \cdot J \in \mathcal{J}_{\omega_0}$ such that
\[ {\mathcal Ca}(\omega_{\varphi(t)})= \int_M \left(\frac{\boldsymbol{\mu}(J(t))}{\omega_0^{[n]}}\right) \boldsymbol{\mu}(J(t)).\]
Differentiating and using \eqref{momentum-property} with $\dot J=J \mathcal{L}_{J \nabla^J \dot \varphi}J$, we obtain
\[
\begin{split}
    \frac{d}{dt}_{|_{t=0}} {\mathcal Ca}(\omega_{\varphi(t)}) &= 2 \boldsymbol{\Omega}_{J}(J\mathcal{L}_{J\nabla^J \dot \varphi}J, \mathcal{L}_{J \nabla^J \mathcal{S}_{\varphi}} J)=-2\boldsymbol{g}_J(\mathcal{L}_{J\nabla^J \dot \varphi}J, \mathcal{L}_{J \nabla^J \mathcal{S}_{\varphi}} J).\\
    \end{split}\]
The claim follows by the positive definiteness of $\boldsymbol{g}$ on $\mathcal{J}_{\omega_0}^+$.    \end{proof}

We notice that Proposition~\ref{p:Calabi-Lichne-Mats} extends to the extremal case (see \cite[Thm.A.5]{ASU}):

\begin{prop}\label{p:Calabi-ext} An extremal K\"ahler metric for \eqref{PDEAn}  with $\Scal_{\omega}>0$ is invariant under a maximal compact (connected) subgroup $K<\Aut_{r}(M, J)$. 
\end{prop}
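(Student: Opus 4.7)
The plan is to adapt Calabi's classical extension of the Matsushima--Lichnerowicz theorem to extremal metrics, by reducing to Proposition~\ref{p:CLM-full} applied to the centralizer of the extremal vector field. The positive definiteness of the formal metric $\boldsymbol{g}$ on $\mathcal{J}^+_{\omega_0}$, which is precisely what the hypothesis $\Scal_{\omega}>0$ secures, plays an essential r\^ole in the second-variation step.

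First, by the extremality hypothesis, $V := -\omega^{-1}(d\mathcal{S}(\omega))$ is a Hamiltonian Killing vector field, hence a real holomorphic vector field in $\mathfrak{h}_{\rm red}(M,J)$; its closure inside the identity component of the isometry group of $(g,J,\omega)$ is a compact torus $\mathbb{T}\subset \Aut_r(M,J)$. One then restricts attention to the centralizer $H := Z_{\Aut_r(M,J)}(\mathbb{T})$, a complex Lie subgroup of $\Aut_r(M,J)$ on which $V$ is central. Let $K < H$ denote the compact subgroup of isometries of $\omega$ commuting with $\mathbb{T}$, and let $\mathfrak{k}\subset\mathfrak{h}$ be the corresponding Lie algebras.

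The core step is to replicate the second-variation argument from the proof of Proposition~\ref{p:CLM-full} on $H$ rather than on the full group. For any real holomorphic $Z \in \mathfrak{h}$, Hodge-decompose its $g$-dual $1$-form as $\alpha^Z = \psi_H^Z + d^c f^Z + dh^Z$ with respect to $\omega$ (the harmonic part yields an $\omega$-parallel vector field, handled exactly as in \emph{loc.\ cit.}), and run the variational identity \eqref{variational-formula} along the smooth path $\omega_t := (\phi_t^Z)^*\omega$. Because $V$ is central in $\mathfrak h$, the flow of $Z$ maps extremal metrics to extremal metrics with the same extremal vector field; consequently, the derivative $\boldsymbol{\Theta}_{\varphi(t)}(\dot\varphi(t))$ of the Mabuchi energy, modified by the linear correction coming from the central element $V$, vanishes identically along this path. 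Differentiating a second time at $t=0$ reduces, exactly as in the computation preceding Proposition~\ref{p:mabuchi-convex}, to
\[
0 = \|\mathcal{L}_{J\nabla^J h^Z}J\|^2_{\boldsymbol{g}_{\omega,J}}.
\]
Positive-definiteness of $\boldsymbol{g}$ on $\mathcal{J}^+_{\omega_0}$ then forces $\mathcal{L}_{J\nabla^J h^Z}J \equiv 0$, i.e.\ $\omega^{-1}(dh^Z)$ is Killing; applying the same argument to $JZ$ yields that $\omega^{-1}(df^Z)$ is Killing as well. Hence $Z \in \mathfrak{k}\oplus J\mathfrak{k}$, showing that $K$ is a maximal compact subgroup of $H$.

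To upgrade maximality from $H$ to all of $\Aut_r(M,J)$, I would invoke standard structure theory: any maximal compact subgroup $K'$ of $\Aut_r(M,J)$ contains a maximal torus conjugate to $\mathbb{T}$, so after replacing $K'$ by a conjugate we may assume $\mathbb{T}\subset K'$; but then $K'$ centralizes $\mathbb{T}$, forcing $K'\subset H$ and hence $K'=K$ by the maximality already proved. The main anticipated obstacle is the precise bookkeeping needed to justify that $\omega_t$ remains a critical path of the $V$-corrected Mabuchi functional on $H$-orbits and that the second variation of this modification reduces cleanly to $\|\mathcal{L}_{J\nabla^J h^Z}J\|^2_{\boldsymbol{g}_{\omega,J}}$; this is exactly what the centrality of $V$ in $\mathfrak{h}$ ensures after a short computation that parallels \eqref{variational-formula}, noting that all intermediate terms involving $\mathcal S(\omega)$ pair trivially with the hamiltonian pieces $h^Z, f^Z$ of vector fields commuting with $V$.
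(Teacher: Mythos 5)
Your argument splits into two parts, and only the first is sound. The Lichnerowicz--Matsushima step on the centralizer $H=Z_{\Aut_r(M,J)}(\T)$ works essentially as you describe: since $[Z,V]=0$ and $V$ is a Killing field preserving $J$, the Hodge components satisfy $\mathcal{L}_V f^Z=\mathcal{L}_V h^Z=0$, the extra terms that the extremal correction $\ell^{\rm ext}(\mu_\varphi)$ contributes to \eqref{variational-formula} integrate to zero (they reduce to $\int_M\{f^Z,h^Z\}_{\omega}\,\ell^{\rm ext}(\mu_\omega)\,\omega^{[n]}=0$), and positivity of $\boldsymbol{g}$ on $\mathcal{J}^+_{\omega_0}$ then gives $\mathfrak{h}_{\rm red}(M,J)\cap{\rm Lie}(H)=\mathfrak{k}\oplus J\mathfrak{k}$, hence $K$ is maximal compact in $H$. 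For comparison: the paper does not argue this way at all --- its entire justification is the citation of the formal momentum-map statement \cite[Thm.A.5]{ASU}, whose hypotheses are exactly Lemmas~\ref{l:mentum-property} and \ref{l:equivariant} plus the positivity of $\boldsymbol{g}$, so your hands-on version of the centralizer step is a reasonable substitute for that part.

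The upgrade from ``maximal compact in $H$'' to ``maximal compact in $\Aut_r(M,J)$'' contains a genuine gap. You claim that once a maximal compact connected subgroup $K'$ has been conjugated so that $\T\subset K'$, it follows that ``$K'$ centralizes $\T$''. This is false: a compact group containing a torus need not centralize it ($PU(2)$ contains its maximal torus but its centralizer there is only the torus itself), and nothing forces the closure $\T$ of the flow of the extremal field to be central in $K'$. (Your parenthetical assertion that $\T$ is a \emph{maximal} torus of $\Aut_r(M,J)$ is also unjustified, though that part is repairable: any compact subgroup is conjugate into a fixed maximal compact.) Without $K'\subset H$ you cannot invoke the maximality of $K$ in $H$, so the proof stops short of the stated conclusion. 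Closing this gap is precisely the nontrivial content of Calabi's theorem and of \cite[Thm.A.5]{ASU}: one needs the decomposition of $\mathfrak{h}_{\rm red}(M,J)$ into eigenspaces of ${\rm ad}(\nabla^{\omega}\mathcal{S}(\omega))$ and an argument that a compactly embedded subalgebra containing $\mathfrak{k}$ meets both the $J\mathfrak{k}$ summand and the nonzero eigenspaces trivially; the centralizer computation alone does not control the elements of a putative larger compact subgroup that fail to commute with $V$.
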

Because of this, we can fix once for all a  maximal compact torus $\T < \Aut_r(M, J)$, take a $\T$-invariant base point $\omega_0 \in 2\pi \Ome$,  and search for $\T$-invariant extremal K\"ahler metrics in 
$\mathcal{K}_{\omega_0}^\T$. In this setup the momentum map property \eqref{momentum-property} yields
\begin{lemma}\label{l:extremal-field} Suppose $\omega \in \mathcal{K}^{\T}_{\omega_0}$ and let $P_{\omega}$ denote the finite dimensional vector space of all Killing potentials for Killing vector fields in $\tor = {\rm Lie}(\T)$. Then the $L_2(M, \omega)$-orthogonal projection of $\mathcal{S}(\omega)$ to $P_{\omega}$ is the Killing potential of a vector field $V^{\rm ext} \in \tor$ which is independent of the choice of $\omega$.
\end{lemma}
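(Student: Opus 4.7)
The plan is to identify the $L^2$-projection of $\mathcal{S}(\omega)$ onto $P_\omega$ via the Futaki character of Proposition~\ref{p:Futaki} and then to invoke an $\omega$-invariance property of the Futaki--Mabuchi bilinear form on $\tor$. For each $V\in\tor$, let $h_V^\omega$ denote the unique $\omega$-Killing potential of $V$ normalized by $\int_M h_V^\omega\,\omega^{[n]}=0$, so that $P_\omega = \mathbb{R}\cdot 1 \oplus P_\omega^0$ with $P_\omega^0 := \{h_V^\omega : V\in\tor\}$. Integrating \eqref{calS} and using \eqref{topo-c} shows that the mean of $\mathcal{S}(\omega)$ is the topological constant $c_{\Ome,\A}/2$, so the projection splits as $\Pi_\omega(\mathcal{S}(\omega)) = \tfrac{c_{\Ome,\A}}{2} + h^\star_\omega$ with $h^\star_\omega\in P_\omega^0$; write $h^\star_\omega = h^\omega_{V^{\rm ext}(\omega)}$ for a unique $V^{\rm ext}(\omega)\in\tor$. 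By the definition \eqref{e:Futaki} of the Futaki character and the zero-mean normalization of $h_W^\omega$, one has $\int_M h_W^\omega\,\mathcal{S}(\omega)\,\omega^{[n]}=-\mathcal{F}(W)$ for every $W\in\tor$, so $V^{\rm ext}(\omega)$ is the unique element of $\tor$ characterized by
\[
b_\omega\bigl(V^{\rm ext}(\omega), W\bigr) = -\mathcal{F}(W), \qquad \forall W\in\tor, \qquad\qquad(\star)
\]
where $b_\omega(V, W) := \int_M h_V^\omega h_W^\omega\,\omega^{[n]}$ is a symmetric, positive-definite bilinear form on $\tor$. By Proposition~\ref{p:Futaki} the right-hand side of $(\star)$ is independent of $\omega$; it is therefore enough to prove that $b_\omega$ itself is independent of $\omega\in \mathcal{K}^\T_{\omega_0}$.

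The main step, and the main technical obstacle, is to prove this Futaki--Mabuchi invariance of $b_\omega$. Since $\mathcal{K}^\T_{\omega_0}$ is connected, I compute $\frac{d}{dt}_{|_{t=0}} b_{\omega_t}(V,W)$ along an arbitrary path $\omega_t = \omega + t\,dd^c\dot\varphi$ with $\dot\varphi\in C^\infty(M)^\T$. Differentiating $\iota_V\omega_t = dh_V^{\omega_t}$ and using $\mathcal{L}_V(d^c\dot\varphi) = d^c\mathcal{L}_V\dot\varphi = 0$ (since $V$ preserves $J$ and both $V$ and $\dot\varphi$ are $\T$-invariant), one obtains $d\dot h_V = d\bigl((JV)\dot\varphi\bigr)$; a short Stokes argument against the zero-mean normalization further shows that the integration constant vanishes, giving $\dot h_V = (JV)\dot\varphi$. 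Substituting in
\[
\dot b_\omega(V, W) = \int_M\bigl(\dot h_V\, h_W + h_V\,\dot h_W\bigr)\omega^{[n]} + \int_M h_V h_W \,dd^c\dot\varphi\wedge\omega^{[n-1]},
\]
integrating by parts in the last term via the $L^2$-self-adjointness of $dd^c\wedge\omega^{[n-1]}$ and the Leibniz rule $dd^c(h_V h_W) = h_V dd^c h_W + h_W dd^c h_V + dh_V\wedge d^c h_W + dh_W\wedge d^c h_V$, and using the algebraic relation $(JV)(f) = \mathcal{L}_{JV}f$ together with $\mathcal{L}_{JV}\omega^{[n]} = dd^c h_V \wedge \omega^{[n-1]}$, the variation collapses to
\[
\dot b_\omega(V,W) = -\int_M \dot\varphi\bigl[(JV)(h_W) + (JW)(h_V)\bigr]\omega^{[n]} + \int_M \dot\varphi\bigl(dh_V\wedge d^c h_W + dh_W\wedge d^c h_V\bigr)\wedge \omega^{[n-1]}.
\]

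The key pointwise K\"ahler identities $\nabla^\omega h_V = JV$ (from $\iota_V\omega = dh_V$), the $J$-invariance of $g_\omega$, and the formula $A^\flat\wedge B^\flat\wedge \omega^{[n-1]} = g_\omega(JA, B)\,\omega^{[n]}$ for real $1$-forms dual to vector fields $A,B$, reduce both integrals on the right-hand side to equal and opposite quantities proportional to $\int_M \dot\varphi\, g_\omega(V, W)\,\omega^{[n]}$; they cancel and yield $\dot b_\omega(V,W)=0$. This is the classical Futaki--Mabuchi invariance of the pairing on $\tor$; it relies only on the $\T$-equivariant K\"ahler structure and is independent of $\mathcal{S}(\omega)$. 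Combining with the first paragraph, both $b_\omega$ and the right-hand side of $(\star)$ are $\omega$-independent, and by positive-definiteness of $b_\omega$ the vector field $V^{\rm ext}$ it determines is uniquely defined and independent of $\omega\in\mathcal{K}^\T_{\omega_0}$, as claimed.
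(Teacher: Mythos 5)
Your argument is correct in substance but follows a genuinely different route from the paper's. The paper proves the lemma entirely in the fixed-symplectic-form picture: it transports $\omega\in\mathcal{K}^{\T}_{\omega_0}$ to a $\T$-invariant $J\in\mathcal{J}_{\omega_0}$ via the $\T$-equivariant Moser lift, observes that the normalized Killing potentials of elements of $\tor$ then become \emph{fixed} $\omega_0$-hamiltonians --- so the subspace $P$, the $L^2$ inner product, and in particular the Gram matrix $\int_M h_Vh_W\,\omega_0^{[n]}$ are all manifestly independent of $J$ --- and uses the momentum map property \eqref{momentum-property} with $\mathcal{L}_V J=0$ to conclude that the pairings $\int_M h_V\,\boldsymbol{\mu}(J)$ are constant; the projection is then a fixed element of $P$ and pulls back to the stated conclusion. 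You instead stay in the fixed-complex-structure picture, where the Gram matrix $b_\omega(V,W)=\int_M h_V^{\omega}h_W^{\omega}\,\omega^{[n]}$ is \emph{not} manifestly constant, so you must (and do) prove the classical Futaki--Mabuchi invariance of $b_\omega$ by a direct variational computation. That computation is carried out correctly: $\dot h_V=(JV)\dot\varphi$ with vanishing integration constant, and the two resulting integrals each reduce to $\pm 2\int_M\dot\varphi\,g_\omega(V,W)\,\omega^{[n]}$ and cancel. Your route is more elementary and self-contained; the paper's buys the invariance of the Gram matrix for free from the Moser-lift dictionary.

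One slip to correct: with the paper's definition of the real holomorphy potential (the normalized $dd^c$-potential of $\mathcal{L}_Z\omega_\varphi$), a vector field $W\in\tor$ that is Killing for $\omega$ has $h_\varphi=0$, so $\mathcal{F}(W)=0$ identically and your identity $\int_M h_W^{\omega}\,\mathcal{S}(\omega)\,\omega^{[n]}=-\mathcal{F}(W)$ cannot be right as written. The correct statement is $\int_M h_W^{\omega}\,\mathcal{S}(\omega)\,\omega^{[n]}=\pm\mathcal{F}(JW)$, since it is $JW$ whose real holomorphy potential with respect to $\omega$ is (up to sign) the normalized Killing potential $h_W^{\omega}$. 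This does not affect the structure of your argument: $JW$ is a fixed real holomorphic vector field, and Proposition~\ref{p:Futaki} still gives the $\omega$-independence of the right-hand side of your relation $(\star)$.
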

\begin{proof} Uses the momentum map property \eqref{momentum-property} and the fact that the $\T$-equivariant Moser lift of a normalized $\omega_{\varphi(t)}$-Killing potential for $V\in \tor$ is a (fixed) normalized $\omega_0$-hamiltonian  of $V$. \end{proof}
\begin{defn} The vector field $V^{\rm ext}$ is called the \emph{extremal vector field} of $(\Ome, \A, \T)$.
\end{defn}
We then have the following
\begin{lemma}\label{l:extremal-reduction} A K\"ahler metric $\omega\in \mathcal{K}_{\omega_0}^{\T}$ is extremal  for \eqref{PDEAn} iff $V^{\rm ext}= J \nabla^{\omega} \mathcal{S}(\omega)$.
\end{lemma}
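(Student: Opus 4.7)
The plan is to exploit the explicit characterization of $V^{\rm ext}$ in Lemma~\ref{l:extremal-field} together with a standard maximality argument for compact tori in $\Aut_r(M,J)$. The two implications will be treated separately.

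The forward direction is immediate: if $V^{\rm ext}=J\nabla^\omega\mathcal{S}(\omega)$, then because $V^{\rm ext}\in\tor={\rm Lie}(\T)$ by the very construction in Lemma~\ref{l:extremal-field}, the vector field $J\nabla^\omega\mathcal{S}(\omega)$ is automatically $\omega$-Killing, which is exactly the condition for $\omega$ to be extremal for \eqref{PDEAn} in the sense of Definition~\ref{d:extremal}.

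For the converse, the plan is to first prove that $X:=J\nabla^\omega\mathcal{S}(\omega)$ lies in $\tor$, and then identify $X$ with $V^{\rm ext}$. Under the extremality assumption, $X$ is $\omega$-Killing; since $\mathcal{S}(\omega)$ has critical points on the compact $M$, $X$ has zeros, so $X\in\mathfrak{h}_{\rm red}(M,J)$. The $\T$-invariance of $\omega$ forces $\mathcal{S}(\omega)$ to be $\T$-invariant, hence $X$ commutes with every element of $\tor$. As the one-parameter subgroup generated by $X$ acts by $\omega$-isometries, its closure $T'$ inside $\Aut_r(M,J)$ is a compact torus. Then $\T\cdot T'$ is a compact connected abelian subgroup of $\Aut_r(M,J)$, i.e.\ again a torus, and the maximality of $\T$ forces $\T\cdot T'=\T$, so that $X\in\tor$.

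To conclude, since $X\in\tor$ admits $\mathcal{S}(\omega)$ as a Killing potential, we have $\mathcal{S}(\omega)\in P_\omega$, so its $L^2(M,\omega)$-orthogonal projection onto $P_\omega$ is itself (constants being absorbed in the ambiguity of the Killing potential). By Lemma~\ref{l:extremal-field} this projection is the Killing potential of $V^{\rm ext}$, and therefore $V^{\rm ext}=X=J\nabla^\omega\mathcal{S}(\omega)$, as required. The main obstacle is the maximality step forcing $X\in\tor$; this is essentially the argument familiar from the classical extremal cscK setting, amounting to the assertion that any real holomorphic vector field with zeros commuting with a maximal compact torus of $\Aut_r(M,J)$ must lie in that torus.
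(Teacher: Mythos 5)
Your proof is correct and follows essentially the same route as the paper: the forward direction uses that $V^{\rm ext}\in\tor$ is automatically Killing for any $\omega\in\mathcal{K}_{\omega_0}^{\T}$, and the converse shows $J\nabla^\omega\mathcal{S}(\omega)$ is Killing and commutes with $\tor$, invokes maximality of $\T$ to place it in $\tor$, and then identifies it with $V^{\rm ext}$ via Lemma~\ref{l:extremal-field}. Your write-up merely fills in the details (zeros of $X$, closure of its flow being a torus) that the paper compresses into ``Killing and central for $\tor$, hence in $\tor$ by maximality.''
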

\begin{proof} If $J \nabla^{\omega} \mathcal{S}(\omega)= V^{\rm ext}$ , then $\omega$ is extremal as $V^{\rm ext}\in \tor$ is a Killing vector field for $\omega \in \mathcal{K}_{\omega_0}^\T$. Conversely, if  $\omega \in \mathcal{K}_{\omega_0}^\T$ is extremal, then  $V:= J \nabla^{\omega} \mathcal{S}(\omega)$  is Killing and central for $\tor$. By the maximality of $\T< \Aut_r(M, J)$, $V\in \tor$, so $V=V^{\rm ext}$ by Lemma~\ref{l:extremal-field}. \end{proof}

\begin{rmk}\label{r:Futaki-Mabuchi} Similarly to \cite{lahdili,FM},  for each $\omega_{\varphi} \in \mathcal{K}_{\omega_0}^{\T}$,  we can consider the corresponding \emph{normalized} momentum map 
\[ \mu_{\varphi} : M \to \tor^*, \qquad \int_M \mu_{\varphi} \omega_{\varphi}^{[n]}=0.\]
This gives rise to a (normalized)  $\T$-momentum polytope $\Pol_\Ome:=\mu_{\varphi}(M) \subset \tor^*$, which is independent of the $\T$-invariant K\"ahler metric $\omega_{\varphi}$.  Then,  \eqref{topo-c}, Lemmas~\ref{l:extremal-field}  and \ref{l:extremal-reduction} tel us that the  affine linear function $\ell^{\rm ext}(x) =\langle V^{\rm ext}, x \rangle +c_{\Ome, \A}$ on $\Pol_{\Ome}$,  determined by the data $(M, \Ome, \A, \T)$, is such that the extremal K\"ahler metrics $\omega_{\varphi}$ for \eqref{PDEAn} in $\mathcal{K}_{\omega_0}^{\T}$ are  the solutions of the 6-th order PDE for the unknown $\T$-invariant potential  $\varphi\in \mathcal{H}_{\omega_0}$
\[ \mathcal{S}(\omega_{\varphi}) = \ell^{\rm ext}(\mu_{\varphi}), \qquad \mu_{\varphi} = \mu_0 + d^c \varphi.\]
When $V^{\rm ext}=0$ (a condition on $(\Ome, \A)$), the extremal K\"ahler metrics  $\omega \in \Ome$ satisfy \eqref{PDEAn}.
These are the analogue of constant scalar curvature K\"ahler metrics.  If $(\Ome, \A)$ are such that  $V^{\rm ext}=0$ and  $c_{\Ome, \A}=0$ (i.e. $(c_1(M)^2 + \A^2)\cdot \Ome^{n-2}=0$), then the extremal K\"ahler metrics in $2\pi \Ome$ of positive scalar curvature are precisely the solutions of \eqref{Box}.  These are analogues of scalar-flat K\"ahler metrics.
\end{rmk}

We next prove an analog of LeBrun--Simanca~\cite{LS} openness result for extremal K\"ahler classes.
\begin{prop}\label{p:LeBrun-Simanca} Suppose $(\Ome_0, \A_0) \in H^2(M, \R)$ satisfy \eqref{topo-n} and $\omega_0 \in \Ome_0$ is  an extremal K\"ahler metric for \eqref{PDEAn} with  $\Scal_{\omega_0}>0$.  Then, for any $(\Ome, \A) \in H^2(M, \R)$ which are sufficiently close to $(\Ome_0, \A_0)$ and satisfy \eqref{topo-n},  there exists an extremal K\"ahler metric $\omega\in 2\pi \Ome$ for \eqref{PDEAn} with $\Scal_{\omega}>0$.
\end{prop}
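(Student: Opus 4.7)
The approach is an implicit function theorem argument in H\"older spaces, closely paralleling the LeBrun--Simanca method \cite{LS} in the cscK case. By Proposition~\ref{p:Calabi-ext}, $\omega_0$ is invariant under a maximal compact subgroup $K < \Aut_r(M,J)$; the extremal vector field $V^{\rm ext}=V^{\rm ext}(\Ome_0, \A_0)$ of Lemma~\ref{l:extremal-field} lies in the Lie algebra $\tor$ of a maximal compact torus $\T \leq K$. I work throughout in the $\T$-invariant category. Fix a smooth slice $(\Ome, \A) \mapsto \omega_\Ome$ of $\T$-invariant K\"ahler representatives $\omega_\Ome \in 2\pi\Ome$ defined for $(\Ome, \A)$ in a neighborhood $U$ of $(\Ome_0, \A_0)$ in the affine subspace of $H^2(M,\R)^2$ cut out by \eqref{topo-n}, with $\omega_{\Ome_0}=\omega_0$. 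By Lemma~\ref{l:extremal-field}, $V^{\rm ext}(\Ome, \A) \in \tor$ and the affine function $\ell^{\rm ext}_{\Ome, \A}$ of Remark~\ref{r:Futaki-Mabuchi} depend smoothly on the parameter. Writing $\omega_{\varphi, \Ome} := \omega_\Ome + dd^c\varphi$ and $\mu_{\varphi, \Ome} := \mu_\Ome + d^c\varphi$, define
\begin{equation*}
F : C^{k+6, \alpha}(M)^{\T} \times U \longrightarrow C^{k, \alpha}(M)^{\T}, \qquad F(\varphi, \Ome, \A) := \mathcal{S}(\omega_{\varphi, \Ome}) - \ell^{\rm ext}_{\Ome, \A}(\mu_{\varphi, \Ome}),
\end{equation*}
so that by Lemma~\ref{l:extremal-reduction} a $\T$-invariant zero of $F$ corresponds to an extremal K\"ahler metric for \eqref{PDEAn} in $2\pi \Ome$, with $F(0, \Ome_0, \A_0)=0$ by hypothesis.

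The next step is to analyze $L := (D_\varphi F)_{(0, \Ome_0, \A_0)}$. By Proposition~\ref{prop:linearization}, $L$ is a linear differential operator of order $6$, elliptic because its leading term in \eqref{eq:linearization} is $-2\delta\delta\nabla^{-}\delta\nabla^{-} d$ with principal symbol proportional to $|\xi|^6$. Its formal self-adjointness and the description of its kernel come conceptually from the momentum map picture of Section~3 applied to the (modified) $\T$-relative Mabuchi functional adapted to \eqref{PDEAn}, in the spirit of \cite{lahdili,FM}: the variational identity \eqref{variational-formula}, applied at the base $\omega_0$ where $\mathcal{S}(\omega_0) = \ell^{\rm ext}_{\Ome_0, \A_0}(\mu_0)$, yields for $h, \dot\varphi \in C^\infty(M)^{\T}$
\begin{equation*}
\int_M h\,L(\dot\varphi)\,\omega_0^{[n]} \;=\; \boldsymbol{g}_{\omega_0, J}\!\left(\mathcal{L}_{J\nabla^{\omega_0}h}J,\; \mathcal{L}_{J\nabla^{\omega_0}\dot\varphi}J\right),
\end{equation*}
which is manifestly symmetric in $(h, \dot\varphi)$. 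Taking $h = \dot\varphi$, the right-hand side is non-negative (Theorem~\ref{thm:GIT} combined with $\Scal_{\omega_0} > 0$), vanishing exactly when $\mathcal{L}_{J\nabla^{\omega_0}\dot\varphi}J=0$, i.e.\ $\dot\varphi$ is an $\omega_0$-Killing potential. Maximality of $\T$ then forces $\dot\varphi \in P_{\omega_0}$, so $\ker L = P_{\omega_0}$ on $\T$-invariant functions and, by self-adjointness, also $\operatorname{coker} L = P_{\omega_0}$.

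With $L$ self-adjoint elliptic Fredholm and $\ker L = \operatorname{coker} L = P_{\omega_0}$, the LeBrun--Simanca machinery runs in the standard way. The defining projection property of $V^{\rm ext}(\Ome, \A)$ in Lemma~\ref{l:extremal-field} ensures that $F(\varphi, \Ome, \A)$ is automatically $L^2(\omega_{\varphi, \Ome})$-orthogonal to all $\tor$-Killing potentials of $\omega_{\varphi, \Ome}$, and hence, modulo manageable perturbations, to $P_{\omega_0}$. Restrict the domain to the $L^2(\omega_0)$-orthogonal complement $P_{\omega_0}^\perp \subset C^{k+6, \alpha}(M)^{\T}$, which is a slice transverse to the action of $\T$ on K\"ahler potentials; on this slice $L$ becomes an isomorphism onto the corresponding codomain slice. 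The implicit function theorem then yields a smooth family of $C^{k+6, \alpha}$ solutions $\varphi(\Ome, \A)$ for $(\Ome, \A) \in U$ sufficiently close to $(\Ome_0, \A_0)$. Elliptic regularity upgrades these to $C^\infty$, and both $\omega_{\varphi, \Ome} > 0$ and $\Scal_{\omega_{\varphi, \Ome}} > 0$ persist by continuity for parameters near $(\Ome_0, \A_0)$.

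The main technical obstacle is the rigorous verification of self-adjointness and of the kernel identification $\ker L = P_{\omega_0}$ at a genuinely extremal (non-cscK) base metric, together with the claim that $F$ lands (to leading order) in $P_{\omega_0}^\perp$. Both reduce to correctly setting up the $\T$-relative Mabuchi functional adapted to the varying data $(V^{\rm ext}(\Ome, \A), c_{\Ome, \A})$ of \eqref{PDEAn}, in analogy with the cscK constructions in \cite{lahdili,FM}, and reading off from the variational identity \eqref{variational-formula} that the Hessian at the extremal point factors through $\dot\varphi \mapsto \mathcal{L}_{J\nabla^{\omega_0}\dot\varphi}J$ via the positive-definite formal K\"ahler metric $\boldsymbol{g}$ of Theorem~\ref{thm:GIT}. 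Once this factorization is in place, the remainder of the proof is standard.
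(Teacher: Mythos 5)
Your proposal is correct and follows essentially the same route as the paper's proof: a $\T$-invariant implicit function theorem argument for the reduced operator $\mathcal{S}(\omega_\varphi)-\ell^{\rm ext}_{\Ome,\A}(\mu_\varphi)$, with the kernel of the linearization identified as $P_{\omega_0}$ via the positivity of $\boldsymbol{g}$ under $\Scal_{\omega_0}>0$ and the maximality of $\T$, followed by the a posteriori argument that the $P_{\omega_0}$-component of the output vanishes and elliptic bootstrapping. The only cosmetic difference is your use of H\"older rather than Sobolev spaces, and note that $L$ is a pseudo-differential (not differential) operator because of the Green-operator term in \eqref{eq:linearization}, though that term is of lower order and does not affect ellipticity.
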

\begin{proof} This is an application of the IFT.  We shall use the formula for the linearization $(D_{\varphi}\mathcal{S})(\dot \varphi)$ of $\mathcal{S}({\varphi})$ given by Proposition~\ref{prop:linearization}.
We shall consider $\T$-invariant K\"ahler metrics $\omega \in 2\pi \Ome$ (see Lemma~\ref{p:Calabi-ext}) and $\T$-invariant functions in $\mathcal{H}_{\omega_0}$, and normalize the $\T$-momentum $\mu_{\omega}$ of $\omega$ by
\[ \int_M \mu_{\omega} \omega^{[n]}= 0 \in \tor^*.\]
This gives rise to a canonically normalized polytope $\Pol_{\Ome}$ associated to $\Ome$,  and to an extremal affine-linear function $\ell^{\rm ext}(x)$ associated to $(\Ome, \A)$ (see Remark~\ref{r:Futaki-Mabuchi}).  We can then define the reduced operator 
\[ \mathring{\mathcal{S}}(\varphi):= \mathcal{S}(\varphi) -\ell^{\rm ext}(\mu_{\varphi}), \]
 and notice that, by Proposition \ref{prop:linearization}, if $\omega$ is extremal for \eqref{PDEAn}, i.e. $\mathring{\mathcal{S}}(\omega) = 0$, then  
\begin{equation} \label{lineraization}
\begin{split} (D_{\omega}\mathring{\mathcal{S}})(\dot \varphi)&=-2\delta_\omega\delta_\omega\left(\nabla^{\omega,-}
\left(\delta_\omega\nabla^{\omega,-}d\dot{\varphi}\right)\right)\\ &-\delta_\omega\delta_\omega\left(R_\omega\nabla^{\omega,-}d\dot{\varphi}\right)  + 16 \delta_{\omega} \delta_{\omega} \left(\al_{\omega} \circ\mathbb{G}_{\omega}(\al_{\omega} \circ \nabla^{\omega, -} d\dot \varphi)^{\rm skew}\right).
\end{split}\end{equation}

We denote by $\mathcal{S}_{\Ome, \A}(\omega)$ the expression \eqref{calS}, emphasizing the dependence on $(\Ome, \A)$  (which we are going to vary), and consider its  reduced part
\[ \mathring{\mathcal{S}}_{\Ome, \A}(\omega) :=\mathcal{S}_{\Ome, \A}(\omega) -\ell^{\rm ext}_{\Ome, \A}(\mu_{\omega}). \]
We can thus define a map
\[ \mathring{\mathcal{S}}: \mathcal{U} \times U  \to C^{\infty}(M)^{\T}, \qquad \mathring{\mathcal{S}}(\omega, \al, \varphi):=\mathring{\mathcal{S}}_{\frac{[\omega]}{2\pi}, \frac{[\al]}{2\pi}}(\omega + dd^c \varphi), \]
where:
\begin{itemize}
\item   $0\in \mathcal U \subset C^{\infty}(M)^{\T}$ is an open subset in the Fr\'echet topology;
\item $(\omega_0, \al_{\omega_0}) \in U$ is a relative open subset of the hyper-surface $\mathcal{H}=\{(\omega, \al) \in \mathcal{H}^{1,1}_{\omega_0} \times \mathcal{H}^{1,1}_{\omega_0} \, | \, \int_M \al \wedge \omega^{[n-1]}  =0\}$,  where  $\mathcal{H}^{1,1}_{\omega_0}$ stands for the vector space of harmonic $(1,1)$-forms with respect to $\omega_0$;
\item $\mathcal{U}$ and $U$ are so chosen that for any  $(\varphi, \omega, \al) \in \mathcal{U}\times U$, $\tilde \omega_{\varphi} := \omega + dd^c \varphi >0$ and $\Scal_{{\tilde \omega}_{\varphi}}>0$.
\end{itemize}
 $\mathring{\mathcal{S}}$ can be extended as a Fr\'echet differentiable map between domains in the Sobolev spaces
\[ \mathring{\mathcal{S}} : \mathcal{V} \times U \to W^{2,k}(M)^{\T}, \qquad \mathcal{V} \subset W^{2, k+6}(M)^{\T}.\]
We next denote by $\mathring{W}^{2, p}(M)^{\T}$ the $L^2(M, \omega_0)$-orthogonal complement in  $W^{2, p}(M)^{\T}$ of the subspace $P_{\omega_0}$ of $\omega_0$-Killing potentials of vector fields in $\tor$ (i.e. $P_{\omega_0} = \{ \ell(\mu_{\omega_0})  \, | \, \ell(x) \, \textrm{affine-linear on} \, \tor^*\}$) and let $\mathring{\mathcal{V}} = \mathcal{V}\cap \mathring{W}^{2, k+6}(M)^{\T}$. Notice that
\[ \mathring{\mathcal{S}}(0, \omega_0, \al_{\omega_0}) : \mathring{\mathcal{V}} \times U \to \mathring{W}^{2,k}(M)^\T \]
by construction. Furthermore, by \eqref{lineraization}, the $\mathring{\mathcal{V}}$-partial derivative of $\mathring{\mathcal{S}}$ at $(\omega_0, \al_0)$ is
\[\begin{split}  \mathbb{S}(\dot \varphi):= (D_{\varphi} \mathring{\mathcal{S}})(\dot \varphi, \omega_0, \al_0) =&-2\delta_{\omega_0}\delta_{\omega_0}\left(\nabla^{\omega_0,-}
\left(\delta_{\omega_0}\nabla^{\omega_0,-}d\dot{\varphi}\right)\right)\\ &-\delta_{\omega_0}\delta_{\omega_0}\left(R_\omega\nabla^{\omega,-}d\dot{\varphi}\right)  +16 \delta_{\omega_0} \delta_{\omega_0} \left(\al_{\omega_0} \circ\mathbb{G}_{\omega_0}(\al_{\omega_0} \circ \nabla^{\omega_0, -} d\dot \varphi)^{\rm skew}\right).
\end{split} \]
Notice that 
\[ \mathbb{S} : \mathring{W}^{2,k+6}(M)^{\T} \to \mathring{W}^{2, k}(M)^{\T}\]
is a linear,  $L^2(M, \omega_0)$-self-adjoint 6th 
order  elliptic pseudo-differential operator.  Using $\Scal_{\omega_0}>0$, the kernel  of $\mathbb{S}$ consists of $\T$-invariant functions in $\mathring{W}^{2,k+6}(M)^{\T}$ which satisfy $\delta_{\omega_0} \delta_{\omega_0} \nabla^{\omega_0,-}(d\dot \varphi) =0$, i.e. which are Killing potentials for Killing vector fields in the center of $\tor$. By maximality of $\T$, such Killing fields are trivial, so $\mathbb{S}$ is invertible.

By the implicit function theorem applied to $({\rm Id} -\Pi_{\omega_0})(\mathring{\mathcal{S}})$, where $\Pi_{\omega_0}$ is the $L^2(M, \omega_0)$-orthogonal projection of $\mathring{W}^{2, k}(M)$ to $P_{\omega_0}$, we obtain a differentiable family of K\"ahler metrics $\omega_{\Ome, \A}=\omega + dd^c \varphi_{\alpha, \beta}$ (recall $\alpha :=\frac{[\omega]}{2\pi}, \, \beta :=\frac{[\theta]}{2\pi}$) of regularity $C^r(M), \, r\geq 6$ for which $\mathring{S}_{\Ome, \A}(\omega_{\alpha, \beta})$ belongs to $P_{\omega_0}$. These metrics must satisfy  $\mathring{S}_{\Ome, \A}(\omega_{\alpha, \beta})=0$  for $(\Ome, \A)$ sufficiently close to $(\Ome_0, \A_0)$ as, by definition,   $\mathring{S}_{\Ome, \A}(\omega_{\alpha,\beta})$ projects trivially to $P_{\omega_{\alpha,\beta}}$ with respect to the $L^2(M, \omega_{\alpha,\beta})$-orthogonal projection. 

The proof concludes by showing that $\omega_{\alpha, \beta}$ are smooth via a standard boot-strapping argument, using the ellipticity of ${\mathcal{S}}_{\Ome, \A}$  and the fact that $\ell^{\rm ext}_{\alpha, \beta}(x)$ is smooth whereas $\mu_{\omega_{\alpha,\beta}} =\mu_{\omega} + d^c\varphi_{\alpha,\beta}$ by the normalization condition for $\Pol_\Ome$.
\end{proof}

\section{Applications to BHE 3-folds: Proof of Theorem~\ref{thm:main2}}
We shall assume through this section that $(M, J)$ is a compact orbifold complex surface and $(\Ome, \A)$ satisfy
\begin{equation}\label{c=0} c_{\Ome, \A} = 4(c_1^2(M)+ \A^2)=0.\end{equation}
We are thus solving \eqref{PDE} on $M$,  which in turn will produce non-K\"ahler BHE 3-folds via the inverse of Theorem~\ref{thm:reduction}.
\subsection{Ruled complex surfaces with vanishing Futaki invariant}
Here we consider the minimal ruled complex surfaces  described in Lemma~\ref{rough-classification} (1)-(2) for which the underlying vector bundle $E=L_1 \oplus L_2$ splits as a sum of line bundles and will characterize those for which the Futaki invariant associated to \eqref{PDEB} (see Proposition~\eqref{p:Futaki}) vanishes. More generally, we consider complex orbifold surfaces $M$ which are the total space of  a $\PP^{1}_{v,w}$-bundle over a Riemann surface $\Sigma$, where $\PP^1_{v,w}$ (with $v,w\in \Z_{>0}$ co-prime) denotes the weighted projective line (seen as a $1$-dimensional toric orbifold) attached through a principal $S^1$-bundle $P$ over $\Sigma$,  endowed with a connection $1$-form $\eta_{P}$  with curvature
\[ d\eta_{P} = k\omega_{\Sigma},\]
where $\omega_{\Sigma}$ is a Riemannian metric of constant scalar curvature $s_{\Sigma}$ on $\Sigma$. We can then use the Calabi Ansatz construction~\cite{HS,ACGTF-inventiones} to represent (a scale of) $\Ome$ with special K\"ahler metrics, allowing also to explicitly write down the the harmonic $2$-form $\al$ in \eqref{Box}.

To this end,  we consider K\"ahler metrics of the form
\begin{equation} \label{ansatz1}
g=(kz+ c)g_{\Sigma} + \frac{1}{\Theta(z)} dz^2 + \Theta(z) \eta_P^2, \qquad \omega = (kz+c)\omega_{\Sigma} + dz\wedge \eta_{P}, 
\end{equation}
where:
\begin{itemize}
\item $\Theta(z)>0$ is a smooth function on defined on the interval $(-1, 1)$,  which satisfies the end points conditions
\begin{equation} \label{boundary-k} \Theta(-1)=0=\Theta(1), \qquad \Theta'(-1) = 2/w, \qquad \Theta'(1)=-2/v. \end{equation}
\item $(kz+ c)$ is a strictly positive function on $[-1,1]$.
\end{itemize}
The ansatz \eqref{ansatz1}  describes an $S^1$-invariant K\"ahler structure on $\mathring{M}:= (-1, 1) \times P \to \Sigma$, with corresponding momentum map $z: M \to (-1, 1)$. It is well-known (see e.g. \cite{HS,ACGTF-jdg}) that the end-point conditions \eqref{boundary-k} for $\Theta$ ensure that this K\"ahler metric compactifies on $M$ as a smooth orbifold K\"ahler metric.
Thus, the ansatz~\eqref{ansatz1}  describes an orbifold K\"ahler structure on the  orbifold
\[ M=S_{v, w, k}(\Sigma) := \PP^1_{v, w} \times_{S^1} P \to \Sigma, \]
which has a Killing $S^1$-symmetry with momentum map $z: S_{p,q, k}(\Sigma) \to [-1, 1]$. Furthermore, as explained in \cite{ACGTF-inventiones}, the constant $k$ fixes the principal bundle $P$ (and therefore the topology of $S_{v, w, k}(\Sigma)$) whereas the constant $c$ parametrizes the K\"ahler class $2\pi\Ome_c=[\omega]$ on $S_{v,w, k}(\Sigma)$: up to a positive scale, $\alpha_c$ exhaust the K\"ahler cone of $S_{v,w,k}(\Sigma)$. 

\bigskip
The computations of Ricci form and scalar curvature of \eqref{ansatz1} are made in  e.g. \cite{ACG-jdg}: Letting
\[ V(z):= \Theta(z) (kz + c), \]
we have 
\[ 
\begin{split}
\rho(\omega) &= \rho_{\Sigma} -\frac{1}{2} dd^c \log V =-\frac{1}{2}\left(\frac{kV'(z)}{(kz + c)} -s_{\Sigma}\right)\omega_{\Sigma} - \frac{1}{2}\left(\frac{V'(z)}{(kz + c)}\right)' dz\wedge \theta , \\\Scal(\omega)&= 4 \left(\frac{\rho(\omega)\wedge \omega}{\omega\wedge \omega}\right)= \frac{s_{\Sigma}}{c+kz}-\frac{V''(z)}{(kz + c)},
    \end{split} \]
whereas 
\[ \al_\lambda := \frac{\lambda}{2}\Big(\frac{1}{(kz + c)}\omega_{\Sigma} - \frac{1}{(kz+c)^2}dz \wedge \theta \Big) \]
is a primitive closed $(1,1)$-form. Furthermore,  for a smooth function $f=f(z)$ we compute
\[ dd^c f = \left(\frac{f'(z)V(z)}{(kz+ c)}\right)' dz \wedge \theta +k\left(\frac{f'(z)V(z)}{(kz+ c)}\right) \omega_{\Sigma}.\]
Putting all these together, we obtain the following extension for $\mathcal{S}(\omega)$
\begin{equation} \label{ODE-k}
\begin{split}
4\mathcal{S}(\omega)&=-\left[\left(\frac{s_{\Sigma}}{c+kz} - \frac{V''}{kz+c}\right)'\left(\frac{V}{kz+c}\right) \right]' + \frac{k}{(kz + c)}\left[\left(\frac{s_{\Sigma}}{c+kz} - \frac{V''}{kz+c}\right)'\left(\frac{V}{kz+c}\right) \right] \\
& +\frac{1}{kz+c}\left(\frac{kV'}{kz+c} - s_{\Sigma}\right)\left(\frac{V'}{kz +c}\right)' - \frac{\lambda^2}{(kz + c)^4}.
\end{split}
\end{equation}
\begin{lemma}\label{l:k neq 0} Suppose $k\neq 0$ and $(2\pi \Ome_c=[\omega], 2\pi \A_{\lambda}=[\al_{\lambda}])$ satisfy \eqref{c=0}. Then the Futaki invariant \eqref{e:Futaki} associated to \eqref{PDEAn} on $(S_{v,w,k},\Ome_c, \A_{\lambda})$ is non-zero. 
\end{lemma}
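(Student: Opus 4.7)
The strategy is to compute the Futaki invariant on the $S^1$-generator $Z$ of the Calabi ansatz~\eqref{ansatz1}, reduce it to a one-dimensional integral over $[-1,1]$ by means of Proposition~\ref{p:Futaki}, and extract from it a nonzero explicit quantity proportional to $k$.

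Let $Z$ be the infinitesimal generator of the $S^1$-action on $S_{v,w,k}(\Sigma)$, viewed as a real holomorphic vector field on $(M,J)$. Since $c_{\Ome_c,\A_\lambda}=0$ by assumption \eqref{c=0}, adding a constant to a real holomorphy potential $h_\omega$ of $Z$ leaves $\mathcal{F}(Z)$ in \eqref{e:Futaki} unchanged, so I take $h_\omega = z$. By Proposition~\ref{p:Futaki}, $\mathcal{F}(Z)$ is independent of the K\"ahler representative in $2\pi\Ome_c$, so I evaluate it on an arbitrary ansatz metric \eqref{ansatz1}. Using $\omega^{[2]} = (kz+c)\,\omega_\Sigma\wedge dz\wedge \eta_P$ and integrating over the base and fiber, the formula \eqref{e:Futaki} reduces to
\[
\mathcal{F}(Z) \;=\; -C_\Sigma\int_{-1}^{1} z\cdot 4\mathcal{S}(\omega)\cdot(kz+c)\,dz,
\]
for a strictly positive constant $C_\Sigma$ depending only on $\mathrm{Vol}(\Sigma)$ and the $S^1$-normalization.

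Substituting \eqref{ODE-k} for $4\mathcal{S}(\omega)$ splits the integrand into a ``scalar-curvature'' contribution (the first three terms of \eqref{ODE-k}) and the ``primitive $(1,1)$'' contribution proportional to $\lambda^2$. The latter is evaluated by the elementary change of variables $y=kz+c$:
\[
I_\lambda \;:=\; -\lambda^2\int_{-1}^{1}\frac{z\,dz}{(kz+c)^3} \;=\; \frac{2k\lambda^2}{(c^2-k^2)^2},
\]
which is manifestly $V$-independent, with sign equal to that of $k\lambda^2$. For the scalar-curvature piece $I_{\mathrm{cscK}}(k,c,v,w,s_\Sigma)$, I perform iterated integration by parts in $z$, using $V(\pm 1)=0$ and the endpoint conditions \eqref{boundary-k} to eliminate boundary terms. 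Since $\mathcal{F}(Z) = -C_\Sigma(I_{\mathrm{cscK}}+I_\lambda)$ must be $V$-independent by Proposition~\ref{p:Futaki}, while $I_\lambda$ is already so, every bulk $V$-dependent piece in $I_{\mathrm{cscK}}$ must cancel, leaving an explicit closed-form rational function of the parameters. This computation parallels the classical Calabi--Futaki invariant calculation on Hirzebruch-like ruled surfaces and produces $I_{\mathrm{cscK}}$ as a nonzero rational function of $k,c,v,w,s_\Sigma$ that is again proportional to $k$.

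The upshot is that $\mathcal{F}(Z) = -C_\Sigma\,k\,\Phi(k,c,v,w,s_\Sigma,\lambda^2)$ for an explicit rational $\Phi$, and it remains to show $\Phi\neq 0$ on the topological hypersurface defined by \eqref{c=0} (which fixes $\lambda^2$ in terms of the remaining parameters). The cleanest argument is to verify that $I_{\mathrm{cscK}}$ and $I_\lambda$ have the same sign throughout the admissible range (for which $kz+c>0$ on $[-1,1]$ and $\Scal(\omega)>0$), so that no cancellation occurs; alternatively one may turn this into a finite polynomial verification by choosing a specific profile such as $V(z) = (1-z^2)(a+bz)$ with $(a,b)$ matched to \eqref{boundary-k}, since by Proposition~\ref{p:Futaki} the value of $\mathcal{F}(Z)$ is the same for every admissible $V$. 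The main obstacle is the algebraic bookkeeping of the iterated integration by parts extracting $I_{\mathrm{cscK}}$, and the subsequent sign analysis ensuring that $I_{\mathrm{cscK}}$ and $I_\lambda$ do not cancel; both are elementary but tedious.
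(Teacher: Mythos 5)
Your setup coincides with the paper's: you test the Futaki character on the fibrewise generator, invoke Proposition~\ref{p:Futaki} to evaluate it on an arbitrary Calabi-ansatz metric, and reduce to a one-dimensional integral of $4\mathcal{S}(\omega)$ against $z(kz+c)\,dz$; your evaluation of the $\lambda^2$-term is correct. The gap is in how you conclude. The paper's proof consists of two explicit closed-form evaluations --- the integral against $(kz+c)\,dz$, which is the constraint \eqref{c=0} and yields \eqref{Futaki1} fixing $\lambda^2$ in terms of $(k,c,v,w,s_\Sigma)$, and the integral against $(kz+c)^2\,dz$, whose hypothetical vanishing yields \eqref{Futaki2} --- followed by the algebraic observation that \eqref{Futaki1} and \eqref{Futaki2} together force $\lambda^2=-(c^2-k^2)^2\left(\tfrac1v+\tfrac1w\right)^2\le 0$ and $c^2=k^2$, contradicting $kz+c>0$. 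You carry out neither the second evaluation nor the elimination; you replace them with the assertion that $I_{\mathrm{cscK}}$ and $I_\lambda$ ``have the same sign throughout the admissible range,'' and this is false. Setting $p=\tfrac1v+\tfrac1w$, $q=\tfrac1v-\tfrac1w$ and substituting \eqref{Futaki1}, the paper's formulas give $I_\lambda=\tfrac{2kp}{c}(kq+s_\Sigma)$ while $I_{\mathrm{cscK}}+I_\lambda=\tfrac{2}{c}(kp+cq)(cp+kq+s_\Sigma)$, hence $I_{\mathrm{cscK}}=2\bigl[k(p^2+q^2)+q(cp+s_\Sigma)\bigr]$; for $k>0$, $v>w$ and $c$ large these two quantities have opposite signs, so cancellation does occur and non-vanishing is a genuinely delicate algebraic fact rather than a consequence of positivity of the two pieces. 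Your other structural claim, that $\mathcal{F}(Z)=-C_\Sigma\,k\,\Phi$ is proportional to $k$, is also incorrect: for $v\neq w$ the invariant is non-zero at $k=0$, which is precisely the content of the next lemma in the paper.

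In short: right strategy, but the decisive step --- the explicit integration by parts producing \eqref{Futaki2} and the incompatibility argument with \eqref{Futaki1} under $c>|k|$ and $\lambda^2\ge 0$ --- is missing, and the shortcut you propose in its place cannot work because the ``cscK'' and ``$\lambda$'' contributions do not have a fixed relative sign. Your fallback of evaluating on the cubic profile $V=(1-z^2)(a+bz)$ would in principle reproduce the paper's formulas, since after integration by parts the answer depends only on the boundary data \eqref{boundary-k}; but even then one must still combine the resulting expression with \eqref{Futaki1} and the positivity constraint $c>|k|$ to exclude the real zero locus of the resulting polynomial --- which, as the factorization $\tfrac{2k}{c}(kp+cq)(cp+kq+s_\Sigma)$ shows, is non-empty in the naive parameter range --- and that is exactly the step you have not done.
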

\begin{proof}
We first use that the integral of \eqref{ODE-k} on [-1,1] against $(kz+c)dz$ is zero. This corresponds to the topological condition \eqref{c=0} we imposed on $(\Ome_c, \A_{\lambda})$ in order to ensure $c_{\Ome_c, \A_{\lambda}}=0$.  Integration by parts using  \eqref{boundary-k} gives
 \begin{align*}
   \int_{-1}^1 \frac{\lambda^2}{(kz+c)^3}dz=\lambda^2 \Big( -\frac{1}{2k(kz+c)^2} \Big)\Big|_{-1}^1=\frac{2c \lambda^2}{(c^2-k^2)^2}, 
\end{align*}
\begin{align*}
    \int_{-1}^1 \left[\left(\frac{s_{\Sigma}}{kz+c} - \frac{V''}{kz+c}\right)'\left(\frac{V}{kz+c}\right) \right]' (kz+c)dz=-\int_{-1}^1 \left(\frac{s_{\Sigma}}{kz+c} - \frac{V''}{kz+c}\right)'\left( \frac{kV}{kz+c} \right)dz,
\end{align*}
\begin{align*}
    &\int_{-1}^1 \left(\frac{kV'}{kz+c} - s_{\Sigma}\right)\left(\frac{V'}{kz +c}\right)' dz=\left( \frac{k}{2}\left( \frac{V'}{kz+c}-\frac{s_{\Sigma}}{k} \right)^2  \right)\Bigg|_{-1}^1
    \\&=\frac{1}{2k}\left(\frac{-2k}{v}-s_{\Sigma} \right)^2-\frac{1}{2k}\left(\frac{2k}{w}-s_{\Sigma} \right)^2
    =2\left(  \frac{1}{v}+\frac{1}{w} \right)\left(k\left(  \frac{1}{v}-\frac{1}{w}\right)+s_{\Sigma} \right).
\end{align*}
We conclude that
\begin{align}\label{Futaki1}
    \lambda^2 =\frac{(c^2-k^2)^2}{c}\left(  \frac{1}{v}+\frac{1}{w} \right)\left(k\left(  \frac{1}{v}-\frac{1}{w}\right)+s_{\Sigma} \right).
\end{align}
Next, we integrate \ref{ODE-k} on $[-1,1]$ against $(kz+c)^2 dz$. This  computes (up to a non-zero scale) the Futaki invariant \eqref{e:Futaki} of the holomorphic vector field $kJK$, where $K= -\omega^{-1}(dz)$ is the Killing vector field defined by the $S^1$ fibre-wise action of $P_k$.  Integration by parts using \eqref{boundary-k} gives
\begin{align*}
    \int_{-1}^1 \frac{\lambda^2}{(kz+c)^2}dz=\frac{2\lambda^2}{c^2-k^2},
\end{align*}
\begin{align*}
   \int_{-1}^1& (kz+c)^2\left[\left(\frac{s_{\Sigma}}{c+kz} - \frac{V''}{kz+c}\right)'\left(\frac{V}{kz+c}\right) \right]' + kV\left(\frac{s_{\Sigma}}{c+kz} - \frac{V''}{kz+c}\right)'dz
   \\&=\int_{-1}^1 \left(\frac{s_{\Sigma}}{c+kz} - \frac{V''}{kz+c}\right)'' (kz+c)V+\left(\frac{s_{\Sigma}}{c+kz} - \frac{V''}{kz+c}\right)' (kz+c)V' dz
   \\&=-\int_{-1}^1 kV\left(\frac{s_{\Sigma}}{c+kz} - \frac{V''}{kz+c}\right)' dz
   =\int_{-1}^1 kV'\left(\frac{s_{\Sigma}}{c+kz} - \frac{V''}{kz+c}\right) dz.
\end{align*}
Assuming $\mathcal{F}(-kJK)=0$, further integration by parts yields:
\begin{align*}
    \frac{2\lambda^2}{c^2-k^2}&=-\int_{-1}^1kV'\left(\frac{s_{\Sigma}}{c+kz} - \frac{V''}{kz+c}\right) dz+\int_{-1}^1 \big(kV'-(kz+c)s_{\Sigma}\big)\left( \frac{V'}{kz+c}\right)' dz
    \\&=\int_{-1}^1 \frac{2kV'V''}{kz+c}-\frac{k^2(V')^2}{(kz+c)^2}-V''s_{\Sigma} dz
    =\left(\frac{k(V')^2}{kz+c}\right)\Bigg|_{-1}^1 -s_{\Sigma}(V')\Big|_{-1}^1
    \\&=4k^2\left(\frac{1}{v^2}+\frac{1}{w^2}\right)+4kc\left(\frac{1}{v^2}-\frac{1}{w^2}\right)+s_{\Sigma}\left[ 2c\left(\frac{1}{v}+\frac{1}{w}\right)+2k\left(\frac{1}{v}-\frac{1}{w}\right)\right].
\end{align*}
In summary, 
\begin{align}\label{Futaki2}
    \frac{\lambda^2}{c^2-k^2}=2k^2\left(\frac{1}{v^2}+\frac{1}{w^2}\right)+2kc\left(\frac{1}{v^2}-\frac{1}{w^2}\right)+s_{\Sigma}\left[ c\left(\frac{1}{v}+\frac{1}{w}\right)+k\left(\frac{1}{v}-\frac{1}{w}\right)\right].
\end{align}
Combining (\ref{Futaki1}) and (\ref{Futaki2}), we get
\[
s_{\Sigma}=- k\left(\frac{1}{v}-\frac{1}{w}\right)-c\left(\frac{1}{v}+\frac{1}{w}\right), \qquad \lambda^2=-(c^2-k^2)^2\left(\frac{1}{v}+\frac{1}{w}\right)^2.
\]
It follows that $\lambda=0$ and $c=k$. However, $kz+c$ has to be strictly positive, a contradiction. We thus conclude that $\mathcal{F}(kJK)\neq 0$. 
\end{proof}
In the case when  $k=0$ \eqref{ansatz1} describes a product K\"ahler metric on $S_{v, w,0}= \PP^1_{v,w} \times \Sigma$, where the metric on $\Sigma$ is the constant scalar curvature metric $(g_{\Sigma},\omega_{\Sigma})$ whereas the K\"ahler metric on $\PP^1_{v,w}$ is the $S^1$-invariant K\"ahler structure 
\[ g_{\PP^1_{v,w}}=\frac{dz^2}{\Theta(z)} + \Theta(z) dt^2, \qquad  \omega_{\PP^1_{v,w}}= dz\wedge dt, \] described in momentum/angular coordinates $(z,t)$. In this case, \eqref{ODE-k} simplifies to 
\begin{equation}\label{ODE-k=0} 
\begin{split}
4\mathcal{S}(\omega)=-\left( \left( \frac{s_{\Sigma}}{c}-\frac{V''}{c}\right)'\left(\frac{V}{c}\right)  \right)'-\frac{s_{\Sigma}}{c}\left(\frac{V'}{c} \right)'-\frac{\lambda^2}{c^4}.
\end{split}
\end{equation}
\begin{lemma} Suppose $k= 0$, $v\neq w$, and $(2\pi \Ome_c=[\omega], 2\pi \A_{\lambda}=[\al_{\lambda}])$ satisfy \eqref{c=0}. Then the Futaki invariant \eqref{e:Futaki} associated to \eqref{PDEAn} on $(S_{v,w,0},\Ome_c, \A_{\lambda})$ is non-zero. 
\end{lemma}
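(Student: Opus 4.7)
The strategy mirrors that of Lemma~\ref{l:k neq 0}: we extract two linear relations by integrating the PDE \eqref{ODE-k=0} against two natural weights on $[-1,1]$, the first giving the topological normalization $c_{\Ome,\A}=0$ and the second the Futaki invariant of a distinguished holomorphic vector field. When $k=0$, the ansatz~\eqref{ansatz1} describes a product K\"ahler metric on $\PP^1_{v,w}\times\Sigma$, and the fibrewise Killing field $K=-\omega^{-1}(dz)$ generates a holomorphic $S^1$-action on the $\PP^1_{v,w}$-factor. Its normalized holomorphy potential is simply $h=z$, since $\int_{-1}^1 z\,dz=0$ and the K\"ahler volume form is proportional to $c\,dz$ wedged with $\omega_\Sigma\wedge\eta_P$.

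The first step is to integrate \eqref{ODE-k=0} against $c\,dz$, which by \eqref{topo-c} must produce zero. The outer total-derivative term contributes only boundary values, which vanish by $V(\pm 1)=0$; the middle term is evaluated by \eqref{boundary-k}; and the constant term gives $-2\lambda^{2}/c^{3}$. One obtains the relation
\[
\lambda^{2} \;=\; s_{\Sigma}\,c^{2}\!\left(\tfrac{1}{v}+\tfrac{1}{w}\right).
\]
Since $\lambda^{2}\ge 0$ and $\tfrac{1}{v}+\tfrac{1}{w}>0$, this \emph{forces} $s_{\Sigma}\ge 0$.

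The second step is to integrate \eqref{ODE-k=0} against $z\,c\,dz$; by the momentum map interpretation this computes the Futaki invariant $\mathcal{F}(JK)$ up to a positive constant. The $\lambda^{2}$-term vanishes by parity. The first term, integrated by parts twice using $V(\pm 1)=0$, reduces to a multiple of $[(V')^{2}]_{-1}^{1}$ and contributes $\tfrac{2}{c}(\tfrac{1}{v^{2}}-\tfrac{1}{w^{2}})$. The middle term, integrated by parts once, reduces to $[zV']_{-1}^{1}$ (the remaining $\int V'\,dz$ vanishing because $V(\pm 1)=0$) and contributes $\tfrac{2s_{\Sigma}}{c}(\tfrac{1}{v}-\tfrac{1}{w})$. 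Factoring yields
\[
\mathcal{F}(JK)\;\propto\;\tfrac{2}{c}\bigl(\tfrac{1}{v}-\tfrac{1}{w}\bigr)\!\Bigl[\bigl(\tfrac{1}{v}+\tfrac{1}{w}\bigr)+s_{\Sigma}\Bigr].
\]

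Combining the two steps: under the hypothesis $v\neq w$, vanishing of $\mathcal{F}(JK)$ would force $s_{\Sigma}=-\bigl(\tfrac{1}{v}+\tfrac{1}{w}\bigr)<0$, contradicting $s_{\Sigma}\ge 0$ obtained from the topological constraint. Hence $\mathcal{F}(JK)\neq 0$, and by Proposition~\ref{p:Futaki} no K\"ahler metric in $\mathcal{K}_{\omega_{0}}$ can solve \eqref{PDEAn}. The only place where care is needed is the bookkeeping of signs in the double integration by parts for the first term and ensuring that no hidden boundary contributions arise at the orbifold points $z=\pm 1$; this is guaranteed by the precise endpoint conditions~\eqref{boundary-k}, which render both $V$ and the relevant derivative combinations well-behaved there. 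No deeper obstacle arises; this is the anticipated adaptation of the previous lemma and relies on nothing beyond the momentum-map identification of $\mathcal{F}$.
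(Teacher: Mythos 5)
Your proof follows the paper's argument exactly: integrate \eqref{ODE-k=0} against the volume measure to obtain the topological relation between $\lambda^2$ and $s_\Sigma$ (forcing $s_\Sigma\geq 0$), then against $z$ times the measure to compute $\mathcal{F}(JK)$, whose vanishing for $v\neq w$ forces $s_\Sigma<0$ — the same contradiction the paper reaches via $\lambda^2<0$. Your constants are off by powers of the positive parameter $c$ (the paper's relations are $\lambda^2=c^3 s_\Sigma(\tfrac1v+\tfrac1w)$ and $s_\Sigma=-c(\tfrac1v+\tfrac1w)$), but since $c>0$ this bookkeeping slip does not affect the sign-based argument.
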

\begin{proof}
In this case, \eqref{Futaki1} simplifies to 
\begin{align} \label{k=0 Futaki}
    \lambda^2=c^3 s_{\Sigma}\left(\frac{1}{v}+\frac{1}{w}\right).
\end{align}
Integrating \eqref{ODE-k=0} on $[-1,1]$ against $zdz$ computes $-4\mathcal{F}(JK)$. Similarly to the case $k\neq 0$, we find that $\mathcal{F}(JK)=0$ iff
\begin{align}\label{k=0 Futaki1}
  s_{\Sigma}\left(\frac{1}{v}-\frac{1}{w}\right) =c\left(\frac{1}{v^2}-\frac{1}{w^2}\right).
\end{align}
If $v\neq w$, the relations \eqref{k=0 Futaki} and \eqref{k=0 Futaki1} yield 
\[s_{\Sigma}=-c\left(\frac{1}{v}+\frac{1}{w}\right),\quad \lambda^2=-c^4\left(\frac{1}{v}+\frac{1}{w}\right)^2,\]
a contradiction.
Therefore, if $v\neq w$,  $\mathcal{F}(JK) \neq 0$.     \end{proof}
\begin{rmk}\label{r:ruled Futaki=0} If $k=0$ and $v=w=1$, $s_{\Sigma}\geq 0$ by \eqref{k=0 Futaki}. In this case $V(z)=c - cz^2, \, c>0$ solves $\mathcal{S}(\omega)=0$ and corresponds to the product of  constant scalar curvature metrics on $\PP^1$  and $\Sigma$.
We thus have $\mathcal{F}\equiv 0$ by Proposition~\ref{p:Futaki}. These solutions of \eqref{PDE} give rise to BHE $3$-folds covered by the Samelson geometries \eqref{Samelson-3D}: in the case when $s_{\Sigma}>0$ we obtain the $SU(2)\times SU(2)$ geometry and in the case when $s_{\Sigma}=0$ we get the $SU(2)\times \R^3$ geometry.
\end{rmk}

\subsection{Proof of Theorem~\ref{thm:main2}}
\begin{proof}[Proof of Theorem~\ref{thm:main2}] The quotient space $S:=N/\T^2$ is a smooth compact complex surface $S$ endowed with a K\"ahler metric $(g_K^T, \omega_K^T)$ verifying \eqref{PDE}.  It follows that $S$ must be a complex surface listed in Lemma~\ref{rough-classification}. By \cite[Prop.2.21]{ABLS}, the hypothesis $h^{1,1}_{\rm BC}(N, J_N)=2$ yields  $b_2(S) \leq 2$. However, the K\"ahler surfaces  in Lemma~\ref{rough-classification} all have $b_+(S)=1$ and a non-positive signature (see e.g. \cite{BPV}). Thus  $b_-(S) \geq 1$ and $b_2(S)=b_+(S)+b_-(S)\geq 2$. We conclude that $b_2(S)=2$. This  singles out the surfaces described in the cases (1) and (2) of Lemma~\ref{rough-classification}. We analyse  below the two cases separately.

\bigskip
\noindent
{\bf Case 1.} $S=\PP(\mathcal{O}_{\PP^1} \oplus \mathcal{O}_{\PP^1}(k))$, $k\geq 0$. Then $S$ admits a Calabi-type K\"ahler metric given by \eqref{ansatz1} with $v=w=1, \, s_{\Sigma}=4$ in each rescaled K\"ahler class. By Lemma~\ref{l:k neq 0}, we must have $k=0$. By Corollary~\ref{c:P^1xP^1}, we conclude that the K\"ahler metric 
is a product of constant Gauss curvature metrics on each factor (see Remark~\ref{r:ruled Futaki=0}). This leads to the Samelson geometry $SU(2)\times SU(2)$ on $N$.

\bigskip
\noindent
{\bf Case 2.} $S=\PP(E) \to \Sigma$ is a ruled surface over an elliptic curve. In this case $c_1^2(S)=0$, so we are  looking for solutions of \eqref{PDEA} with $\A=0$.
Atiyah~\cite{atiyah} has shown that there are 3 possible cases for $S$:

\smallskip
\noindent
{\bf Case 2(a).} $S=\PP(\mathcal{O}_{\Sigma} \oplus L_k)$ for a line bundle $L_k \to \Sigma$ of degree $k\geq 0$. Such surfaces admit in each suitably rescaled K\"ahler class metrics given by the Calabi ansatz with $s_{\Sigma}=0$.  Lemma~\ref{l:k neq 0} yields $k=0$, i.e.  $\mathcal{O}_{\Sigma} \oplus L_0$ is  polystable. By Narasimhan--Seshadri Theorem~\cite{NS}, $\cO_{\Sigma} \oplus L_0$ is a projectively-flat Hermitian bundle,  defined by a representation $\epsilon : \pi_1(\Sigma) \to PU(2)$, 
where $\pi_1(\Sigma) \cong \mathbb{Z} \oplus \mathbb Z$ is the fundamental group of the elliptic curve $\Sigma$.
It follows that $S$ admits locally symmetric solutions of \eqref{PDE}, corresponding to  regular complex BHE $3$-folds  $N$ covered by $SU(2)\times \R^3$. We  thus need to obtain  the uniqueness of the locally symmetric solution. Suppose $\omega$ is any solution of \eqref{PDEA}. If $L_0=\cO_{\Sigma}$ is trivial, $S= \Sigma\times \PP^1$  is a product ruled surface. By Proposition~\ref{p:Calabi-Lichne-Mats}, the K\"ahler metric must be invariant under the action of $PU(2) < \Aut_{r}(S)$. Writing  point-wisely
\[ \omega({c, p}) = \omega_{\Sigma}(c,p) \oplus  \omega_{\PP^1}(c,p), \qquad (c,p)\in \Sigma \times \PP^1, \]
and using that $PU(2)$ acts transitively on $\PP^1$, we get that $\omega_{\Sigma}(c,p)=\omega_{\Sigma}(c)$ is a K\"ahler metric on $\Sigma$ independent of $p$ whereas
$\omega_{\PP^1}(c,p)= f(c) \omega_{FS}(p)$ is a multiple of the Fubini--Study metric on $\PP^1$ (the unique riemannian metric up to scale on $\PP^1$ invariant under $PU(2)$). By the closeness of $\omega$, we conclude $f(c)=const$, so $\omega = \omega_{\Sigma} + \omega_{FS}$ is a product K\"ahler metric. In this case, 
\[ \rho(\omega) = \frac{\Scal_{\Sigma}}{2} \omega_{\Sigma} + \frac{\Scal_{FS}}{2} \omega_{FS}, \qquad \Scal(\omega) = \Scal_{\Sigma} + \Scal_{FS}, \] 
so \eqref{PDEA} reduces to 
\[ -\Delta \Scal_{\Sigma} = \frac{1}{2}\Scal_{FS} \Scal_{\Sigma}.\]
Integrating by parts the above equality against $\Scal_{\Sigma}$ we get $\Scal_{\Sigma}\equiv 0$, so the metric is locally symmetric.

Suppose now that $L_0$ is a non-trivial degree zero line bindle. In this case, by \cite[Thm.~3]{Suwa}, the Lie algebra $\mathfrak{h}(S)$ of holomorphic vector fields is $2$-dimensional.  Its reduced ideal $\mathfrak{h}_{\rm red}(S)$, which consists of the holomorphic vector fields with zeroes, projects trivially to the base $\Sigma$: it follows that $\mathfrak{h}_{\rm red}(S)= H^0(\Sigma, \mathfrak{sl}(\cO_{\Sigma} \oplus L_0))\cong \C \oplus H^0(\Sigma, L_0) \oplus H^0(\Sigma, L_0^{-1})$.  As $L_0$ is non-trivial and of zero degree, we conclude that $\mathfrak{h}_{\rm red} \cong \C$ is spanned by the generator of fibre-wise $\C^*$-action. By Proposition~\ref{p:CLM-full}, $\omega$ admits non-trivial parallel vector fields $X, JX$. It thus follows that the Ricci tensor has zero eigenvalue of multiplicity $2$, so that $\rho(\omega)\wedge \rho(\omega)=0$ at each point. Thus, \eqref{PDEA} becomes $\Delta \Scal(\omega)=0$, showing that the scalar curvature is a positive constant.  It follows that ${\rm Rc}(\omega)$ has constant eigenvalues $\left(0,0, \frac{\Scal(\omega)}{2}, \frac{\Scal(\omega)}{2}\right)$.  The K\"ahler metric is then locally product and hence locally symmetric, see \cite[Thm.~1.1]{ADM}. 

\smallskip
\noindent
{\bf Case 2(b).} $S=\PP(E_0)$ with $E_0$ a degree $0$ indecomposable vector bundle, which is the extension
\[ 0\to \mathcal{O}_{\Sigma} \to E_0 \to \mathcal{O}_\Sigma \to 0.\]
In this case, the reduced automorphism group $\Aut_r(S) =\Aut_0(E)/\C^* \cong \C$ is not reductive (see \cite[Thm.3]{Suwa} and \cite[Example~4.4]{Daly}), so we rule out the existence of a solution by Proposition~\ref{p:Calabi-Lichne-Mats}.

\smallskip
\noindent
{\bf Case 2(c).}  $S=\PP(E_1)$ with $E_1$ indecomposable of degree $1$ and stable (see \cite[Appendix A]{Tu}). By Narasimhan--Seshadri Theorem~\cite{NS}, $E_1$  is a projectively-flat Hermitian bundle,  defined by a representation  $\epsilon : \pi_1(\Sigma) \to PU(2)$, and therefore $S$ admits locally symmetric solutions  corresponding to BHE geometries on $N$ covered by $SU(3)\times \R^3$. 

We have reduced again the problem to show the uniqueness of the locally symmetric solutions in this case. Suppose $\omega$ is any solution of \eqref{PDEA} defined on $S$. As $E_1$ is simple, by the arguments in Case 2(a),  $\mathfrak{h}_{\rm red}(S)=H^0(\Sigma, \mathfrak{sl}(E_1))=0$ whereas $\mathfrak{h}(S)=\C$ (see \cite[Thm.~3]{Suwa}). Proposition~\ref{p:CLM-full} yields that $\omega$ admits non-trivial parallel vector fields $X, JX$ and we conclude as in Case 2(a) that $\omega$ is locally symmetric. \end{proof}

\section{Explicit solutions on  toric orbifold surfaces with $b_2=2$: Proof of Theorems~\ref{thm:main3} and \ref{thm:main4}} 
\subsection{The orthotoric ansatz} We consider the case when $M$ is a toric K\"ahler orbifold complex surface whose Delzant polytope is a generic quadrilateral, i.e. has no parallel faces.  By \cite{legendre}, any such quadrilateral can be realized as the momentum map image of a toric K\"ahler metric given by the orthotoric ansatz~\cite{ACG}:
\begin{equation}\label{orthotoric}
\begin{split}
g&= (x-y)\left(\frac{dx^2}{A(x)}  + \frac{dy^2}{B(y)}\right) + \frac{1}{x-y}\Big(A(x)(dt_1 + y dt_2) + B(y)(dt_1 + x dt_2) \Big),\\
\omega&= dx \wedge (dt_1 + ydt_2) + dy \wedge (dt_1 + x dt_2),
\end{split}
\end{equation}
where $x\in (\ap_1, \ap_2)$ and $y\in (\bp_1, \bp_2)$ are two intervals with
\[ \bp_1 < \bp_2 < \ap_1 <\ap_2.\]
The variables $x>y$ are called  \emph{orthotoric coordinates},  and $t_1, t_2$ are angular coordinates on $\T^2$. The smooth functions $A(x)>0$ and $B(y)>0$ are  defined respectively on $(\ap_1, \ap_2)$ and $(\bp_1, \bp_2)$. Thus, \eqref{orthotoric} describes a $\T^2$-invariant K\"ahler metric on $\mathring{M}:= (\ap_1, \ap_2) \times (\bp_1, \bp_2) \times \T^2$, whose momentum coordinates (written with respect to the Killing fields $\frac{\partial}{\partial t_1}$  and $\frac{\partial}{\partial t_2}$) are 
\[\mu_1 = x+y, \qquad \mu_2= xy.\]
Notice that the image of $(\ap_1, \ap_2)\times (\bp_1, \bp_2)$ under $\mu=(\mu_1, \mu_2)$ is the convex compact 
quadrilateral $\Delta_{\ap, \bp} \subset \R^2$ defined by the inequalities 
\begin{equation}\label{ortho-labels}  
\begin{split}
& \ell_{\ap_1}(\mu):= -(\ap_1^2 - \mu_1 \ap_1 + \mu_2)>0 \qquad \ell_{\ap_2}(\mu):=+(\ap_2^2 - \mu_1 \ap_2 + \mu_2) >0\\
&\ell_{\bp_1}(\mu):=+ (\bp_1^2 - \mu_1 \bp_1 + \mu_2)>0 \,   \qquad \ell_{\bp_2}(\mu):=-(\bp_2^2 - \mu_1 \bp_2 + \mu_2)>0.\end{split}
\end{equation}
It is shown in \cite[Prop.9]{ACGTF-jdg} that \eqref{orthotoric} smoothly extends to an orbifold toric K\"ahler structure defined on the simply connected compact toric symplectic 4-dimensional orbifold $M$  constructed from the labelled Delzant polytope $(\Delta_{\ap, \bp}, {\bf L}=\{r_1\ell_{\ap_1}, r_2\ell_{\ap_2},p_1\ell_{\bp_1}, p_2\ell_{\bp_2}\})$ (see \cite{LT, ACGTF-jdg})~\footnote{If $r_i$ and $p_j$ are positive real numbers such that vectors  $r_i(-\ap_i, 1), p_j(-\bp_j, 1), \, \, i,j=1,2$ in $\R^2$ span  over $\Z$ a two dimensional lattice $\Lambda \subset \R^2$ then the orbifold singularities of $M$ are determined  by $({\bf L}, \Lambda)$.} if and only if $A(x)$ and $B(x)$ are  smooth functions on $[\ap_1, \ap_2]$ and $[\bp_1, \bp_2]$, respectively, and satisfy the boundary conditions
\begin{equation}\label{boundary-orho}
A(\ap_i)=0=B(\bp_j), \qquad A'(\ap_i) =(-1)^{i-1}\left(\frac{2}{r_i}\right),  \qquad B'(\bp_j)=(-1)^{j-1}\left(\frac{2}{p_j}\right),
\end{equation}
for some positive real numbers $r_i$ and $p_j$ (which encode the orbifold singularities of $M$). Equivalently,  and without assuming that $(\Delta_{\ap, \bp}, {\bf L})$ is rational, the toric K\"ahler metric~\ref{orthotoric} $(g, \omega)$ satisfies the Abreu--Guillemin boundary conditions with respect to the label ${\bf L}$ (see \cite{ACGTF-jdg,legendre} for more details).

A key feature of the ansatz \eqref{orthotoric} is that, similarly to \eqref{ansatz1},  we can write down a closed primitive $(1,1)$-form
\begin{equation}\label{alpha} \al_{\lambda} = \frac{\lambda}{(x-y)^2}\Big(dx \wedge (dt_1 + ydt_2) - dy\wedge(dt_1 + x dt_2) \Big),\end{equation}
thus allowing us to  express the quantity
\begin{equation*}\label{quantity}
\mathcal{S}(\omega)=\left(-\frac{1}{2}dd^c \Scal(\omega)\wedge \omega + \left(\al_{\lambda} \wedge \al_{\lambda} + \rho(\omega) \wedge \rho(\omega)\right)\right)\Big/\omega^{[2]}.
\end{equation*}
Using the explicit form of the K\"ahler structure we compute (see also \cite{ACG-crelle}):
\[ \omega^2 = (x-y)\, dx\wedge dy\wedge dt_1\wedge dt_2, \qquad \al_{\lambda}^2 = -\frac{\lambda^2}{(x-y)^3} dx\wedge dy \wedge dt_1 \wedge dt_2, \]
\[ 
\begin{split}
dd^c \left(f(x,y)\right) = &\frac{1}{2(x-y)} \Big[\left(A(x)f_x\right)_x +\left(B(y)f_y\right)_y \Big]\omega \\
&+\frac{(x-y)^3}{2}\left[\left(\frac{A(x)f_x}{(x-y)^2}\right)_x -\left(\frac{B(y)f_y}{(x-y)^2}\right)_y \right]\al_1  \\
& + f_{xy}\left(dx \wedge d^c y + dy \wedge d^c x\right),
\end{split}
\]
\begin{equation}\label{orthotoric-Ricci} 
\begin{split}
\rho(\omega) =&  -\frac{1}{4(x-y)}\Big[ A''(x) + B''(y) \Big]\omega \\ 
&-\frac{(x-y)^3}{4}\left[\left(\frac{A'(x)}{(x-y)^2}\right)_x- \left(\frac{B'(y)}{(x-y)^2}\right)_y \right]\al_1. \\
    \end{split} \end{equation}
We thus find
\[ (x-y)^4 \left(\frac{\al_\lambda \wedge \al_{\lambda}}{\omega \wedge \omega}\right)= -\lambda^2.\]
\[ \Scal(\omega)=4\left(\frac{\rho(\omega)\wedge \omega}{\omega \wedge \omega}\right)= -\left(\frac{A''(x) + B''(y)}{x-y}\right).\]
\[ \frac{dd^c f \wedge \omega}{\omega^2}= \frac{1}{2(x-y)}\Big( \big(A(x)f_x\big)_x + \big(B(y)f_y\big)_y\Big).\]
\[ \begin{split} 
-(x-y)^4\left(\frac{dd^c\Scal(\omega)\wedge \omega}{\omega^2}\right) =&  
\frac{(x-y)^2}{2}\left(\Big(A(x)A'''(x)\Big)' + \Big(B(y)B'''(y)\Big)'\right) \\
&- (x-y)\Big((A(x)A'''(x)-B(y)B'''(y)\Big) \\
& -\frac{(x-y)}{2}\left(\Big(A'(x)-B'(y)\Big)\Big(A''(x)+ B''(y)\Big)\right) \\
&+ \Big((A(x)+B(y)\Big)\Big(A''(x) + B''(y)\Big).
\end{split}\]

\[\begin{split} 2(x-y)^4\left(\frac{\rho^2(\omega)}{\omega^2}\right)  = &
 \frac{(x-y)^2}{2} \Big(A''(x)B''(y)\Big) + \frac{(x-y)}{2}\Big(A'(x)+B'(y)\Big)\Big(A''(x)-B''(y)\Big) \\ &-\frac{1}{2}\left(A'(x)+B'(y)\right)^2.
\end{split}\]
We then get
\begin{equation}\label{orthotoric-reduction}
\begin{split} 
&(x-y)^4\mathcal{S}(\omega)=(x-y)^4\left(\frac{-dd^c \Scal(\omega)\wedge \omega + 2\left(\al_{\lambda}^2 + \rho(\omega)^2\right)}{\omega^2}\right)  \\
=&\frac{(x-y)^2}{2}\left(\Big(A(x)A'''(x)\Big)' + \Big(B(y)B'''(y)\Big)' + \Big(A''(x)B''(y)\Big)\right) \\
&-(x-y)\Big(A(x)A'''(x) - B(y)B'''(y) +A'(x)B''(y)-B'(y)A''(x)\Big) \\
&+\Big(A(x) + B(y)\Big)\Big(A''(x)+B''(y)\Big) -\frac{1}{2}\Big(A'(x) + B'(y)\Big)^2 -2\lambda^2.
\end{split} 
\end{equation}
We now look for polynomial solutions $A(x)$ and $B(y)$ 
of the  equation $\mathcal{S}(\omega)=0$.
If the the degree of $A(x)$ is $k$, the LHS of \eqref{orthotoric-reduction} contains a maximal degree monomial
\[ 
\begin{split}
\frac{k(k-2)^2(2k-3)}{2} a_0^2 x^{2k-2}.
\end{split}\]
 It thus follows that a polynomial solution can only occur for ${\rm deg}(A)\leq 2, \, {\rm deg}(B) \leq 2$.  We next analyse this case in detail.
Letting
\[ A(x)=a_0x^2 + a_1x+a_2, \qquad B(y)=b_0y^2 +b_1y+ b_2\]
and substituting in \eqref{orthotoric-reduction}, we get
\begin{equation}\label{quadratic-solution} \begin{split} &(x-y)^4\mathcal{S}(\omega) = -2a_0b_0(x-y)^2 -2(x-y)(b_0a_1 -a_0b_1) \\
&+2(a_0x^2 +a_1x + a_2 + b_0y^2 +b_1y+b_2)(a_0+b_0) -\frac{1}{2}\left(2a_0x + a_1 + 2b_0y+b_1\right)^2-2\lambda^2\\
=&-2a_0b_0(x-y)^2-2(x-y)(b_0a_1-a_0b_1) +2a_0b_0(x^2+y^2) +2b_0a_1x + 2a_0b_1y \\
&+2(a_2+b_2)(a_0+b_0)-4a_0b_0xy-2a_0b_1x-2b_0a_1y-\frac{1}{2}(a_1 + b_1)^2-2\lambda^2 \\
&= 2(a_0+b_0)(a_2+b_2)-\frac{1}{2}(a_1+b_1)^2 -2\lambda^2.
\end{split} 
\end{equation}
The above shows that we can find explicit solutions of $\mathcal{S}(\omega)=0$ as soon as 
\begin{equation}\label{admissible} 
4(a_0+b_0)(a_2+b_2) \geq (a_1+b_1)^2.
\end{equation}
To address this last point, let us scale $A(x)$ by a real constant $t>0$. Then  the coefficients of  $(t A, B)$ will satisfy \eqref{admissible} for the values of $t$ such that
\begin{equation*}\label{label-criterion} 
\begin{split}
f(t)&:=4(t a_0+b_0)(t a_2+b_2) -(t a_1+b_1)^2\\
&=-(a_1^2 -4a_0a_2)t^2 +2(2a_0b_2+ 2b_0a_2- a_1b_1)t -(b_1^2-4b_0b_2) \geq 0.
\end{split}\end{equation*}
The case of interest for constructing compact orbifold examples is when $A(x)$ and $B(y)$ both have two distinct real roots, i.e. the discriminants  of $A$ and $B$ satisfy
\[ a_1^2 -4a_0a_2 >0 \qquad b_1^2-b_0b_2 >0.\]
We can then find positive $t$'s with $f(t)\geq 0$ iff the discriminant $D(f)$ of $f(t)$ is non-negative. A direct computation shows 
\[\begin{split}
\frac{D(f)}{4}=R(A,B)=&a_0^2b_2^2 + b_0^2a_2^2 - 2a_0a_2b_0b_2-a_0a_1b_1b_2 -a_1a_2b_0b_1 +a_1^2b_0b_2 +b_1^2a_0a_2 \geq 0,
\end{split}\]
where $R(A,B)$ is the resultant of $A$ and $B$. If $\ap_1<\ap_2$ and $\bp_1<\bp_2$ are the real roots of $A$ and $B$, respectively,  we know that 
$R(A,B)=A(\bp_1)A(\bp_2)B(\ap_1)B(\ap_2)$. This quantity is strictly positive provided that $\bp_1 <\bp_2 < \ap_1 <\ap_2$ and $A(x)>0$ on $(\ap_1, \ap_2)$ and $B(x)>0$ on $(\bp_1, \bp_2)$.
Noting that  the scalar curvature is 
\begin{equation}\label{ortho-scalar} 
\Scal(\omega)= -\left(\frac{a_0+b_0}{x-y}\right)>0,\end{equation}
the conclusion of this analysis can be summarized in the following
\begin{prop}\label{p:othotoric} Suppose $\Delta$ is a compact convex quadrilateral in $\R^2$, which does not have a pair of parallel facets. Then there exists a 2-parameter family of labels ${\bf L}_{t_1, t_2}$ of $\Delta$
such that $(\Delta, {\bf L}_{t_1, t_2})$ admits an orthotoric K\"ahler metric $\omega$ verifying the Abreu-Guillemin boundary conditions and satisfying \eqref{PDEA}. For two different values of $t_1/t_2$, the solution satisfies \eqref{PDEA} with $\al_{\varphi}=0$. 
\end{prop}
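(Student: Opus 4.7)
The plan is to exhibit explicit solutions by specializing the orthotoric ansatz \eqref{orthotoric} to the smallest admissible degrees for $A$ and $B$, namely $\deg A = \deg B = 2$. By the realization result of \cite{legendre}, any convex quadrilateral $\Delta$ without parallel facets is affinely equivalent to some $\Delta_{\ap, \bp}$ with $\bp_1 < \bp_2 < \ap_1 < \ap_2$. I take
\[ A(x) = s(x - \ap_1)(\ap_2 - x), \qquad B(y) = r(y - \bp_1)(\bp_2 - y), \qquad s, r > 0, \]
so that $A > 0$ on $(\ap_1, \ap_2)$, $B > 0$ on $(\bp_1, \bp_2)$, and the Abreu--Guillemin boundary conditions \eqref{boundary-orho} hold automatically. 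This produces a toric orbifold K\"ahler metric with label
\[ \mathbf{L}_{s, r} = \bigl\{ r_1 = r_2 = \tfrac{2}{s(\ap_2 - \ap_1)},\ p_1 = p_2 = \tfrac{2}{r(\bp_2 - \bp_1)} \bigr\}, \]
giving a two-parameter family indexed by $(t_1, t_2) = (s, r)$.

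Next, I reduce \eqref{PDEA} to a single scalar algebraic equation. Under the standing hypothesis $c_{\Ome, \A} = 0$, the equation \eqref{PDEA} amounts to $\mathcal{S}(\omega) = 0$. By \eqref{quadratic-solution}, for quadratic $A$ and $B$ the quantity $(x-y)^4 \mathcal{S}(\omega)$ collapses to the constant
\[ (x-y)^4 \mathcal{S}(\omega) = 2(a_0+b_0)(a_2+b_2) - \tfrac{1}{2}(a_1+b_1)^2 - 2\lambda^2, \]
and $\mathcal{S}(\omega) = 0$ is equivalent to
\[ \lambda^2 = \Lambda(s, r) := (a_0+b_0)(a_2+b_2) - \tfrac{1}{4}(a_1+b_1)^2. \]
Every pair $(s, r) \in (0, \infty)^2$ with $\Lambda(s, r) \geq 0$ will then produce, via \eqref{alpha} with $\lambda = \pm\sqrt{\Lambda(s, r)}$, a solution of \eqref{PDEA} on the corresponding toric orbifold.

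The remaining task is to verify that the set $\Sigma_+ := \{(s, r) \in (0,\infty)^2 : \Lambda(s, r) \geq 0\}$ is a genuinely two-dimensional cone, and to identify its boundary rays as the loci where $\al_\varphi = 0$. Substituting the coefficients of $A$, $B$ and using the elementary identities
\[ 2(\ap_1\ap_2 + \bp_1\bp_2) - (\ap_1+\ap_2)(\bp_1+\bp_2) = U + V, \qquad (\ap_2 - \ap_1)(\bp_2 - \bp_1) = U - V, \]
with $U := (\ap_1-\bp_1)(\ap_2-\bp_2)$ and $V := (\ap_1-\bp_2)(\ap_2-\bp_1)$, I rewrite
\[ 4\Lambda(s, r) = -(\ap_2-\ap_1)^2 s^2 + 2(U+V)\, s r - (\bp_2-\bp_1)^2 r^2. \]
The discriminant of this binary form equals $4UV > 0$, so $\Lambda$ factors over $\R$ into two distinct real linear forms whose zero-rays have slopes
\[ t_{\pm} = \left(\frac{\sqrt{U} \pm \sqrt{V}}{\ap_2 - \ap_1}\right)^2, \]
both strictly positive. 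Hence $\Sigma_+$ is the non-empty closed wedge $\{(s, r) : s, r > 0,\ t_- \leq s/r \leq t_+\}$. On its two bounding rays $s/r = t_\pm$ the quantity $\Lambda$ vanishes, so $\lambda = 0$ and $\al_\varphi = 0$; these are the two distinguished values of $t_1/t_2$ in the statement.

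The main delicate point is the last step: confirming that the two roots of $\Lambda(\cdot, 1)$ are real, distinct, and strictly positive, so that $\Sigma_+$ actually meets the open first quadrant in a two-dimensional wedge. This relies essentially on the strict interlacing $\bp_1 < \bp_2 < \ap_1 < \ap_2$ through the positivity of $U$ and $V$, together with the factorization $U - V = (\ap_2-\ap_1)(\bp_2-\bp_1)$, which lets the discriminant collapse to the perfect square $(2\sqrt{UV})^2$ and yields the clean closed-form expression for $t_\pm$ above.
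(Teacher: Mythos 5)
Your proposal is correct and follows essentially the same route as the paper: quadratic $A,B$ adapted to the Legendre realization of $\Delta$, reduction of $\mathcal{S}(\omega)=0$ to the sign of the constant $2(a_0+b_0)(a_2+b_2)-\tfrac{1}{2}(a_1+b_1)^2$, and non-negativity of this binary form on a genuine wedge via the interlacing $\bp_1<\bp_2<\ap_1<\ap_2$ — the paper phrases the discriminant computation as positivity of the resultant $R(A,B)=A(\bp_1)A(\bp_2)B(\ap_1)B(\ap_2)$, whereas you derive the equivalent closed forms $t_{\pm}=\left((\sqrt{U}\pm\sqrt{V})/(\ap_2-\ap_1)\right)^2$, which is only a cosmetic difference. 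The one point worth adding is the one-line check that $\Scal(\omega)=(s+r)/(x-y)>0$, which is part of \eqref{PDEA} and which the paper records in \eqref{ortho-scalar} immediately before the proposition.
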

\begin{proof} E. Legendre~\cite{legendre} has shown that given such a polytope $\Delta$, there exists a  choice of real numbers $\bp_1 <\bp_2 < \ap_1 <\ap_2$ such that the  polytope $\Delta_{\ap, \bp}$ given by \eqref{ortho-labels} is affine isomorphic to $\Delta$. We can thus assume that $\Delta=\Delta_{\ap, \bp}$. We then consider the quadrics 
\[ A_0(x) = -\frac{2}{(\ap_2-\ap_1)}(x-\ap_1)(x-\ap_2), \qquad B_0(y):= -\frac{2}{(\bp_2-\bp_1)}(y-\bp_1)(y-\bp_2),\]
which define an orthotoric metric on the interior of $\Delta_{\ap,\bp}$,   satisfying the Abreu--Guillemin boundary conditions with respect to the labels 
\[{\bf L}_{1,1} := \left\{ \ell_{\ap_1}(\mu), \ell_{\ap_2}(\mu), \ell_{\bp_1}(\mu), \ell_{\bp_2}(\mu) \right\}.\]
Letting
\[{\bf L}_{t_1,t_2} := \left\{ t_1 \ell_{\ap_1}(\mu), t_1\ell_{\ap_2}(\mu), t_2\ell_{\bp_1}(\mu), t_2\ell_{\bp_2}(\mu) \right\}, \, \, t_1, t_2>0,\]
we obtain a two-parameter family of labels of $\Delta_{\ap, \bp}$ with respect to which the orthotoric metric with quadratic functions $(t_1A_0(x), t_2B_0(y))$ satisfies the Abreu--Guillemin boundary conditions. As $R(A_0, B_0)>0$, we can choose $t=t_1/t_2$ in an interval so that $f(t)\geq 0$, i.e. the inequality  \eqref{admissible} holds for $t_1A(x), t_2B(y)$. We can then determine $\lambda$ from the equality 
\begin{equation}\label{lambda}
2(a_0+b_0)(a_2+b_2) -\frac{1}{2}(a_1+b_1)^2=2 \lambda^2.\end{equation}
By \eqref{quadratic-solution}, this yields a solution of \eqref{PDEA}  satisfying the Abreu--Guillemin boundary conditions on $(\Delta=\Delta_{\ap, \bp}, {\bf L}_{t_1, t_2})$. $\theta_{\lambda}=0$ corresponds to the case when $t_1/t_2$ equals to one of the two roots of $f(t)=0$. \end{proof}
\begin{rmk} Letting all the data $\ap_i, \bp_j, t_k$ be rational numbers, Proposition~\eqref{p:othotoric} gives rise to a familly of non-trivial solutions of \eqref{PDEA} on compact complex orbifold surfaces.
\end{rmk}
\subsection{Proof of Theorem~\ref{thm:main4}}
\begin{proof}[Proof of Theorem~\ref{thm:main4}] We start with a polytope $(\Delta_{\ap, \bp}, {\bf L}_{t_1,t_2})$ given by Proposition~\ref{p:othotoric}, where $\bp_1< \bp_2<\ap_1<\ap_2$ are integers, and $(t_1,t_2)$ are  co-prime integers such that $f(t_1/t_2)>0$. 
By construction, $(\Delta_{\ap, \bp}, {\bf L}_{t_1,t_2})$  is then a \emph{rational} Delzant polytope, meaning that  the  differentials of the affine linear functions  in ${\bf L}_{t_1,t_2}$ span a rank two lattice $\Lambda \subset \R^2$. It follows from \cite{LT} (see also the treatment in \cite{ACGTF-jdg}) that $(\Delta_{\ap, \bp}, {\bf L}, \Lambda)$ defines a simply connected compact toric orbifold $(M, \omega)$, whose  Delzant image is $\Delta_{\ap, \bp}$. By Proposition~\ref{p:othotoric} and \cite[Prop.9]{ACGTF-jdg}, $M$ admits a toric K\"ahler orbifold metric $(g, \omega, J)$ obtained by the orthotoric  ansatz \eqref{orthotoric} and degree $2$ polynomial functions $A(x)=t_1A_0(x), \, B(y)=t_2B_0(y)$, such that $(g,\omega, J)$ satisfies \eqref{PDEA} with $\al_{\lambda} \neq 0$.  By \cite[Rem.2.16]{ABLS}, changing the sign of $\al_{\lambda}$ (or equivalently the sign of $\lambda$  determined via \eqref{lambda}) leads to BHE's on the same riemannian manifold $(N, g_N)$ with complex structures related by changing sign on the orthogonal compliment of $\VV$, so we shall assume without loss that $\lambda>0$. By \cite{ACG,ACG-crelle}, 
\[\omega_-:=\al_{\lambda} \qquad g_-:=\frac{\lambda}{(x-y)^2}g\] define another toric K\"ahler metric on $M$, with complex structure denoted $J_-$ inducing the opposite orientation of the one induced by $(g, \omega)$. In the terminology of \cite{ACG-crelle}, $(g_-, \omega_-, J_-)$ is a (regular) ambitoric structure obtained by the \emph{Segre factorization structure} from the same pair of quadrics $A(x)$ and $B(y)$. As $A(x)$ and $B(x)$ are polynomials of degree $\leq 3$, it follows by \cite[Cor.2 \& Prop.6]{ACGL} that  $(M, g_-, \omega_-, J_-)$ can be identified with the Levi--K\"ahler quotient  of the co-dimension $2$ CR-manifold \[ N= (S^3\times S^3, {{\mathcal D}}, J_0) \subset \C^2 \times \C^2.\] More concretely, $(N, {{\DD}}, J_0)$ is invariant under the linear  action on $\C^4$ of the $4$-dimensional torus $\T^4$, and there exists a $2$-dimensional sub-torus $\T^2\subset \T^4$  with Lie algebra $\mathcal V \subset {\rm Lie}(\T^4)$ (determined from the toric geometry of $(M, g_-, \omega_-, J_-)$),  and an element $\lambda^* \in \mathcal{V}^*\setminus \{0\}$ with the  following properties: 
\begin{itemize}
    \item The sub-algebra $\mathcal V \subset {\rm Lie}(\T^4)$ gives rise to a point-wise splitting
\[ T_pN = \mathcal{V}_p \oplus {{\mathcal D}}_p,\]
where  the subspace $\mathcal{V}_p\subset T_pN$ is generated by the fundamental vector fields (evaluated at $p$) of elements of $\mathcal V$. In the analogy with the much studied Sasaki geometry, the above decomposition allows one to  define a connection $1$-form $\eta \in \Omega^1(N, \mathcal V)$, acting tautologically on $\mathcal{V}_p$. 
\item The choice of $\lambda^* \in {\mathcal V}^* \setminus \{0\}$ is such that  the real-valued $1$-form $\eta_{\lambda}:= \langle \eta, \lambda^*\rangle$ on $N$ gives rise to a $\mathcal{V}$-basic $2$-form $d\eta_{\lambda}$ and $({\DD}, J_0, d\eta_{\lambda})$ gives rise to a transversal K\"ahler structure on $(N,\mathcal{V})$,  which induces the K\"ahler structure $(g_-, \omega_-, J_-)$ on the orbifold  $M=N/\T^2$. 
\end{itemize}
The second Betti number of the orbifold $M$ equals $2$: this follows for instance from the Hodge decomposition, using that $M$ is a toric K\"ahler orbifold surface with respect of either orientation, and thus $H^{2,0}(M, J)=0=H^{2,0}(M, J_-)$ and ${\rm dim}(H^{1,1}(M, J))={\rm dim}(H^{1,1}(M, J_-))=2$. Equivalently, the second basic deRham cohomology of $(N, \mathcal{V})$ is $2$-dimensional.  We can then find another non-zero element $c^*\in \mathcal{V}^*\setminus\{ 0\}$ such that the $1$-form $\eta_c:= \langle \eta, c^* \rangle$ satisfies
\[ [d\eta_c]_B = -[\rho(\omega)]_B, \]
where $\rho(\omega)$ is the transversal Ricci form of the initial orthotoric structure $(g, \omega, J)$ on $M$,  seen as a basic $2$-form on $(N, \mathcal{V})$ by the pulling it back via the quotient map. The vectors $(\lambda^*, c^*)$  form a basis of $\VV^*$ (this follows as $\rho(\omega)^2$ and $\al_{\lambda}^2$ have opposite signs on $M$) whose dual basis of $\VV$ will be denoted $\{V, W\}$.  Identifying the $\VV$-basic $2$-form $d\eta_c$ with a $2$-form on $M$, a key point is that both $d\eta_c$ and $\rho(\omega)$ are of type $(1,1)$ with respect $J_-$, see \cite{ACGL} and \eqref{orthotoric-Ricci}.  By the  $dd^c_{J_-}$-lemma, we can find a smooth function $\varphi$ on $M$ such that 
\[ -\rho(\omega) = d\eta_c + dd^c_{J_-} \varphi.\]
Letting 
\[ \tilde \eta_c:= \eta + d^c_{J_-} \varphi, \qquad   \mathcal{H}_p := {\rm ann}(\eta_{\lambda}(p), \tilde \eta_{c}(p)),\]
we have a $\VV$-invariant splitting
\[ T_pN = \mathcal{V}_p \oplus \mathcal{H}_p.\]
The  pullback of the initial orthotoric structure $(g, J, \omega)$ on $M$ to  $N$ defines a $\VV$-invariant (transversal) K\"ahler structure $(g_K^T, J^T, \omega_K^T)$ on $\mathcal{H}$.  We can further extend  $J^T$ to an almost-complex structure $J_N$ on $N$  by 
$J_N(V):=W, \, J_N (W)=-V$,  and define a compatible Hermitian metric on $N$ by $g_N :=\frac{\Scal(g_K^T)}{2}g_K + \eta_{\lambda}^2 + \tilde \eta_c^2$.
By  \cite[Thm.1.1]{ABLS}, we obtain a 
quasi-regular Bismut Ricci flat pluriclosed Hermitian metric on $N=S^3\times S^3$. By \cite[Prop.2.21]{ABLS} and using that $b_2(M)=2$, we have that the $(1,1)$ Bott-Chern cohomology group of $(N,J_N)$ is $2$-dimensional.
Notice that the transversal scalar curvature of $g_K^T$ is given by \eqref{ortho-scalar}, and therefore is not constant. It follows from Theorem~\ref{thm:reduction} that $(g_N, J_N)$ is not isometric to the Samelson geometry $SU(2)\times SU(2)$.  \end{proof}
\begin{rmk}  Proposition~\ref{p:othotoric} and the construction in \cite{ACGL} lead more generally to a continuous family of (not quasi-regular in general) BHE examples on $S^3\times S^3$, parametrized by generic quadrilaterals and two parameter family of labels associated to each quadrilateral. As on overall scale of the labels lead to the same structure on $S^3\times S^3$, this gives rise to BHE structures on $S^3\times S^3$ parametrized by $3$ effective real parameters. The construction detailed in the proof of Theorem~\ref{thm:main4} above corresponds to a countable infinite subfamily of quasi-regular ones. When the generic quadrilaterals converge to a parallelogram, the corresponding BHE geometries on $S^3\times S^3$ converge to the homogeneous Samelson geometry $SU(2)\times SU(2)$. We postpone a detailed study of irregular BHE $3$-folds and their deformations to a future work. \end{rmk}

\subsection{Proof of Theorem~\ref{thm:main3}}
\begin{proof}[Proof of Theorem~\ref{thm:main3}]
In \cite[Sec.4.2]{CGMS},  the authors found  orthotoric  metrics with quadratic functions $A(x)$ and $B(y)$ as above, satisfying the relations
\[ a_1+b_1=0 \qquad a_2+b_2 =0.\]
Notice that such solutions satisfy \eqref{PDEA} with $\al_{\varphi}=0$, and thus are solutions of the ``box equation'' arising in the \emph{GK geometry}~\cite{GK},  and further studied in \cite{CGMS}. A class of solutions can be parametrized by $3$ positive real numbers ${\rm a,b, c}$ as follows:
\[ A_{\rm a,b,c}(x)=-(x-1)\left(x-\frac{\rm c}{\rm d}\right), \qquad B_{\rm a,b,c}(y) =-\left(\frac{\rm ab}{\rm cd}\right)\left(y+ \frac{\rm c}{\rm a}\right)\left(y-\frac{\rm c}{\rm b}\right), \]
where $ {\rm d} :=-{\rm a}  + {\rm b} - {\rm c}$.
For suitable (infinite) choices of positive integers ${\rm a, b, c}$, it is shown in \cite{CGMS} that \eqref{orthotoric} compactifies to an orbifold orthotoric complex surface $S_{\rm a,b,c}$ obtained as the orbit space of a semi-free $S^1$-action on a \emph{smooth} $5$-dimensional CR-manifold $(Y, \DD, J)= \mathcal{L}^{\rm a,b,c}$; furthermore, if $\xi$ denotes the generator of this $S^1$-action, $\xi$ is a CR vector field transversal to $\DD$ at each point, and the connection $1$-form $\eta_{\xi}$ it defines has curvature $d\eta_{\xi} = c \pi^*(\rho(\omega))$ for some non-zero constant $c$. By arguments similar to the proof of Theorem~\ref{thm:main4}, one can show that the manifold $\mathcal{L}^{\rm a,b,c}$ is obtained as a quotient of $S^3\times S^3$ by a suitable circle subgroup of $\T^4$: using Gysin's sequence and Smale's classification of compact imply connected spin $5$-manifolds, this implies that ${\mathcal L}^{\rm a,b,c}$ are all diffeomorphic to $S^2 \times S^3$, a fact  established in \cite{CGMS}.

Let $N:=S^1 \times Y$, $V$ be the generator of the $S^1$-factor,  $J_NV:= -c\xi$  and $\VV := {\rm span}(V, J_NV) \subset TN$. Pulling the orthotoric metric to $N$ via the splitting $TN = \VV \oplus \DD$, 
we define a BHE $6$-manifold via the inverse construction of Theorem~\ref{thm:reduction} (see \cite[Thm.~1.1]{ABLS} for details).  To show that that $H^{1,1}_{\rm BC}(N, J_N)$ is $2$-dimensional, we use the exact sequence \cite[(2.17)]{ABLS}. In our case,  $H^{1,0}_{\rm BC}(N, \VV)=H^{1,0}(S_{\rm a,b,c}, \C)=0$ by the toric property of the orbifold quotient and $H^{1,1}_{\rm BC}(N, \VV,\R)= H^{1,1}(S_{\rm a,b,c}, \R)=2$ by the ambitoric property of the orbifold quotient. Note that the scalar curvature of the transversal orthotoric K\"ahler structure is not constant (see \eqref{ortho-scalar}). 
\end{proof}

\appendix
\section{Stability condition associated to \eqref{PDEAn}} 
In the case $\A=0$, Dervan~\cite{Dervan} computed the slope of the Mabuchi functional $\mathcal{E}$  along smooth rays of K\"ahler metrics defined by a K\"ahler test configuration of $(M, \Ome)$ with smooth central fibre, and showed that it is a topological invariant of the test configuration, independent of the choice of the ray. Here we extend this result to $\A\neq 0$ and possibly singular  central fibre.

\smallskip
\begin{defn} A \emph{K\"ahler test configuration} $({\tstM}, {\tstA})$ associated to a K\"ahler manifold $(M, \Ome)$ is a  \emph{normal} compact K\"ahler space ${\tstM}$ endowed with 
\begin{itemize}
\item a flat morphism $\pi : \mathcal{\tstM} \to \mathbb{P}^1$;
\item a $\C^*$-action $\lambda$  on $\tstM$ covering the standard $\C^*$-action on $\mathbb{P}^1$;
\item 
 a $\C^*$-equivariant biholomorphism  $\Pi_0 : (\mathcal{\tstM} \setminus \pi^{-1}(0)) \cong (M\times (\PP^1\setminus \{0\}))$ where  the $\C^*$-action $\lambda_0$ on the RHS is the trivial action on  $M$ and the standard action on $\PP^1\setminus\{0\}=\C$;
\item a K\"ahler class $\mathcal{A}\in H^{1,1}({\tstM}, \R)$ such that $(\Pi_0^{-1})^*(\mathcal{\tstA})_{|_{M \times \{\tau\}}} = \Ome$.
\end{itemize}
We say that $(\mathcal{M}, \mathcal{A})$ is \emph{dominating} if $\Pi_0$  extends to
a $\C^*$-equivariant  morphism 
\begin{equation}\label{dominate}
 \Pi : \mathcal{\tstM} \to M \times \PP^1.
 \end{equation}
\end{defn}
\noindent
We now introduce K\"ahler test configurations adapted to our problem \eqref{PDEAn}.

\begin{defn}\label{Def:SKTC} A K\"ahler test configuration $(\mathcal{\tstM}, \mathcal{\tstA}, \mathcal{\tstB})$ associated to $(M, \Ome,\beta)$ is a K\"ahler test configuration $(\mathcal{M}, \mathcal{\tstA})$  for $(M, \Ome)$ endowed with a deRham class $\mathcal{\tstB}\in H^{1,1}(M,\R)$ such that $(\Pi_0^{-1})^*(\mathcal{\tstB})_{|_{M \times \{\tau\}}} = \beta$.
\end{defn}
\begin{rmk} Any dominating K\"ahler test configuration $(\tstM, \tstA)$ of $(M, \Ome)$ can be associated to $(M, \Ome, \A)$ by letting  $\tstB:=\Pi^*(\widehat\A)$ where $\Pi$ is the dominating map \eqref{dominate} and $\widehat\A$ is the pullback of $\A$ to   $M\times \PP^1$ by the projection $p_M: M \times \PP^1 \to M$.
\end{rmk}
Let $(\mathcal{\tstM}, \mathcal{\tstA},\mathcal{\tstB})$ be a dominating K\"ahler test configuration of 
$(M, \Ome, \A)$. We assume, for simplicity, that $\tstM$ is smooth or  an orbifold.  Let $\Omega \in 2\pi \mathcal{A}$ be an ${S}^1$-invariant K\"ahler metric on $\mathcal{\tstM}$ with $\omega_0:=\Omega_{|_{M\times\{1\}}}$ the corresponding K\"ahler metric on $M$. We denote $\omega_t \in \Ome$ the smooth ray of K\"ahler metrics on $M$ defined by
\begin{equation}\label{eq:ray-met}
\omega_t := \lambda(e^{-t + i s})^*(\Omega)_{|_{M\times\{1\}}}.    
\end{equation} 
We shall establish the following slope formula for the Mabuchi functional introduced in Definition~\ref{d:Mabuchi}:
\begin{thm}\label{E-Slope}
Let $({\tstM}, {\tstA},{\tstB})$ be an orbifold dominating K\"ahler test configuration (see Definition \ref{Def:SKTC}) associated to $(M,\Ome,\beta)$, where  $\Ome$ is a K\"ahler class on $M$ and  $\beta\in H^{1,1}(M,\R)$ satisfy the cohomological conditions \eqref{topo-n}. If the central fibre ${M}_0$ is reduced, then
\begin{equation*}
\mathcal{E}_{\rm NA}(\tstM, \tstA, \tstB):=\underset{t\rightarrow +\infty}{\lim}\frac{\mathcal{E}(\omega_t)}{t}={\color{red} -}(2\pi)^{n+1} \left[\left(c_1(\tstK_{{\tstM}/{\mathbb{P}^1}})^2+{\tstB}^2\right)\cdot{\tstA}^{[n-1]}- \left(\frac{c_{\Ome, \A}}{2}\right)\tstA^{[n+1]}\right]_{{\tstM}},
\end{equation*}
where $c_{\Ome,\A}= 2\left(\frac{(c_1^2(K_M) + \A^2)\cdot \Ome^{[n-2]}}{\Ome^{[n]}}\right)_M$.
\end{thm}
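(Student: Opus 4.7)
The plan is to additively decompose the Mabuchi functional $\mathcal{E}$ into three primitives whose slopes are computable separately via Deligne pairings / fibre integrations on $\tstM$, and then to assemble. Concretely, split the closed $1$-form $\boldsymbol{\Theta}$ of Lemma~\ref{l:mabuchi} as
\[
\boldsymbol{\Theta}\;=\;\boldsymbol{\Theta}_{\rm Ric}+\boldsymbol{\Theta}_{\al}-\tfrac{c_{\Ome,\A}}{2}\boldsymbol{\Theta}_{\rm vol},
\]
gathering respectively the contributions of $-\tfrac12 dd^c\Scal_{\varphi}\wedge\omega_{\varphi}^{[n-1]}+\rho_{\varphi}^2\wedge\omega_{\varphi}^{[n-2]}$, of $\al_{\varphi}^2\wedge\omega_{\varphi}^{[n-2]}$, and of $\omega_{\varphi}^{[n]}$ to the integrand of $D\mathcal{E}$. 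Each summand is closed on $\mathcal{H}_{\omega_0}$ by the same variational calculation as in the proof of Lemma~\ref{l:mabuchi}, applied term-by-term to the identity \eqref{variational-formula}; integrating yields continuous primitives $\mathcal{E}_{\rm Ric}$, $\mathcal{E}_{\al}$, $\mathcal{I}$ on $\mathcal{H}_{\omega_0}$ with $\mathcal{E}=\mathcal{E}_{\rm Ric}+\mathcal{E}_\al-\tfrac{c_{\Ome,\A}}{2}\mathcal{I}$.

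The slope of the Aubin--Mabuchi energy $\mathcal{I}$ is the classical Deligne intersection $-(2\pi)^{n+1}\tstA^{[n+1]}$, requiring no reducedness hypothesis. The slope of $\mathcal{E}_{\rm Ric}$ is the content of \cite{Dervan} in the smooth-central-fibre and $\A=0$ case: the time integral of the $\rho^2$ and $dd^c\Scal$ contributions is recast, via integration by parts and the $\C^*$-equivariant fibration structure of $(\tstM,\tstA)$, as the Deligne self-pairing of the relative Ricci form of an $S^1$-invariant $\Omega\in 2\pi\tstA$, yielding $-(2\pi)^{n+1}\,c_1(\tstK_{\tstM/\PP^1})^2\cdot\tstA^{[n-1]}$. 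To upgrade to the orbifold, reduced-central-fibre setting, pull back to a $\C^*$-equivariant orbifold resolution $\pi\colon\tstM'\to\tstM$, apply Dervan's formula on $\tstM'$, and identify the resulting self-intersection on $\tstM$ via the projection formula; the reducedness of $\tstM_0$ ensures that no log-discrepancy correction spoils the intersection-number interpretation.

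For the $\al$-slope, fix an $S^1$-invariant closed $(1,1)$-form $\tilde\al\in 2\pi\tstB$ on $\tstM$ (by averaging any smooth representative), and on each smooth fibre $\tstM_{\tau}\cong M$ write $\al_{\omega_t}=\tilde\al|_{\tstM_{\tau}}+d\beta_t$, with the potential $\beta_t$ uniquely determined by co-closedness. Using the primitivity identity $\al_{\varphi}^2\wedge\omega_{\varphi}^{[n-2]}=-\|\al_{\varphi}\|_{\varphi}^2\,\omega_{\varphi}^{[n]}$ together with the constancy of the topological integral $\int_M \al_t^2\wedge\omega_t^{[n-2]}=\A^2\cdot\Ome^{n-2}$, integration by parts in $t$ converts $\mathcal{E}_\al(\omega_T)$ into the fibre integral of $\tilde\al^2\wedge\Omega^{[n-1]}$ over the tube $\pi^{-1}(\{e^{-T}\leq|\tau|\leq 1\})$ plus a boundary term. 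The tube integral evaluates, via $S^1$-invariant polar coordinates $\tau=e^{-t+is}$, to $T\cdot(2\pi)\int_{\tstM}\tilde\al^2\wedge\Omega^{[n-1]}+O(1)$, yielding the slope $-(2\pi)^{n+1}\tstB^2\cdot\tstA^{[n-1]}$. Adding the three slopes gives the formula of the theorem.

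The main technical obstacle lies in this last step: the boundary term must be bounded uniformly in $T$, which requires uniform elliptic estimates for $\beta_t$ on the degenerating family $\{\tstM_{\tau}\}_{\tau\to 0}$. The reducedness hypothesis enters decisively here and in the Ricci slope, ensuring that $\tilde\al$ (respectively the relative canonical class) admits a smooth $S^1$-invariant representative across $\tstM_0$ with $L^\infty$-bounded fibrewise Hodge potentials; without reducedness, logarithmic singularities along components of $\tstM_0$ counted with multiplicity would contribute corrections incompatible with the pure intersection numbers on the right-hand side.
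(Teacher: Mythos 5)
Your overall architecture matches the paper's: both decompose $\mathcal{E}$ into a Ricci part, an $\al$-part and the Aubin--Mabuchi energy, compute each slope as a fibre integral over the total space of the test configuration, and reduce the theorem to bounding the error between the functional evaluated along the ray and the corresponding fibre integral. The $\mathcal{I}$-slope and the identification of the two leading intersection numbers are fine. However, there is a genuine gap in your treatment of the $\al$-part, which is precisely where the new difficulty of this theorem lies. You reduce everything to ``uniform elliptic estimates for $\beta_t$ on the degenerating family'' and assert that reducedness of $M_0$ supplies $L^\infty$-bounded fibrewise Hodge potentials. This is not correct: as $\tau\to 0$ the fibre metrics $\Omega_\tau$ degenerate, the relevant elliptic constants (lowest nonzero eigenvalue of the Laplacian, Sobolev constants) are not uniform, and reducedness of the central fibre has nothing to do with controlling the fibrewise Green operator. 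The paper resolves exactly this point by writing an explicit primitive $\mathcal{P}^{\A}(\varphi)$ for the $\al$-contribution in terms of the Green operator $\mathbb{G}_\varphi$, showing that the error term is $S(\tau)=\int_{M\times\{\tau\}}\mathbb{G}_{\Omega_\tau}(f_\Omega)\,f_\Omega\,\Omega_\tau^{[n]}\geq 0$, and then invoking the nontrivial \emph{uniform upper bound for Green kernels in K\"ahler families} of Guedj--T\^o \cite{GT} to bound it. Without some substitute for that input, your boundary term is not controlled and the $\al$-slope computation does not close.

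A secondary issue is that you misattribute the role of the reducedness hypothesis. In the paper it is used only in the Ricci part, to bound the entropy-type quantity $\int_{M\times\{\tau\}}\log\bigl(\Omega^{n+1}/(\Omega^n\wedge\widehat\omega_{\rm FS})\bigr)\,\Omega^{[n]}$ as $\tau\to 0$ (via \cite[Rem.~4.12]{Dervan-Ross}); it plays no role in the harmonic-representative correction. Your proposed route for the Ricci part via a $\C^*$-equivariant resolution and the projection formula is plausible in outline but is not what the paper does (the paper extends Dervan's current identity $-dd^c\mathcal{E}^0_\Psi=\pi_*\bigl((\rho_\Omega-\pi^*\omega_{\rm FS})^2\wedge\Omega^{[n-1]}\bigr)$ directly on the orbifold total space and applies Green--Riesz); if you pursue the resolution route you would still need to track how the entropy term behaves under pullback, which is where reducedness would reappear.
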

The  proof will be given in  the reminder of the section.

\smallskip
We first decompose the Mabuchi functional   $\mathcal{E} : \mathcal{H}_{\omega_0} \to \R$ 
\[ \mathcal{E}=\mathcal{E}^0+\mathcal{P}^{\A} + \left(\frac{c_{\Ome,\A}}{2}\right)\mathcal{I},
\]
where
\begin{equation}
\begin{split}\label{eq:E0H}(d\mathcal{E}^0)_{\varphi}(\dot{\varphi}):=&-\int_M\dot{\varphi}\left(-\frac{1}{2}\left(dd^c\Scal_\varphi\right)\wedge\omega_{\varphi}^{[n-1]}+\rho_{\varphi}^2\wedge\omega_{\varphi}^{[n-2]}\right), \qquad \mathcal{E}^0(0)=0,\\
(d{\mathcal P}^{\A})_\varphi(\dot{\varphi}):=&-\int_M\dot{\varphi}\left(\theta_\varphi^2\wedge\omega_\varphi^{[n-2]}\right), \qquad \qquad \qquad \qquad  \qquad  \qquad   \mathcal{P}^{\A}(0)=0,\\
(d\mathcal{I})_{\varphi}(\dot \varphi) :=& \int_M \dot \varphi \, \omega_{\varphi}^{[n]}, \qquad \qquad \qquad \qquad \qquad \qquad \qquad \qquad \, \, \,  \, \,  \, \mathcal{I}(0)=0.
\end{split}    
\end{equation}
\begin{lemma} $\mathcal{E}^0$ and $\mathcal{P}^{\A}$ are equivalently given by
\begin{align}\label{eq:CTE0}
\mathcal{E}^0(\varphi)=&-\frac{1}{(n-1)!}\sum_{j=0}^{n-2}\int_M \varphi\omega_\varphi^{j}\wedge\rho_0^{2}\wedge\omega_0^{n-j-2}+\frac{1}{2}\int_M\log\left(\frac{\omega_\varphi^n}{\omega_0^n}\right)\omega_\varphi^{[n-1]}\wedge\rho_0 \\ \nonumber
&+\frac{1}{2}\int_M\log\left(\frac{\omega_\varphi^n}{\omega_0^n}\right)\omega^{[n-1]}_\varphi\wedge\rho_\varphi, \\\label{eq:CTH}
\mathcal{P}^{\A}(\varphi)=&-\frac{1}{(n-1)!}\sum_{j=0}^{n-2}\int_M \varphi\omega_\varphi^{j}\wedge\theta_0^{2}\wedge\omega_0^{n-j-2}+\int_M\mathbb{G}_0\left(\Lambda_0\theta_\varphi\right)\theta_0\wedge\omega_\varphi^{[n-1]}.
\end{align}
\end{lemma}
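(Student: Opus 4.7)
The plan is to verify, for each of \eqref{eq:CTE0} and \eqref{eq:CTH}, two things: (i) the proposed primitive vanishes at $\varphi=0$, and (ii) its first variation along an arbitrary smooth path $\varphi(t)$ recovers the 1-form in the corresponding line of \eqref{eq:E0H}. Since $\mathcal{H}_{\omega_0}$ is contractible and the defining 1-forms are closed (by the same Fr\"obenius--Cartan argument as in Lemma~\ref{l:mabuchi}), a primitive is determined up to an additive constant, which is fixed by the normalization at $\varphi=0$. For (i), the $\varphi$-weighted sums vanish trivially, $\log(\omega_0^n/\omega_0^n)=0$, and $\Lambda_0\al_0=0$ since the cohomological constraint $\beta\cdot\Ome^{n-1}=0$ in \eqref{topo-n} forces the $\omega_0$-primitive part of a harmonic $(1,1)$-form in $2\pi\beta$; hence $\mathbb{G}_0(\Lambda_0\al_0)=0$ and the entire second summand in the $\mathcal{P}^{\A}$ formula vanishes at $\varphi=0$.

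For (ii) in the $\mathcal{E}^0$ case, I would differentiate along $\varphi(t)$ using the standard Kähler identities
\[
\partial_t\omega_\varphi^j = j\,dd^c\dot\varphi\wedge\omega_\varphi^{j-1},\qquad \partial_t\log(\omega_\varphi^n/\omega_0^n)=\Lambda_\varphi(dd^c\dot\varphi),\qquad \dot\rho_\varphi=-\tfrac12 dd^c\bigl(\Lambda_\varphi(dd^c\dot\varphi)\bigr),
\]
and the cohomological substitution $\rho_0-\rho_\varphi=\tfrac12 dd^c\log(\omega_\varphi^n/\omega_0^n)$. After integrating by parts in $dd^c$, the sum in the first line of \eqref{eq:CTE0} telescopes against $\omega_\varphi-\omega_0=dd^c\varphi$, producing mixed $\rho_0$/$\rho_\varphi$ terms. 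Combining with the differentials of the two log terms and repeated $dd^c$-integration by parts assembles everything into
$-\int_M\dot\varphi\bigl(-\tfrac12 dd^c\Scal_\varphi\wedge\omega_\varphi^{[n-1]} + \rho_\varphi^2\wedge\omega_\varphi^{[n-2]}\bigr)$,
matching the first line of \eqref{eq:E0H}. This is the direct generalization of the classical Chen--Tian formula in the cscK case, and the same scheme — telescoping, $dd^c$-integration by parts, cohomological substitution — is the engine here.

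For (ii) in the $\mathcal{P}^{\A}$ case, the new feature is that the harmonic representative $\al_\varphi$ depends non-locally on $\varphi$ via a Green-type operator (cf.\ \eqref{variation-alpha}). The key idea to handle this is to exploit the $dd^c$-lemma together with \eqref{topo-n} (ensuring $\al_\varphi$ stays of pure type $(1,1)$) to write $\al_\varphi-\al_0 = dd^c u_\varphi$ for a smooth function $u_\varphi$, uniquely normalized by $\int_M u_\varphi\,\omega_0^{[n]}=0$. Then $\Lambda_0\al_\varphi = \Lambda_0(dd^c u_\varphi)$ is local in $\varphi$ and $\mathbb{G}_0(\Lambda_0\al_\varphi)$ reduces to a constant multiple of $u_\varphi$, so the Green-operator term in \eqref{eq:CTH} becomes, up to scale, $\int_M u_\varphi\,\al_0\wedge\omega_\varphi^{[n-1]}$. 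Differentiating this with $\dot u_\varphi$ determined from $\dot\varphi$ by differentiating the primitivity condition $\Lambda_\varphi\al_\varphi=0$, and combining with the differential of the first sum via $dd^c$-integration by parts, yields the required $-\int_M\dot\varphi\,\al_\varphi^2\wedge\omega_\varphi^{[n-2]}$.

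The main obstacle is the clean bookkeeping of the non-local dependence of $\al_\varphi$ on $\varphi$; the reduction to $u_\varphi$ via the $dd^c$-lemma is precisely what converts the problem into the same local $dd^c$-integration-by-parts framework that works for $\mathcal{E}^0$. Matching additive constants throughout (from the normalizations of $u_\varphi$ and the Green operator) finally produces \eqref{eq:CTE0} and \eqref{eq:CTH} on the nose.
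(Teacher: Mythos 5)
Your proposal is correct and follows essentially the same route as the paper: the paper simply cites Dervan's Proposition 3.9 for \eqref{eq:CTE0}, and for \eqref{eq:CTH} it likewise differentiates the right-hand side, using the $dd^c$-lemma identity $\theta_\varphi=\theta_0-dd^c\,\mathbb{G}_0\left(\Lambda_0\theta_\varphi\right)$ --- so your potential $u_\varphi$ is $-\mathbb{G}_0\left(\Lambda_0\theta_\varphi\right)$ up to an additive constant, which is harmless because $\A\cdot\Ome^{n-1}=0$ kills it against $\theta_0\wedge\omega_\varphi^{[n-1]}$. The one point to nail down when you carry out the $\mathcal{P}^{\A}$ computation is that your ``constant multiple'' is $-1$, i.e.\ the Green term equals $-\int_M u_\varphi\,\theta_0\wedge\omega_\varphi^{[n-1]}$; with that sign its derivative is $-\int_M\dot\varphi\,\left(\theta_\varphi^2-\theta_0^2\right)\wedge\omega_\varphi^{[n-2]}$, which combined with the $-\int_M\dot\varphi\,\theta_0^2\wedge\omega_\varphi^{[n-2]}$ coming from the telescoping sum gives exactly $-\int_M\dot\varphi\,\theta_\varphi^2\wedge\omega_\varphi^{[n-2]}$, as required.
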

\begin{proof}
The identity \eqref{eq:CTE0} follows from \cite[Proposition 3.9]{Dervan}.  To obtain the expression for the ``potential'' energy $\mathcal{P}^{\A}$, we compute below the derivative of the RHS of \eqref{eq:CTH} and compare it with \eqref{eq:E0H}. Similarly to \eqref{variation-alpha}, we have
\[ \dot{\theta}_\varphi=-dd^c\mathbb{G}_{\varphi} \langle\theta_\varphi, dd^c\dot{\varphi}\rangle_{\varphi},\]
so we compute,  using integration by parts, 
\[
\begin{split}
-(d\mathcal{P}^{\A})_\varphi(\dot{\varphi})
=&\frac{1}{(n-1)!}\sum_{j=0}^{n-2}\int_M \dot\varphi\omega_\varphi^{j}\wedge\theta_0^{2}\wedge\omega_0^{n-j-2}+\int_M j\varphi dd^c\dot\varphi\wedge\omega_\varphi^{j-1}\wedge\theta_0^{2}\wedge\omega_0^{n-j-2}\\\
&+\int_M\mathbb{G}_0\underbrace{\left(\Lambda_0 dd^c\mathbb{G}_{\varphi} \langle\theta_\varphi, dd^c\dot{\varphi}\rangle_{\varphi}\right)}_{=-\Delta_0( \mathbb{G}_{\varphi}\langle \al_{\varphi}, dd^c\varphi\rangle_{\varphi})}\omega_\varphi^{[n-1]}\wedge\theta_0 -\int_M\mathbb{G}_0\left(\Lambda_0\theta_\varphi\right)\omega_\varphi^{[n-2]}\wedge dd^c\dot\varphi\wedge\theta_0\\
=&\frac{1}{(n-1)!}\sum_{j=0}^{n-2}\int_M\dot\varphi\omega_\varphi^{j}\wedge\theta_0^{2}\wedge\omega_0^{n-j-2}  +\int_M j\dot\varphi(\omega_\varphi-\omega_0)\wedge\omega_\varphi^{j-1}\wedge\theta_0^{2}\wedge\omega_0^{n-j-2}\\
&-\int_M \left(\mathbb{G}_{\varphi} \langle\theta_\varphi, dd^c\dot{\varphi}\rangle_{\varphi}-\frac{1}{\Vol(\Ome)}\int_M\mathbb{G}_{\varphi} \langle\theta_\varphi, dd^c\dot{\varphi}\rangle_{\varphi}\omega_0^{[n]}\right)\, \omega_\varphi^{[n-1]}\wedge\theta_0\\
&-\int_M \dot\varphi dd^c\mathbb{G}_0\left(\Lambda_0\theta_\varphi\right)\wedge \omega_\varphi^{[n-2]}\wedge\theta_0 \\
=& \frac{1}{(n-1)!}\sum_{j=0}^{n-2}\int_M(j+1)\dot\varphi\omega_\varphi^{j}\wedge\theta_0^{2}\wedge\omega_0^{n-j-2}  -\int_M j\dot\varphi\omega_\varphi^{j-1}\wedge\theta_0^{2}\wedge\omega_0^{n-j-1}\\
&-\int_M  \langle\theta_\varphi, dd^c\dot{\varphi}\rangle_{\varphi}\mathbb{G}_{\varphi}(\Lambda_\varphi\theta_0)\omega_\varphi^{[n]} +\underbrace{\frac{\left(\beta\cdot\Ome^{[n-1]}\right)}{\Ome^{[n]}}}_{\A\cdot \Ome^{n-1}=0}\int_M\mathbb{G}_{\varphi} \langle\theta_\varphi, dd^c\dot{\varphi}\rangle_{\varphi}\omega_0^{[n]}\\
&-\int_M \dot\varphi dd^c\mathbb{G}_0\left(\Lambda_0\theta_\varphi\right)\wedge \omega_\varphi^{[n-2]}\wedge\theta_0.
\end{split}  \]
By the $dd^c$-Lemma (see e.g. \cite[Lemma~1.17.1]{gauduchon-book})  and the K\"ahler identity $[\Lambda, d^c]= \delta$, we have
\[ \theta_{\varphi} = \theta_0 + d\delta_0 \mathbb{G}_0 \theta_{\varphi} = \theta_0 - dd^c\mathbb{G}_0\left(\Lambda_0\al_{\varphi}\right) =\theta_0 + dd^c \mathbb{G}_{\varphi}\left(\Lambda_{\varphi} \theta_0\right.,\]
Using the above, we continue the computation
\[
\begin{split}
-(d\mathcal{P}^{\A})_{\varphi}(\dot \varphi)=& \frac{1}{(n-1)!}\sum_{j=0}^{n-2}\int_M(j+1)\dot\varphi\omega_\varphi^{j}\wedge\theta_0^{2}\wedge\omega_0^{n-j-2}  -\int_M j\dot\varphi\omega_\varphi^{j-1}\wedge\theta_0^{2}\wedge\omega_0^{n-j-1}\\
&-\int_M  \langle\theta_\varphi, dd^c\dot{\varphi}\rangle_{\varphi}\mathbb{G}_{\varphi}(\Lambda_\varphi\theta_0)\omega_\varphi^{[n]} +\int_M \dot\varphi (\theta_\varphi-\theta_0)\wedge \omega_\varphi^{[n-2]}\wedge\theta_0\\
=&\int_M \dot\varphi \theta_\varphi\wedge \omega_\varphi^{[n-2]}\wedge\theta_0
+\int_M \mathbb{G}_{\varphi}(\Lambda_\varphi\theta_0) \theta_\varphi\wedge dd^c\dot{\varphi}\wedge\omega_\varphi^{[n-2]} \\
=&\int_M \dot\varphi \theta_\varphi\wedge \omega_\varphi^{[n-2]}\wedge\theta_0
+\int_M \dot{\varphi} \theta_\varphi\wedge dd^c \mathbb{G}_{\varphi}(\Lambda_\varphi\theta_0)\wedge\omega_\varphi^{[n-2]}\\
=&\int_M \dot\varphi \theta_\varphi\wedge \omega_\varphi^{[n-2]}\wedge\theta_0
+\int_M \dot{\varphi} \theta_\varphi\wedge(\theta_\varphi-\theta_0)\wedge\omega_\varphi^{[n-2]}
=\int_M \dot{\varphi} \theta_\varphi^2\wedge\omega_\varphi^{[n-2]}.
\end{split}
\]
\end{proof}
\begin{rmk}
Using $\mathbb{G}_0\left(\Lambda_0\theta_\varphi\right)=-\mathbb{G}_\varphi\left(\Lambda_\varphi\theta_0\right)+\frac{1}{\Vol (\Ome)}\int_M\mathbb{G}_\varphi\left(\Lambda_\varphi\theta_0\right)\omega_0^{[n]}$ and $\A\cdot \Ome^{[n-1]}=0$ we 
get the equivalent expression for $\mathcal{P}^{\A}$:
\begin{equation}
    \begin{split}\label{eq:H-bis}
    \mathcal{P}^{\A}(\varphi)=&\frac{1}{(n-1)!}\sum_{j=0}^{n-2}\int_M \varphi\omega_\varphi^{j}\wedge\theta_\omega^{2}\wedge\omega^{n-j-2}+\int_M\mathbb{G}_\varphi\left(\Lambda_\varphi\theta_\omega\right)\theta_\omega\wedge\omega_\varphi^{[n-1]} 
\end{split}
\end{equation}
\end{rmk}

Away from the central fibre ${M}_0$ there exists a smooth function $\Phi$ on $\mathcal{\tstM}^\times:={\tstM}\setminus{M}_0$ such that
\begin{equation}\label{eq:Tot-pot} \Omega=\widehat{\omega}_0+dd^c\Phi,\qquad \widehat{\omega}_0:=(p_M\circ\Pi)^*(\omega_0)
\end{equation}
where $\Pi$ is the domination map \eqref{dominate} and $p_M$ is the projection on the first factor of $M\times\mathbb{P}^1$. It follows from \eqref{eq:ray-met} that
\begin{equation}\label{eq:ray-pot}
\varphi_t:= \lambda(e^{-t + i s})^*(\Phi)_{|_{M\times\{1\}}}
\end{equation} 
is a ray of K\"ahler potentials in $\mathcal{H}_{\omega_0}$ such that $\omega_t=\omega_0+dd^c\varphi_t$.

We next use the expressions \eqref{eq:CTE0} and \eqref{eq:CTH} to compute the  slopes at infinity $\mathcal{E}^{0}_{\rm NA}$, $\mathcal{P}^{\A}_{\rm NA}$ of the functionals $\mathcal{E}^{0}$, $\mathcal{P}^{\A}$,  respectively, along the ray $\varphi_t$. In what follows, $\omega_{\rm FS}$ denotes a Fubini--Study metric of scalar curvature $2$ on $\PP^1$  so that $\rho_{\omega_{\rm FS}} =\omega_{\rm FS}$,  and  the upper symbol $\widehat{\cdots}$ stands for forms on $M\times \PP^1$ which are pullbacks of forms one of the factors ($M$ or $\PP^1$) via the projection maps $(p_M$ or $\pi$). Using the K\"ahler metric $\widehat{\omega}_0+\widehat\omega_{\rm FS}$ on $\mathcal{\tstM}^\times$, we have
\begin{equation}\label{eq:tot-Ric}
    \rho_\Omega-\widehat\omega_{\rm FS}-\widehat{\rho}_{0}=-\frac{1}{2}dd^c\Psi, \quad \Psi:=\log\left(\frac{\Omega^{n+1}}{\widehat{\omega}_0^{n}\wedge\widehat{\omega}_{\rm FS}}\right).
\end{equation}
We denote $\psi_t$ the ray of smooth functions on $M$ defined by
\begin{equation}\label{eq:psi_t}
\psi_t:=\lambda(e^{-t + i s})^*(\Psi)_{|_{M\times\{1\}}}.
\end{equation}
Let $\Theta_\Omega$ be the Harmonic representative of $2\pi \mathcal{B}$ relative to $\Omega$ and $\theta_0$ the harmonic representative of $2\pi \A$ relative to $\omega_0$. There exists a smooth function $U$ on $\mathcal{\tstM}^\times:=\mathcal{\tstM}\setminus{M_0}$ such that
\begin{equation}\label{eq:Theta-pot} \Theta_\Omega=\widehat{\theta}_0+dd^cU.
\end{equation}
Taking the $\Omega$-trace of both sides of \eqref{eq:Theta-pot} yields 
\begin{equation}
    \begin{split}\label{eq:U-pot}
\Lambda_{\Omega}(\Theta_{\Omega}-{\widehat\theta}_0) =\Delta_{\Omega} U, \qquad U=-\mathbb{G}_{\Omega}\left(\Lambda_{\Omega}\left(\Theta_\Omega-\widehat{\theta}_\omega\right)\right).
    \end{split}
\end{equation}
Define
\begin{equation}\label{eq:u_t}
u_t:=\lambda(e^{-t + i s})^*(U)_{|_{M\times\{1\}}}.
\end{equation}
In what follows we let 
\[ \tau := e^{-t + i s} \in \C^*\]
and introduce the ${S}^1$-invariant functions on $\mathbb{C}^*$
\begin{align}\label{eq:CTE0-Psi}
\mathcal{E}_\Psi^0(\tau):=&-\frac{1}{(n-1)!}\sum_{j=0}^{n-2}\int_M \varphi_t\omega_{t}^{j}\wedge\rho_{0}^{2}\wedge\omega_0^{n-j-2} \\ \nonumber &+\int_M\psi_{t}\rho_{0}\wedge\omega_{t}^{[n-1]} 
-\frac{1}{4}\int_M\psi_t dd^c\psi_t\wedge\omega^{[n-1]}_{t}, \\
\label{eq:CTH-U}
\mathcal{P}^{\A}_U(\tau) :=&-\frac{1}{(n-1)!}\sum_{j=0}^{n-2}\int_M \varphi_t\omega_t^{j}\wedge\theta_0^{2}\wedge\omega_0^{n-j-2} \\ \nonumber&-2\int_M u_t\theta_0\wedge\omega_t^{[n-1]}
-\int_M u_t dd^cu_t\wedge\omega_t^{[n-1]}.
\end{align}
\begin{lemma}\label{l:currents} The functions $\mathcal{E}^0_{\Psi}(\tau)$ and $\mathcal{P}^{\A}_U(\tau)$ satisfy the following identities of currents on $\PP^1\setminus \{0\}$
\begin{align}\label{eq:ddcE0}
  -dd^c\left(\mathcal{E}_\Psi^0(\tau)\right) &=\pi_*\left((\rho_\Omega-\pi^*\omega_{\rm FS})^2\wedge\Omega^{[n-1]}\right), \\\label{eq:ddcH}
  -dd^c\left(\mathcal{P}^{\A}_U(\tau)\right)&= \pi_*\left( \Theta_\Omega^2\wedge\Omega^{[n-1]}\right).
\end{align}
\end{lemma}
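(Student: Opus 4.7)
The plan is to express each summand of $\mathcal{E}^0_\Psi(\tau)$ and $\mathcal{P}^{\A}_U(\tau)$ as a fibre pushforward $\pi_*$ of a form on $\tstM^\times$ built from the global potentials $\Phi$, $\Psi$, $U$ whose fibrewise restrictions are $\varphi_t$, $\psi_t$, $u_t$, and then to apply the commutation $dd^c_\tau\circ \pi_* = \pi_*\circ dd^c_{\tstM^\times}$. Combined with the structural identities $dd^c\Phi = \Omega - \widehat{\omega}_0$, $-\tfrac12 dd^c\Psi = \rho_\Omega - \widehat{\omega}_{\rm FS} - \widehat{\rho}_0$ and $dd^c U = \Theta_\Omega - \widehat{\theta}_0$ from \eqref{eq:Tot-pot}--\eqref{eq:Theta-pot}, this reduces the proof to algebraic manipulation of closed real $(p,p)$-forms on $\tstM^\times$.

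Concretely, setting $\Xi := \tfrac{1}{(n-1)!}\sum_{j=0}^{n-2}\Omega^j \wedge \widehat{\omega}_0^{n-2-j}$, fibre integration identifies $\mathcal{E}^0_\Psi = A + B + C$ and $\mathcal{P}^{\A}_U = D + E + F$ with
\[
A = -\pi_*\!\bigl(\Phi\,\widehat{\rho}_0^{\,2}\wedge\Xi\bigr),\ \ B = \pi_*\!\bigl(\Psi\,\widehat{\rho}_0\wedge\Omega^{[n-1]}\bigr),\ \ C = -\tfrac14\pi_*\!\bigl(\Psi\,dd^c\Psi\wedge\Omega^{[n-1]}\bigr),
\]
\[
D = -\pi_*\!\bigl(\Phi\,\widehat{\theta}_0^{\,2}\wedge\Xi\bigr),\ \ E = -2\pi_*\!\bigl(U\,\widehat{\theta}_0\wedge\Omega^{[n-1]}\bigr),\ \ F = -\pi_*\!\bigl(U\,dd^c U\wedge\Omega^{[n-1]}\bigr).
\]
The key observation is that each ``kernel'' form ($\widehat{\rho}_0^{\,2}\wedge\Xi$, $\widehat{\rho}_0\wedge\Omega^{[n-1]}$, $dd^c\Psi\wedge\Omega^{[n-1]}$, and the $\theta$-analogues) is closed and of pure type $(p,p)$, hence also $d^c$-closed (since $d^c\eta = J\, d\eta = 0$ for closed pure-type $\eta$). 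The Leibniz rule therefore collapses to $dd^c(f\eta) = dd^c f\wedge\eta$.

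Applying this, together with the telescoping identity $(\Omega - \widehat{\omega}_0)\wedge\sum_j \Omega^j\wedge\widehat{\omega}_0^{n-2-j} = \Omega^{n-1} - \widehat{\omega}_0^{n-1}$ for $A$ and $D$, and the vanishing of $\widehat{\omega}_0^{[n-1]}\wedge\widehat{\rho}_0^{\,2}$ and $\widehat{\omega}_0^{[n-1]}\wedge\widehat{\theta}_0^{\,2}$ (both are pullbacks from $M$ of forms of degree $2n+2 > \dim_{\R}M$), one obtains
\begin{align*}
-dd^c_\tau(A+B+C) &= \pi_*\!\bigl((\widehat{\rho}_0 + \sigma)^2 \wedge \Omega^{[n-1]}\bigr),\qquad \sigma := \rho_\Omega - \widehat{\omega}_{\rm FS} - \widehat{\rho}_0,\\
-dd^c_\tau(D+E+F) &= \pi_*\!\bigl((\widehat{\theta}_0 + dd^c U)^2 \wedge \Omega^{[n-1]}\bigr).
\end{align*}
Completing the squares, $\widehat{\rho}_0 + \sigma = \rho_\Omega - \pi^*\omega_{\rm FS}$ and $\widehat{\theta}_0 + dd^c U = \Theta_\Omega$, which are exactly the right-hand sides of \eqref{eq:ddcE0} and \eqref{eq:ddcH}.

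The main technical obstacle will be justifying the commutation $dd^c\circ\pi_* = \pi_*\circ dd^c$ as an identity of currents on $\PP^1\setminus\{0\}$ in the presence of orbifold singularities of $\tstM$. Since $\pi$ is a proper holomorphic orbifold submersion over $\PP^1\setminus\{0\}$ and $\Phi, \Psi, U$ are smooth there, this reduces to standard fibre-integration calculus carried out in local orbifold charts. Once this is in place, the rest is the algebraic manipulation outlined above, together with the bookkeeping of the Leibniz cross-terms that all vanish by the closedness/pure-type argument.
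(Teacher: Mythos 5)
Your proposal is correct and is essentially the paper's own argument: the paper verifies the current identity by pairing with a test function $\chi$ on $\PP^1\setminus\{0\}$ and integrating $dd^c\hat\chi$ by parts onto the potentials $\Phi$, $U$ (and $\Psi$), which is exactly the distributional form of your commutation $dd^c\circ\pi_*=\pi_*\circ dd^c$ combined with the Leibniz rule for closed pure-type kernels; the subsequent use of $dd^c\Phi=\Omega-\widehat\omega_0$, $dd^cU=\Theta_\Omega-\widehat\theta_0$, the telescoping sum, and the completion of the square are identical. The only cosmetic difference is that the paper cites \cite{zach,Dervan} for \eqref{eq:ddcE0} and only details \eqref{eq:ddcH}, whereas you treat both uniformly.
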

\begin{proof} The computation of \eqref{eq:ddcE0} is by now standard and follows from \cite{zach, Dervan}. We will instead detail \eqref{eq:ddcH}. We work on $\tstM^{\times} \cong M\times (\PP^{1}\setminus\{0\})$. For a test function $\chi$ with support in $\mathbb{P}^1\setminus\{0\}$ we denote $\hat{\chi}:=\pi^*\chi$ and compute
\[
\begin{split}
   - \left\langle dd^c\mathcal{P}^{\A}_U,\chi\right\rangle =&-\int_{\mathbb{P}^1}dd^c\chi \left(\mathcal{P}^{\A}_U(\tau)\right)\\
=&\frac{1}{(n-1)!}\sum_{j=0}^{n-2}\int_{\mathbb{P}^1}dd^c\chi\int_M \varphi_t\omega_t^{j}\wedge\theta_0^{2}\wedge\omega_0^{n-j-2} \\
&+2\int_{\mathbb{P}^1}dd^c\chi\int_M u_{t}\theta_0\wedge\omega_{t}^{[n-1]}
+\int_{\mathbb{P}^1}dd^c\chi\int_M u_{t}dd^cu_t\wedge\omega_{t}^{[n-1]}\\
=&\frac{1}{(n-1)!}\sum_{j=0}^{n-2}\int_{\mathbb{P}^1}dd^c\chi\int_{M\times\{\tau\}}\left( \Phi \,  \Omega^{j}\wedge\widehat{\theta}_0^{2}\wedge\widehat{\omega}_0^{n-j-2}\right)_{|M\times\{\tau\}}\\
&+2\int_{\mathbb{P}^1}dd^c\chi\int_{M\times\{\tau\}}\left(U\widehat{\theta}_0\wedge\Omega^{[n-1]}\right)_{M\times\{\tau\}} \\
&+\int_{\mathbb{P}^1}dd^c\chi\int_{M\times\{\tau\}}\left(Udd^cU\wedge\Omega^{[n-1]}\right)_{M\times\{\tau\}}\\
=&\frac{1}{(n-1)!}\sum_{j=0}^{n-2}\int_{\mathcal{M}^\times} \Phi dd^c\hat{\chi}\wedge \Omega^{j}\wedge\widehat{\theta}_0^{2}\wedge\widehat{\omega}_0^{n-j-2} +2\int_{\mathcal{M}^\times}U dd^c\hat{\chi}\wedge\widehat{\theta}_0\wedge\Omega^{[n-1]} \\
&+\int_{\mathcal{\tstM}^\times}U dd^c\hat{\chi}\wedge dd^cU\wedge\Omega^{[n-1]}\\
=&\frac{1}{(n-1)!}\sum_{j=0}^{n-2}\int_{\mathcal{\tstM}^\times} \hat{\chi} dd^c\Phi\wedge \Omega^{j}\wedge\widehat{\theta}_0^{2}\wedge\widehat{\omega}_0^{n-j-2} +2\int_{\mathcal{\tstM}^\times}\hat{\chi} dd^cU\wedge\widehat{\theta}_0\wedge\Omega^{[n-1]} \\
&+\int_{\mathcal{\tstM}^\times}\hat{\chi} dd^cU\wedge dd^cU\wedge\Omega^{[n-1]}\\
=&\frac{1}{(n-1)!}\sum_{j=0}^{n-2}\int_{\mathcal{\tstM}^\times} \hat{\chi} (\Omega-\widehat{\omega}_0)\wedge \Omega^{j}\wedge\widehat{\theta}_0^{2}\wedge\widehat{\omega}_0^{n-j-2} \\
&+2\int_{\mathcal{\tstM}^\times}\hat{\chi} ( \Theta_\Omega-\widehat{\theta}_{0})\wedge\widehat{\theta}_0\wedge\Omega^{[n-1]}+\int_{\mathcal{\tstM}^\times}\hat{\chi} ( \Theta_\Omega-\widehat{\theta}_{0})^2\wedge\Omega^{[n-1]}\\
=&\frac{1}{(n-1)!}\sum_{j=0}^{n-2}\int_{\mathcal{\tstM}^\times} \hat{\chi} \left(\Omega^{j+1}\wedge\widehat{\theta}_0^{2}\wedge\widehat{\omega}_0^{n-j-2}-\Omega^{j}\wedge\widehat{\theta}_0^{2}\wedge\widehat{\omega}_0^{n-j-1}\right)\\
&+\int_{\mathcal{\tstM}^\times}\hat{\chi} \Theta_\Omega^2\wedge\Omega^{[n-1]}-\int_{\mathcal{\tstM}^\times}\hat{\chi} \widehat{\theta}_{0}^2\wedge\Omega^{[n-1]}=\int_{\mathcal{\tstM}^\times}\hat{\chi} \Theta_\Omega^2\wedge\Omega^{[n-1]}.
\end{split}
\] 
The equality \eqref{eq:ddcH} follows.
\end{proof}
Using the ${S}^1$-invariance of the functions $\mathcal{E}_\Psi^0(\tau)$ and $\mathcal{P}^{\A}_U(\tau)$ and the Green--Riesz formula, Lemma~\ref{l:currents} yields
\begin{cor}\label{c:slopes} \begin{align}\label{eq:Slope_E0Psi}
  \underset{t\to \infty}{\lim}\left(\frac{d}{dt} \mathcal{E}_\Psi^0\right) &=-\int_{\mathcal{\tstM}}( \rho_\Omega-\pi^*\omega_{\rm FS})^2\wedge\Omega^{[n-1]}, \\\label{eq:Slope_HU}
 \underset{t\to +\infty}{\lim}\left(\frac{d}{dt} \mathcal{P}^{\A}_U\right) &=-\int_{\mathcal{\tstM}} \Theta_\Omega^2\wedge\Omega^{[n-1]}.
\end{align}
\end{cor}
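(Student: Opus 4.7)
The plan is to apply Stokes' theorem to the current identity of Lemma~\ref{l:currents} on the complement of a shrinking disk around $\tau = 0$ in $\PP^1$. The key enabling point is that the dominating hypothesis forces $\mathcal{E}_\Psi^0$ and $\mathcal{P}_U^\A$ to extend smoothly across the fibre over $\tau = \infty$, so the Stokes computation produces no boundary contribution at infinity.

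Writing $\tau = e^{-t+is}$, the $S^1$-invariance of $\mathcal{E}_\Psi^0$ and $\mathcal{P}_U^\A$ allows us to set $F(t) := \mathcal{E}_\Psi^0(\tau)$ and $G(t) := \mathcal{P}_U^\A(\tau)$ as smooth functions of $t \in \R$. On $\mathcal{\tstM}^\times = \mathcal{\tstM}\setminus M_0$, all the geometric ingredients $\Omega$, $\Phi$, $\Psi$, $\Theta_\Omega$, $U$ are smooth and $\mathcal{\tstM}^\times \cong M \times (\PP^1 \setminus\{0\})$ under $\Pi_0$, so the fibrewise integrals defining $\mathcal{E}_\Psi^0$ and $\mathcal{P}_U^\A$ in \eqref{eq:CTE0-Psi} and \eqref{eq:CTH-U} are smooth functions of $\tau$ on the full open set $\PP^1\setminus\{0\}$—in particular they extend smoothly across $\tau = \infty$. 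The current identities of Lemma~\ref{l:currents} are therefore identities of smooth $(1,1)$-forms on $\PP^1\setminus\{0\}$.

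Using the normalization $dd^c\log|\tau| = \delta_0$ on $\PP^1\setminus\{\infty\}$, a direct polar-coordinate computation for any $S^1$-invariant function gives $\int_{\partial D_\epsilon} d^c F = -F'(t_\epsilon)$, where $D_\epsilon := \{|\tau|\leq\epsilon\}$ and $t_\epsilon := -\log\epsilon$. Applying Stokes' theorem to the compact surface-with-boundary $\PP^1\setminus D_\epsilon$—whose only boundary is $\partial D_\epsilon$ with orientation reversed—together with Lemma~\ref{l:currents} and the pushforward adjunction $\int_{\PP^1\setminus D_\epsilon}\pi_*(\cdot) = \int_{\pi^{-1}(\PP^1\setminus D_\epsilon)}(\cdot)$ yields
\[ -F'(t_\epsilon) \;=\; \int_{\pi^{-1}(\PP^1\setminus D_\epsilon)} (\rho_\Omega - \pi^*\omega_{\rm FS})^2 \wedge \Omega^{[n-1]}. \]

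Letting $\epsilon \to 0^+$ so that $t_\epsilon \to +\infty$, the right-hand side tends to $\int_{\mathcal{\tstM}}(\rho_\Omega-\pi^*\omega_{\rm FS})^2\wedge\Omega^{[n-1]}$ by dominated convergence, since $M_0$ has measure zero for the smooth top-degree form on the compact orbifold $\mathcal{\tstM}$; this establishes \eqref{eq:Slope_E0Psi}. The second equality \eqref{eq:Slope_HU} follows from the identical argument applied to $\Theta_\Omega^2\wedge\Omega^{[n-1]}$. The main subtle point is the smooth extension of $F$ and $G$ to $\tau = \infty$—without it one would pick up an additional boundary term from a circle around $\infty$; it is precisely the dominating hypothesis that makes this extension available.
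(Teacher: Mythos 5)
Your argument is correct and is essentially the paper's own proof: the authors likewise combine the current identity of Lemma~\ref{l:currents} with the Green--Riesz/Stokes formula on $\PP^1\setminus\mathbb{D}_\epsilon$, pick up the single boundary contribution $-F'(t_\epsilon)$ along $\partial\mathbb{D}_\epsilon$, and let $\epsilon\to 0$ using that the integrand is a smooth top-degree form on the compact orbifold $\tstM$. One small correction of attribution: the smoothness of $\mathcal{E}^0_\Psi$ and $\mathcal{P}^{\A}_U$ across $\tau=\infty$ follows from the equivariant trivialization $\Pi_0$ over $\PP^1\setminus\{0\}$, which is part of the definition of \emph{any} K\"ahler test configuration, rather than from the dominating hypothesis (the latter is used to define the reference forms $\widehat\omega_0$ and $\widehat\theta_0$).
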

\begin{proof} Let $\mathbb{D}_\epsilon\subset\mathbb{C}$ be a disc of radius $\epsilon$ centred at the origin and $(t,s)$ the coordinates given by $\tau=e^{-t+is}\in \mathbb{C}^*$. We compute using the ${S}^1$-invariance of $\mathcal{E}_\Psi^0(\tau)$, Lemma~\ref{l:currents} and the Green--Riesz formula:
\[
\begin{split}
    &\int_{\mathcal{\tstM}}( \rho_\Omega-\pi^*\omega_{\rm FS})^2\wedge\Omega^{[n-1]}=\underset{\epsilon\to 0}{\lim} \int_{\mathcal{\tstM}\setminus\pi^{-1}(\mathbb{D}_\epsilon)} ( \rho_\Omega-\pi^*\omega_{\rm FS})^2\wedge\Omega^{[n-1]}\\
    &=\underset{\epsilon\to 0}{\lim} \int_{\mathbb{P}^1\setminus\mathbb{D}_\epsilon} \pi_*\left( ( \rho_\Omega-\pi^*\omega_{\rm FS})^2\wedge\Omega^{[n-1]}\right)
    =-\underset{\epsilon\to 0}{\lim} \int_{\mathbb{P}^1\setminus\mathbb{D}_\epsilon}  dd^c\left(\mathcal{E}_\Psi^0(\tau)\right)\\
    &=-\underset{\epsilon\to 0}{\lim}\left(\frac{d}{dt}_{|t=-\log\epsilon}\mathcal{E}_\Psi^0(\varphi_t)\right)  =-\underset{t\to +\infty}{\lim}\frac{d}{dt} \mathcal{E}_\Psi^0(\varphi_t).
\end{split}
\]
The computation for \eqref{eq:Slope_HU} is similar. \end{proof}
 
To evaluate the slopes at infinity of $\mathcal{E}^0(\varphi_t)$ and $\mathcal{P}^{\A}(\varphi_t)$, we compare them  with the functions $\mathcal{E}_{\Psi}^0(t)$  and $\mathcal{P}_U^{\A}(t)$, respectively. 
\begin{lemma}\label{l:asymptot} If the central fibre $M_0$ of $\tstM$ is reduced, then we have
\[ \lim_{t\to \infty}\left(\frac{\mathcal{E}^0(\varphi_t)}{t}\right) = \lim_{t\to \infty} \left(\frac{\mathcal{E}_{\Psi}^0(t)}{t}\right), \qquad \lim_{t\to \infty}\left(\frac{\mathcal{P}^{\A}(\varphi_t)}{t}\right) = \lim_{t\to \infty} \left(\frac{\mathcal{P}_{U}^{\A}(t)}{t}\right). \]
\end{lemma}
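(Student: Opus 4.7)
The plan is to show that both differences $\mathcal{E}^0(\varphi_t)-\mathcal{E}_\Psi^0(t)$ and $\mathcal{P}^{\A}(\varphi_t)-\mathcal{P}^{\A}_U(t)$ are $o(t)$ as $t\to+\infty$ under the reduced central fibre hypothesis; combined with Corollary \ref{c:slopes}, this will yield the claim. Starting from the cocycle expressions \eqref{eq:CTE0}--\eqref{eq:CTE0-Psi}, the Ricci identity $\rho_{\varphi_t}=\rho_0-\tfrac12 dd^c\log(\omega_{\varphi_t}^n/\omega_0^n)$ together with repeated integration by parts rearranges the first difference as
\[
\mathcal{E}^0(\varphi_t)-\mathcal{E}_\Psi^0(t) \;=\; \int_M f_t\,\rho_0\wedge\omega_t^{[n-1]} \;-\; \tfrac14\int_M f_t\,dd^c f_t\wedge\omega_t^{[n-1]} \;-\; \tfrac12\int_M f_t\,dd^c\psi_t\wedge\omega_t^{[n-1]},
\]
with $f_t:=\log(\omega_{\varphi_t}^n/\omega_0^n)-\psi_t$. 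An analogous manipulation of \eqref{eq:CTH}--\eqref{eq:CTH-U}, exploiting $\theta_t=\theta_0-dd^c\mathbb{G}_0(\Lambda_0\theta_t)$ and $\Theta_\Omega|_{M\times\{\tau\}}=\theta_0+dd^c u_t$, rewrites the second difference in a parallel form involving the potential discrepancy $v_t:=\mathbb{G}_0(\Lambda_0\theta_t)+u_t$ paired with $\theta_0$, $\omega_t^{[n-1]}$, and the closed $(1,1)$-form $\Theta_\Omega|_{M\times\{\tau\}}-\theta_t = dd^c v_t$.

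The second step is the geometric identification of these discrepancies. On $\tstM^\times\simeq M\times\C^*$, the $S^1$-invariant potential $\Phi$ can be written as $\varphi(x,t)$ with $t=-\log|\tau|$, and the standard Donaldson--Semmes identity
\[
\Omega^{n+1} \;=\; (n+1)\bigl(\ddot\varphi_t-|\partial\dot\varphi_t|^2_{\omega_{\varphi_t}}\bigr)\,\omega_{\varphi_t}^n\wedge dt\wedge d^c t,
\]
combined with the expression $\widehat\omega_{\rm FS}=h(t)\,dt\wedge d^c t$ for the smooth positive function $h(t)=4e^{-2t}/(1+e^{-2t})^2$ and the definition \eqref{eq:tot-Ric} of $\Psi$, yields
\[
f_t(x) \;=\; -\log\bigl(\ddot\varphi_t-|\partial\dot\varphi_t|^2_{\omega_{\varphi_t}}\bigr)(x)\;+\;\log h(t)-\log(n+1).
\]
A parallel computation identifies $v_t$ with (minus) the restriction to $M\times\{\tau\}$ of the $\Omega$-Green potential of the transversal horizontal part of $\Theta_\Omega$.

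The main obstacle is the final analytic step: showing that $t^{-1}\!\int_M|f_t|\,\Xi_t\to 0$ and $t^{-1}\!\int_M|v_t|\,\Xi_t\to 0$ as $t\to+\infty$ for each of the bounded $(n,n)$-forms $\Xi_t$ appearing above, bearing in mind that some of these (notably $dd^c\psi_t\wedge\omega_t^{[n-1]}$) themselves carry a potentially $O(t)$ growth. This is where the reduced hypothesis is used in an essential way. Pulling back to a log-resolution $\widetilde\Pi:\widetilde{\tstM}\to\tstM$ with simple normal crossings central fibre $\widetilde M_0=\sum_i m_i D_i$, the leading-order boundary behaviours of $\varphi_t$, $\psi_t$ and $u_t$ near each irreducible component $D_i$ have coefficients proportional to the multiplicity $m_i$, and likewise for the transversal volume density entering $f_t$ and for the harmonic data entering $v_t$. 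When all $m_i=1$, these leading contributions are integrable uniformly in $t$, and the explicit linear-in-$t$ terms $\log h(t)$ in $f_t$ cancel when tested against the combination of the three integrands appearing in Step~1 (using $\int_M\rho_0\wedge\omega_t^{[n-1]}$, $\int_M dd^c\psi_t\wedge\omega_t^{[n-1]}$ and $\int_M dd^c f_t\wedge\omega_t^{[n-1]}$ being topological or vanishing by Stokes), leaving only sub-linear remainders. Together with standard Skoda-type uniform integrability estimates for $\omega_0$-plurisubharmonic potentials on the resolution (following the arguments of \cite{Dervan}), this yields the required sublinear bound and the equality of slopes for both $\mathcal{E}^0$ and $\mathcal{P}^{\A}$.
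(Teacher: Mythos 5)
There is a genuine gap, in fact two. First, your mechanism for disposing of the linear term $\log h(t)$ in $f_t=-\log Z$, $Z=\Omega^{n+1}/(\Omega^n\wedge\pi^*\omega_{\rm FS})$, does not work: when you pair the constant-in-$x$ summand $\log h(t)\sim -2t$ against $\rho_0\wedge\omega_t^{[n-1]}$, the integral $\int_M\rho_0\wedge\omega_t^{[n-1]}$ is indeed topological, but it equals a \emph{nonzero} multiple of $c_1(M)\cdot\Ome^{n-1}$ in general, so this pairing contributes an uncancelled term of order $t$. The actual reason the difference is controlled is that the two individually unbounded pieces $\log h(t)$ and $\log\bigl(\ddot\varphi_t-|\partial\dot\varphi_t|^2_{\omega_t}\bigr)$ must be kept together: the paper writes $\mathcal{E}^0_\Psi(t)-\mathcal{E}^0(\varphi_t)=\int_{M\times\{\tau\}}\log(Z)\,\Xi\wedge\Omega^{[n-1]}$ with $\Xi=\rho_\Omega+\tfrac14 dd^c\log Z$ a smooth form on all of $\tstM$, and then invokes the boundedness of $\int_{M\times\{\tau\}}|\log Z|\,\Omega^{[n]}$ as $\tau\to 0$ for reduced central fibre (Dervan--Ross). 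That integral boundedness is precisely the nontrivial content you are trying to reprove on a log-resolution; asserting that the "leading contributions are integrable uniformly in $t$" when all $m_i=1$ restates the conclusion rather than deriving it, and your bookkeeping of the linear term on the way there is incorrect.

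Second, and more seriously, your treatment of $\mathcal{P}^{\A}$ misses the essential analytic ingredient. The functional $\mathcal{P}^{\A}(\varphi_t)$ contains the nonlocal term $\int_M\mathbb{G}_{\varphi_t}\left(\Lambda_{\varphi_t}\theta_0\right)\theta_0\wedge\omega_t^{[n-1]}$, and after the algebra the difference $\mathcal{P}^{\A}_U(t)-\mathcal{P}^{\A}(\varphi_t)$ becomes $\int_{M\times\{\tau\}}\mathbb{G}_{\Omega_\tau}(f_\Omega)\,f_\Omega\,\Omega_\tau^{[n]}$ for the smooth bounded function $f_\Omega=\bigl(\widehat\omega_{\rm FS}\wedge\Theta_\Omega\wedge\Omega^{[n-1]}\bigr)/\bigl(\widehat\omega_{\rm FS}\wedge\Omega^{[n]}\bigr)$. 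Bounding this as $\tau\to 0$ requires a bound on the Green operators of the \emph{degenerating} family of fibre metrics $\Omega_\tau$ that is uniform in $\tau$; the paper gets this from the uniform upper bound on the Green kernels $\mathbb{G}_{\Omega_\tau}(x,y)$ of Guedj--T\^o. Skoda-type uniform integrability estimates for quasi-plurisubharmonic potentials on a resolution control volume integrals of logarithmic singularities, but they say nothing about inverting $\Delta_{\omega_t}$ uniformly, so your sketch does not close this half of the lemma. (As a minor point, note also that the paper proves both differences are \emph{bounded}, not merely $o(t)$, which is cleaner than what you aim for.)
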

\begin{proof}
\[
\begin{split}
\mathcal{E}_\Psi^0(t)-\mathcal{E}^0(\varphi_t)=&\int_M\left(\psi_{t} -\log\left(\frac{\omega_t^n}{\omega_0^n}\right)\right)\rho_0 \wedge \omega_t^{[n-1]}+\frac{1}{4}\int_M\log\left(\frac{\omega_t^n}{\omega_0^n}\right)dd^c\left(\log\left(\frac{\omega_t^n}{\omega_0^n}\right)-\psi_{t} \right)\wedge \omega^{[n-1]}_t  \\
&+\frac{1}{4}\int_M\left(\log\left(\frac{\omega_t^n}{\omega_0^n}\right)-\psi_{t}\right) dd^c\psi_t\wedge\omega^{[n-1]}_{t}\\
=&\int_{M\times\{\tau\}}\left(\Psi -\log\left(\frac{\Omega^n\wedge\widehat{\omega}_{\rm FS}}{\widehat{\omega}_0^n\wedge\widehat{\omega}_{\rm FS}}\right)\right)\lambda(\tau^{-1})^*(\rho_0) \wedge \lambda(\tau^{-1})^*\omega_t^{[n-1]}\\
&+\frac{1}{4}\int_{M\times\{\tau\}}\left[\log \left(\frac{\Omega^n\wedge\widehat{\omega}_{\rm FS}}{\widehat{\omega}^n\wedge\widehat{\omega}_{\rm FS}}\right)+\Psi\right]dd^c\left(\log\left(\frac{\Omega^n\wedge\widehat{\omega}_{\rm FS}}{\widehat{\omega}_0^n\wedge\widehat{\omega}_{\rm FS}}\right)-\Psi\right)\wedge \lambda(\tau^{-1})^*\omega^{[n-1]}_t \\
=&\int_{M\times\{\tau\}}\log\left(\frac{\Omega^{n+1}}{\Omega^n\wedge\widehat{\omega}_{\rm FS}}\right)\left(\widehat{\rho}_0 \wedge \Omega^{[n-1]}\right)_{\mid M\times\{\tau\}}\\
&+\frac{1}{4}\int_{M\times\{\tau\}}\left[\log\left(\frac{\Omega^{n+1}}{\Omega^n\wedge\widehat{\omega}_{\rm FS}}\right)-2\Psi\right]\left(dd^c\left(\log\left(\frac{\Omega^{n+1}}{\Omega^n\wedge\widehat{\omega}_{\rm FS}}\right)\right)\wedge \Omega^{[n-1]}\right)_{\mid M\times\{\tau\}} \\
=&\int_{M\times\{\tau\}}\log\left(\frac{\Omega^{n+1}}{\Omega^n\wedge\widehat{\omega}_{\rm FS}}\right)\left(\left(\widehat{\rho}_0 -\frac{1}{2}dd^c\Psi\right)\wedge \Omega^{[n-1]}\right)_{\mid M\times\{\tau\}}\\
&+\frac{1}{4}\int_{M\times\{\tau\}}\log\left(\frac{\Omega^{n+1}}{\Omega^n\wedge\widehat{\omega}_{\rm FS}}\right) \left[dd^c\left(\log\left(\frac{\Omega^{n+1}}{\Omega^n\wedge\widehat{\omega}_{\rm FS}}\right)\right)\wedge \Omega^{[n-1]}\right]_{\mid M\times\{\tau\}} \\
=& \int_{M\times\{\tau\}}\log\left(\frac{\Omega^{n+1}}{\Omega^n\wedge\widehat{\omega}_{\rm FS}}\right)\left(\rho_\Omega \wedge\Omega^{[n-1]}\right)_{\mid M\times\{\tau\}}\\
&+\frac{1}{4}\int_{M\times\{\tau\}}\log\left(\frac{\Omega^{n+1}}{\Omega^n\wedge\widehat{\omega}_{\rm FS}}\right) \left[dd^c\left(\log\left(\frac{\Omega^{n+1}}{\Omega^n\wedge\pi^*\omega_{\rm FS}}\right)\right)\wedge \Omega^{[n-1]} \right]_{\mid M\times\{\tau\}}
\end{split}
\]
Letting  
\[Z:=\frac{\Omega^{n+1}}{\Omega^n\wedge\pi^*\omega_{\rm FS}}, \qquad 
\Gamma(\tau):=\int_{M\times\{\tau\}}\log (Z)\left(\rho_\Omega +\frac{1}{4}dd^c\log(Z)\right)\wedge\Omega^{[n-1]}, 
\]
we have shown above
\[\mathcal{E}_\Psi^0(t)-\mathcal{E}^0(\varphi_t) = \Gamma(\tau), \qquad \tau = e^{-t+is}.\]
The proof of the first equality in Lemma~\ref{l:asymptot} will then follow if we show that $\Gamma(\tau)$ is bounded as $\tau \to 0$. To this end, since $\Xi:=\rho_\Omega +\frac{1}{4}dd^c\log(Z)$ is a smooth  2-form on  $\mathcal{\tstM}$, there are positive constants $C,c$ such that 
\begin{equation}\label{eq:Xi<Om}
-c\Omega<\Xi<C\Omega.    
\end{equation}
We write
\[
\Gamma(\tau)=\Gamma_1(\tau)-(cn)\Gamma_2(\tau),
\]
where
\[ \Gamma_1(\tau):= \int_{M\times\{\tau\}}\log(Z) \, (\Xi+ c\Omega)\wedge\Omega^{[n-1]}, \qquad \Gamma_2(\tau):=\int_{M\times\{\tau\}}\log(Z)\Omega^{[n]}.\]
It is well-known (see \cite[Rem.4.12]{Dervan-Ross}) that $\Gamma_2(\tau)$ is  bounded as $\tau\to0$,  provided that ${M}_0$ is reduced. As for $\Gamma_1(\tau)$, using \eqref{eq:Xi<Om}, we have 
\[
|\Gamma_1(\tau)|\leq \int_{M\times\{\tau\}}|\log(Z)|(\Xi + c \Omega)\wedge\Omega^{[n-1]} \leq n(c+C)\int_{M\times\{\tau\}}|\log(Z)|\Omega^{[n]},
\]
which is bounded from above. This completes the proof of the  first equality of Lemma~\ref{l:asymptot}.

\bigskip 
Now we evaluate the asymptotic slope at infinity of $\mathcal{P}^{\A}(\varphi_t)$. Let $S(t)$ be the function
\[
S(t):= \mathcal{P}^{\A}_U(t)- \mathcal{P}^{\A}(\varphi_t).
\]
We compute (using $\A\cdot \Ome^{n-1}=0$):
\[
\begin{split}
S(t)=&\int_M\left(\mathbb{G}_{\varphi_t}\left(\Lambda_{\varphi_t}\theta_{0}\right) - u_t\right)\theta_0\wedge\omega_t^{[n-1]}-\int_M u_t (\theta_0+dd^cu_t)\wedge\omega_t^{[n-1]}\\
=&\int_M\mathbb{G}_{\varphi_t}\Lambda_{\varphi_t}\left(\theta_{0} +dd^c u_t\right)\theta_0\wedge\omega_t^{[n-1]}-\int_M u_t (\theta_0+dd^cu_t)\wedge\omega_t^{[n-1]}\\
&-\frac{\left(\beta\cdot\Ome^{[n-1]}\right)}{\Vol (\Ome)}\int_M\
u_t\omega^{[n]}_t\\
=&\int_M\mathbb{G}_{\varphi_t}\Lambda_{\varphi_t}\left(\theta_{0} +dd^c u_t\right) \left(\theta_0+dd^c u_t\right)\wedge\omega_t^{[n-1]}-\int_M u_t (\theta_0+dd^cu_t)\wedge\omega_t^{[n-1]}\\
&-\int_M \mathbb{G}_{\varphi_t}\Lambda_{\varphi_t}\left(\theta_0 +dd^c u_t\right) dd^c u_t\wedge\omega_t^{[n-1]}  \\
=&\int_M\mathbb{G}_{\varphi_t}\Lambda_{\varphi_t}\left(\theta_0 +dd^c u_t\right) \left(\theta_0+dd^c u_t\right)\wedge\omega_t^{[n-1]}-\int_M u_t(\theta_0+dd^cu_t)\wedge\omega_t^{[n-1]}\\
&+\int_M\mathbb{G}_{\varphi_t}\Lambda_{\varphi_t}\left(\theta_0 +dd^c u_t\right)  \Delta_{\varphi_t}(u_t)\omega_t^{[n]}\\
=&\int_M\mathbb{G}_{\varphi_t}\left(\Lambda_{\varphi_t}\left(\theta_0 +dd^c u_t\right)\right) \left(\theta_0+dd^c u_t\right)\wedge\omega_t^{[n-1]}-\int_M u_t (\theta_0+dd^cu_t)\wedge\omega_t^{[n-1]}\\
&+\int_M \left(u_t-\frac{1}{\Vol(\Ome)}\int_Mu_t\omega_t^{[n]}\right)\Lambda_{\varphi_t}\left(\theta_0 +dd^c u_t\right)  \omega_t^{[n]}\\
=&\int_M\mathbb{G}_{\varphi_t}\left(\Lambda_{\varphi_t}\left(\theta_0 +dd^c u_t\right)\right)  \left(\theta_0 +dd^c u_t\right)\wedge\omega_t^{[n-1]} -\frac{\left(\beta\cdot\Ome^{[n-1]}\right)}{\Vol (\Ome)}\int_Mu_t\omega^{[n]}_t\\
=&\int_M\mathbb{G}_{\varphi_t}\left(\Lambda_{\varphi_t}\left(\theta_0 +dd^c u_t\right)\right)  \left(\theta_0 +dd^c u_t\right)\wedge\omega_t^{[n-1]}
\end{split}
\]
We will show  again that  $S(\tau)$ is bounded as $\tau \to 0$.  Identifying $\tstM^{\times} \cong  M \times (\PP^1\setminus\{0\})$ under $\Pi$ and letting 
\[\Omega_\tau:=\Omega_{|M\times\{\tau\}}, \qquad  f_\Omega:=\frac{\widehat{\omega}_{\rm FS}\wedge\Theta_{\Omega}\wedge\Omega^{[n-1]}}{\widehat{\omega}_{\rm FS} \wedge \Omega^{[n]}},\]
we further compute
\[\begin{split}
 S(\tau) =&\int_{M\times\{\tau\}}\lambda(\tau^{-1})^*\left(\mathbb{G}_{\varphi_t}\Lambda_{\varphi_t}\left(\theta_0 +dd^c u_t\right)\right)  \lambda(\tau^{-1})^*\left(\theta_0 +dd^c u_t\right)\wedge\lambda(\tau^{-1})^*\omega_t^{[n-1]}\\
=&\int_{M\times\{\tau\}}\left(\lambda(\tau^{-1})^*\mathbb{G}_{\varphi_t}\lambda(\tau)^*\right)\left(\lambda(\tau^{-1})^*\Lambda_{\varphi_t}\left(\theta_0 +dd^c u_t\right)\right)  \left(\Theta_{\Omega}\wedge\Omega^{[n-1]}\right)_{|M\times\{\tau\}}\\
=&\int_{M\times\{\tau\}}\left(\lambda(\tau^{-1})^*\mathbb{G}_{\varphi_t}\lambda(\tau)^*\right)\left(\frac{\left(\Theta_{\Omega}\wedge\Omega^{[n-1]}\right)_{|M\times\{\tau\}}}{\left(\Omega^{[n]}\right)_{|M\times\{\tau\}}}\right)  \left(\Theta_{\Omega}\wedge\Omega^{[n-1]}\right)_{|M\times\{\tau\}}\\
=&\int_{M\times\{\tau\}} \mathbb{G}_{\Omega_\tau}\left(\frac{\pi^*\omega_{\rm FS}\wedge\Theta_{\Omega}\wedge\Omega^{[n-1]}}{\pi^*\omega_{\rm FS}\wedge\Omega^{[n]}}\right)  \left(\Theta_{\Omega}\wedge\Omega^{[n-1]}\right)_{|M\times\{\tau\}}\\
=&\int_{M\times\{\tau\}} \mathbb{G}_{\Omega_\tau}(f_\Omega)f_\Omega\,\Omega^{[n]}_\tau =\int_{M\times\{\tau\}} |d\mathbb{G}_{\Omega_\tau}(f_\Omega)|_{\Omega_\tau}^2 \Omega^{[n]}_\tau  
\end{split}
\]
The last equality shows that $S(\tau)\geq0$. Using the  basic property of the Green function
\[
\mathbb{G}_{\Omega_\tau}(f_\Omega)(x)=\int_{M\times\{\tau\}} \mathbb{G}_{\Omega_\tau}(x, \cdot)f_\Omega\Omega_\tau^{[n]}\quad \forall x\in M\times\{\tau\}
\]
we have
\[
\begin{split}
    S(\tau)=&\int_{M\times\{\tau\}} \mathbb{G}_{\Omega_\tau}(f_\Omega)f_\Omega\,\Omega^{[n]}_\tau\\
    =&\int_{M\times\{\tau\}} \left(\int_{M\times\{\tau\}} \mathbb{G}_{\Omega_\tau}(x,y)f_\Omega(y)\Omega_\tau^{[n]}(y)\right)f_\Omega(x)\,\Omega^{[n]}_\tau(x)
\end{split}
\]
Let $A>0$ be a positive constant such that $\Theta_\Omega+A\Omega>0$ on $\mathcal{\tstM}$ and thus
\[
\tilde f_\Omega:=\frac{\widehat{\omega}_{\rm FS}\wedge(\Theta_{\Omega}+A\Omega)\wedge\Omega^{[n-1]}}{\widehat{\omega}_{\rm FS}\wedge\Omega^{[n]}}=f_\Omega+nA>0.
\]
Using $\int_{M\times\{\tau\}} \mathbb{G}_{\Omega_\tau}(x,y) \Omega_\tau^{[n]}(y)=0$, we get
\[
\begin{split}
    S(\tau)
    =&\int_{M\times\{\tau\}} \left(\int_{M\times\{\tau\}} \mathbb{G}_{\Omega_\tau}(x,y)\tilde{f}_\Omega(y)\Omega_\tau^{[n]}(y)\right){\tilde f}_\Omega(x)\,\Omega^{[n]}_\tau(x)
\end{split}
\]
By \cite[Thm. A-(1)]{GT}, we have a uniform upper bound for the Green kernel, i.e.
\[
\forall x\in M\times\{\tau\}, \sup_{y\in M\times\{\tau\}}\mathbb{G}_{\Omega_\tau}(x,y)<C_0
\]
independent of $x,\tau$. Hence,
\[
\begin{split}
   0 \leq S(\tau)
    \leq& C_0\left(\int_{M\times\{\tau\}} {\tilde f}_\Omega\Omega^{[n]}_{\tau}\right)^2
    \leq C_0 \sup_{\mathcal{\tstM}}({\tilde f}_\Omega){\rm vol}(\Ome).
\end{split}
\]
This yields the second equality of the Lemma. 
\end{proof}
\begin{proof}[Proof of Theorem~\ref{E-Slope}] We have $ \mathcal{E}_{\rm NA} = \mathcal{E}_{\rm NA}^0 + \mathcal{P}^{\beta}_{\rm NA} + \frac{c_{\Ome, \A}}{2} \mathcal{I}_{\rm NA}$ 
where 
\[
\begin{split}
\mathcal{E}_{\rm NA}^0 &:=-\int_{\tstM}(\rho_{\Omega}-\pi^*\omega_{\rm FS})^2\wedge \Omega^{[n-1]}= -(2\pi)^{n+1}\left(c_1(\tstK_{\tstM/\PP^1})^2\cdot \tstA^{[n-1]}\right) \\
\mathcal{P}^{\beta}_{\rm NA} &:= -\int_{\tstM} \Theta_{\Omega}^2 \wedge \Omega^{[n-1]}= -(2\pi)^{n+1}\left(\tstB^2\cdot \tstA^{[n-1]}\right)
\end{split} \]
compute respectively the slope of $\mathcal{E}^0(\varphi_t)$ and $\mathcal{P}^{\A}(\varphi_t)$ (see Corollary~\ref{c:slopes} and Lemma~\ref{l:asymptot})  whereas 
\[ \mathcal{I}_{\rm NA}:= \int_{\tstM}\Omega^{[n+1]}= (2\pi)^{n+1}\tstA^{[n+1]}\]
computes the slope of $\mathcal{I}(\varphi_t)$   (see e.g. \cite{Dervan-Ross, zach}). The claim follows.
\end{proof}
\begin{rmk} Theorem~\ref{E-Slope} provides an analog of the Donaldson--Futaki invariant of a K\"ahler test configuration (originally associated with the cscK problem on $(M, \Ome)$), but which is relevant to the study of solutions of \eqref{PDEAn} on $(M, \Ome, \A)$. For instance, one can check that for a product K\"ahler test configuration (i.e. the case when $M_0=M$), $\mathcal{E}_{\rm NA}$ computes, up to a non-zero multiplicative constant,  the Futaki invariant of $(M,\Ome, \A)$, introduced in \eqref{d:futaki},   with respect to the generator $X$ of the $\C^*$-action on $M_0=M$. It will be interesting to relate in general the non-negativity of $\mathcal{E}_{\rm NA}$--- a condition which would correspond to the  ``$\mathcal{E}_{\rm NA}$-semistability'' of $(M, \Ome, \A)$ on smooth/orbifold test configurations---with the existence of a solution of \eqref{PDEAn}. Such a reltion will follow directly from Theorem~\ref{E-Slope} if one shows that a solution of \eqref{PDEAn} realizes a global minimum of $\mathcal{E}$. By \cite[Thm.~1.2]{SW},  this actually holds true in the Fano case with $\Ome=2\pi c_1(M), \A=0$, assuming that $M$ admits a K\"ahler--Einstein metric $\omega_{\rm KE} \in \Ome$ (in this case $\omega_{\rm KE}$ solves \eqref{PDEAn}). In general, a major difficulty comparing to the arguments in the cscK case~\cite{BB} is that the Mabuchi functional $\mathcal{E}$ associated to \eqref{PDEAn} is not easy to define on weak
$C^{1,1}$ geodesics of $\omega_0$-relative K\"ahler potentials; additionally,  the Mabuchi functional $\mathcal{E}$ is convex only along (smooth) geodesics of K\"ahler metrics of \emph{positive scalar curvature}. 
\end{rmk}

\end{document}